\newtheorem{theorem}{Theorem}[section]
\newtheorem{prop}[theorem]{Proposition}
\newtheorem{lemma}[theorem]{Lemma}
\newtheorem{remark}[theorem]{Remark}
\newtheorem{question}[theorem]{Question}
\newtheorem{definition}[theorem]{Definition}
\newtheorem{cor}[theorem]{Corollary}
\DeclareMathOperator{\ep}{\epsilon}
\DeclareMathOperator{\C}{\mathbb C}
\begin{document}

\title{Intersection of almost complex submanifolds}

\author{Weiyi Zhang}
\address{Mathematics Institute\\  University of Warwick\\ Coventry, CV4 7AL, England}
\email{weiyi.zhang@warwick.ac.uk}

\begin{abstract} We show the intersection of a compact almost complex subvariety of dimension $4$ and a compact almost complex submanifold of codimension $2$ is a $J$-holomorphic curve. This is a generalization of positivity of intersections for $J$-holomorphic curves in almost complex $4$-manifolds to higher dimensions. As an application, we discuss pseudoholomorphic sections of a complex line bundle. We introduce a method to produce $J$-holomorphic curves using the differential geometry of almost Hermitian manifolds. When our main result is applied to  pseudoholomorphic maps, we prove the singularity subset of a pseudoholomorphic map between almost complex $4$-manifolds is $J$-holomorphic. Building on this, we show degree one pseudoholomorphic maps between almost complex $4$-manifolds are actually birational morphisms in pseudoholomorphic category.  
\end{abstract}
 \maketitle

\tableofcontents
\section{Introduction}
A pseudoholomorphic curve (to the author's knowledge, first studied in \cite{NW})  is a smooth map from a Riemann surface into an almost complex manifold $(M, J)$ that satisfies the Cauchy-Riemann equation. Gromov \cite{Gr} first introduced this notion as a fundamental tool to study symplectic manifolds. It has since revolutionized the field of symplectic topology and greatly influenced many other areas such as algebraic geometry, string theory, and $4$-manifolds. The image of a $J$-holomorphic curve is called a $J$-holomorphic $1$-subvariety (or misleadingly also called a $J$-holomorphic curve), whose complete definition will be recalled shortly. It is the analogue of a one dimensional subvariety in algebraic geometry. 

The positivity of intersection of distinct irreducible $J$-holomorphic curves is a fundamental result in the theory of $J$-holomorphic curves \cite{Gr, McD, MW}, in particular when the ambient manifold is of dimension $4$. On the other hand, the intersection theory of complex submanifolds, or more generally complex subvarieties, is a well established subject. In particular, it is a basic result that the intersection of complex submanifolds is a possibly singular complex subvariety. This is clear from the ``mapping out" viewpoint: namely, a complex submanifold could be expressed locally as the zero locus of analytic functions in terms of complex coordinates.  This leads to the  divisor-line bundle correspondence in complex geometry, where we view the sections as complex codimension one submanifolds in the total space of the line bundle. 

Most research in the theory of $J$-holomorphic curves,  like the Gromov-Witten theory, take the ``mapping into" viewpoint. On the other hand, Taubes' SW=Gr uses the ``mapping out" viewpoint with a limit process, where $J$-holomorphic curves are constructed as limits of the zero loci of solutions of Seiberg-Witten equations. The goal of our paper is to develop the ``mapping out" approach and see how it might also help to produce deeper understanding of the ``mapping into" viewpoint. 

As we have seen in the complex setting, the ``mapping out" approach is essentially the intersection theory of almost complex submanifolds. Hence, our study could also be extended to higher dimensional $J$-holomorphic subvarieties.  However, the almost complex case is much harder since we no longer have complex coordinates. 

Before we start to introduce our results, we first define $J$-holomorphic subvarieties. A $J$-holomorphic subvariety is a finite set of pairs $\{(V_i, m_i), 1\le i\le m\}$, where each $V_i$ is an irreducible $J$-holomorphic subvariety and each $m_i$ is a positive integer. Here an irreducible $J$-holomorphic subvariety is the image of a somewhere immersed pseudoholomorphic map $\phi: X\rightarrow M$ from a compact connected smooth almost complex manifold $X$. 

 We have an equivalent description of irreducible subvarieties of dimension $2$, {\it i.e.} irreducible $1$-subvarieties, as in \cite{T1}. A closed set $C\subset M$ with finite, nonzero
$2$-dimensional Hausdorff measure is said to be an irreducible $J$-holomorphic $1$-subvariety if it has no isolated points, and if the complement of a finite set
of points in $C$, called the singular points, is a connected smooth submanifold with $J$-invariant tangent space. Any irreducible $1$-subvariety is the image of a $J$-holomorphic map $\phi: \Sigma\rightarrow M$ from a compact connected curve $\Sigma$ where  $\phi$ is an embedding off a finite set. These two definitions are equivalent because any $J$-holomorphic curve $u: \Sigma\rightarrow M$ may be expressed as a composition of a holomorphic branched covering $b: \Sigma\rightarrow \Sigma'$ and a somewhere injective $J$-holomorphic map $u': \Sigma'\rightarrow M$.

The simplest situation of the intersection of almost complex submanifolds is when there is no ``excess intersection" phenomenon, see the discussion following Corollary \ref{eICdim4}. For this reason, we require one of the submanifolds to be of codimension $2$. 

\begin{question}\label{intJcurve}
Suppose $(M^{2n}, J)$ is an almost complex $2n$-dimensional manifold, and $Z_2$ is a compact connected almost complex submanifold of codimension $2$. If the intersection $Z_1\cap Z_2$ is not one of $Z_i$, is it a $J$-holomorphic subvariety of dimension $\dim_{\mathbb R} Z_1-2$?
\end{question}

The statement is apparently true if $Z_1$ and $Z_2$ intersect transversely, or the intersection is known to be a smooth manifold. It is well known that if a connected compact $J$-holomorphic curve is not contained in a connected compact codimension $2$ almost complex submanifold then their intersection is a finite set, see Lemma \ref{posint}. This is the simplest form of positivity of intersections, a phenomenon first noticed by Gromov in \cite{Gr}. In dimension $4$, the strongest form is known. Any intersection point of two distinct irreducible $J$-holomorphic subvarieties contributes positively \cite{MW, McD}.

As the next step, we are able to give an affirmative answer to Question \ref{intJcurve} when $\dim Z_1=4$. In fact, we have the following more general form. 

\begin{theorem}\label{ICdim4}
Suppose $(M^{2n}, J)$ is an almost complex $2n$-manifold, and  $Z_2$ is a codimension $2$ compact connected almost complex submanifold. Let $(M_1, J_1)$ be a compact connected almost complex $4$-manifold and $u: M_1\rightarrow M$ a pseudoholomorphic map such that $u(M_1)\nsubseteq Z_2$. Then $u^{-1}(Z_2)$ supports a $J_1$-holomorphic $1$-subvariety in $M_1$.
\end{theorem}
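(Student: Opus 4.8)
The plan is to convert $u^{-1}(Z_2)$ into the zero locus of a section of a complex line bundle over $M_1$ and then to show that such a zero locus is a $J_1$-holomorphic $1$-subvariety. First I would fix a tubular neighborhood $U$ of $Z_2$ in $M$ with smooth retraction $\pi\colon U\to Z_2$, together with a defining section $\sigma$ of $\pi^{*}\nu$, where $\nu$ is the normal bundle of $Z_2$ (a complex line bundle, since $Z_2$ is almost complex). I would choose $\sigma$ so that $Z_2=\sigma^{-1}(0)$, so that $\sigma$ is transverse to the zero section along $Z_2$, and so that the normal derivative of $\sigma$ along $Z_2$ is $J$-complex linear. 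Setting $L=u^{*}\pi^{*}\nu$ and $f=\sigma\circ u$ on $u^{-1}(U)$, we have $u^{-1}(Z_2)=f^{-1}(0)$, and because $u(M_1)\nsubseteq Z_2$ and $M_1$ is connected, $f\not\equiv 0$.

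Next I would record the equation satisfied by $f$. Since $du\circ J_1=J\circ du$, the $J_1$-antilinear part of $df=d\sigma\circ du$ equals $(\bar\partial_J\sigma)\circ du$, where $\bar\partial_J\sigma$ denotes the $J$-antilinear part of $d\sigma$. By the choice of $\sigma$, the tensor $\bar\partial_J\sigma$ vanishes identically along $Z_2=\sigma^{-1}(0)$; as it is smooth and $|\sigma|$ is comparable to the distance to $Z_2$, a Taylor estimate gives $|\bar\partial_J\sigma|\le C|\sigma|$ near $Z_2$. Compactness of $M_1$ bounds $\|du\|$, so $f$ satisfies the Cauchy--Riemann inequality $|\bar\partial_{J_1}f|\le C|f|$ on $u^{-1}(U)$. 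In particular, by Aronszajn unique continuation $f^{-1}(0)$ has empty interior, hence is a proper closed subset of $M_1$. The regular part is then immediate: at a point $x$ with $f(x)=0$ and $df_x\neq 0$ one has $\bar\partial_{J_1}f(x)=0$, so $df_x=\partial_{J_1}f(x)$ is $J_1$-complex linear and $\ker df_x$ is a $J_1$-invariant $2$-plane; thus near $x$ the zero set is a smooth submanifold with $J_1$-invariant tangent space, i.e. a $J_1$-holomorphic submanifold.

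The main work, and the step I expect to be the principal obstacle, is the analysis at the degenerate zeros, where $f$ and $df$ vanish simultaneously and $f$ may vanish to higher order. Here I would reduce the four-dimensional problem to the one-dimensional theory by foliating a neighborhood of such a point in $M_1$ by $J_1$-holomorphic disks, obtained by perturbing, via the implicit function theorem, the foliation by complex lines in a $J_1$-adapted coordinate chart. Restricting $f$ to a leaf $D$ yields $|\bar\partial(f|_D)|\le C|f|_D|$ on a Riemann surface, so by the Carleman similarity principle $f|_D$ either vanishes identically or has isolated zeros of finite positive multiplicity. The leaves on which $f|_D\equiv 0$ are exactly the $J_1$-holomorphic disks mapped by $u$ into $Z_2$; on every other leaf $f|_D$ has only isolated zeros, in agreement with Lemma \ref{posint}, with local multiplicities that are positive and upper semicontinuous in the leaf parameter.

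Assembling these slices, I would show that $f^{-1}(0)$ is a closed set of locally finite $2$-dimensional Hausdorff measure whose regular part is a dense $J_1$-holomorphic submanifold, and whose degenerate locus sits inside $f^{-1}(0)$ in real codimension at least $2$, hence is a locally finite set of points on each $2$-dimensional piece. Combined with the positive integer multiplicities furnished by the similarity principle, this exhibits $u^{-1}(Z_2)=f^{-1}(0)$ as a finite formal sum of irreducible $J_1$-holomorphic $1$-subvarieties with positive multiplicities, i.e. a $J_1$-holomorphic $1$-subvariety. The delicate points throughout are controlling the degenerate stratum---proving that it is genuinely lower-dimensional inside $f^{-1}(0)$, and that the multiplicity is well defined and constant along each irreducible piece---which is where the local foliation and the similarity principle must be combined with a monotonicity and intersection-number estimate.
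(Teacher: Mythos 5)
Your setup is a genuinely different, and legitimate, route into the problem: rather than working directly with the set $u^{-1}(Z_2)$ and its intersection numbers with disks mapped into the ambient $M$ (which is what the paper does), you pull back a defining section of the normal bundle of $Z_2$ and reduce to the zero set of a section $f$ of a complex line bundle over $M_1$ satisfying $|\bar\partial_{J_1}f|\le C|f|$. This is essentially the local model studied in \cite{BonZ} for Question \ref{Janti0holo}, and the pieces you do carry out are sound: the choice of $\sigma$ with $J$-complex-linear normal derivative, the resulting Cauchy--Riemann inequality, Aronszajn to rule out interior, the $J_1$-invariance of $\ker df$ at nondegenerate zeros, and the similarity principle on leaves of a $J_1$-holomorphic disk foliation.

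The genuine gap is the final ``assembling these slices'' step. What you have at that stage is exactly the hypotheses of Proposition \ref{pcaholo} (Taubes' Proposition 6.1 of \cite{T}): a closed set with finite $\mathcal H^2$ and positive local intersection numbers with $J_1$-holomorphic disks, i.e.\ a positive cohomology assignment. The conclusion that such a set is supported on a $J_1$-holomorphic $1$-subvariety --- rectifiability, the a.e.\ $J_1$-invariant approximate tangent plane, the decomposition into finitely many irreducible components with well-defined constant multiplicities, and the control of the degenerate stratum --- is precisely the content of that theorem, whose proof occupies a large part of Section 6 of \cite{T} and is not a routine consequence of upper semicontinuity of leafwise zero counts plus ``monotonicity.'' Your claim that the degenerate locus has codimension at least $2$ inside $f^{-1}(0)$ is asserted, not proved, and is in fact one of the hard outputs of that machinery. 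A secondary, fixable gap: to get $\mathcal H^2(f^{-1}(0))<\infty$ by the coarea argument you need to choose the foliation direction so that \emph{no} leaf is contained in $f^{-1}(0)$ (not merely that such leaves exist off a small set); the paper does this via an accumulation argument on the $\mathbb{CP}^1$ of complex tangent directions together with Proposition \ref{uniquecont}, and your sketch does not address it. With Proposition \ref{pcaholo} invoked explicitly and the foliation point repaired, your argument would close up and would parallel the paper's proof of Proposition \ref{Hd2} and Proposition \ref{icpca} in a different coordinate system.
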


Notice we do not require our almost complex structures $J$ or $J_1$ to be tamed by a symplectic form. Recall that an almost complex structure $J$ is said to be
tamed by a symplectic form $\omega$ if the bilinear form $\omega(\cdot, J\cdot)$ is positive definite. We say $J$ is tamed if we do not specify such a symplectic form albeit there exists one. An almost complex structure $J$ is compatible with $\omega$ if $J$ is tamed by $\omega$ and $\omega(v, w)=\omega(Jv, Jw)$ for any $v, w\in TM$. We also say $J$ is almost K\"ahler if we do not specify such a symplectic form. 

In the statement of Theorem \ref{ICdim4}, a set supporting a pseudoholomorphic $1$-subvariety means it is the support $|\Theta|=\cup_{(C_i, m_i)\in \Theta}C_i$ of a pseudoholomorphic $1$-subvariety $\Theta$. In fact, we are also able to determine the homology class of the $J_1$-holomorphic $1$-subvariety in Theorem \ref{ICdim4}.  The homology class $e_{\Theta}=\sum_{(C_i, m_i)\in \Theta} m_i[C_i]$ is calculated by the homology class of the submanifold of $M_1$ that is obtained in a similar manner but using a (smooth) perturbation of $u$ that is transverse to $Z_2$. 

Notice that the image of $u$ might not be an irreducible $J$-holomorphic subvariety of dimension $4$. If $u(M_1)$ is of dimension $0$, then it is a point, and $u(M_1)\cap Z_2=\emptyset=u^{-1}(Z_2)$ since $u(M_1)\nsubseteq Z_2$. If $u(M_1)$ is a $J$-holomorphic $1$-subvariety, since $u(M_1)\nsubseteq Z_2$ and $u(M_1)$ is connected, then $u(M_1)\cap Z_2$ is a collection of finitely many points possibly with multiplicities, {\it i.e.} a $0$-dimensional subvariety. If $u(M_1)$ is of dimension $4$, $u(M_1)\nsubseteq Z_2$ is the image of $J_1$-holomorphic subvarieties $u^{-1}(Z_2)$. While each irreducible component of $u^{-1}(Z_2)$ is either contracted to a point, or mapped to a $J$-holomorphic curve. 

We remark that if $n=2$, then the statement of Theorem \ref{ICdim4} still holds even if $Z_2$ is merely assumed to be a $J$-holomorphic $1$-subvariety, by virtue of the result of \cite{MW}. This is the content of Section \ref{Z2subv} and is summarized as Theorem \ref{ICdim4MW}. This generality is useful to study pseudoholomorphic maps between almost complex $4$-manifolds.

A quick corollary of Theorem \ref{ICdim4} which suffices for many applications is the following.

\begin{cor}\label{eICdim4}
Suppose $(M^{2n}, J)$ is an almost complex $2n$-dimensional manifold, and $Z_1$, $Z_2$ are compact connected almost complex submanifolds of dimension $4$ and $2n-2$ respectively. Then the intersection $Z_1\cap Z_2$ is either one of $Z_i$, or supports a $J$-holomorphic $1$-subvariety. 
\end{cor}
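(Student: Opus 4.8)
The plan is to deduce Corollary \ref{eICdim4} directly from Theorem \ref{ICdim4} by taking the pseudoholomorphic map $u$ to be the inclusion of $Z_1$. The substance of the statement already resides in Theorem \ref{ICdim4}; what remains for the corollary is a reduction together with some bookkeeping to pass between the intrinsic geometry of $Z_1$ and the ambient geometry of $M$.

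First I would record the basic setup. Since $Z_1$ is an almost complex submanifold of $(M,J)$, its tangent bundle is $J$-invariant, so $J_1 := J|_{TZ_1}$ endows $Z_1$ with the structure of a compact connected almost complex $4$-manifold, and the inclusion $\iota\colon Z_1 \hookrightarrow M$ satisfies $d\iota\circ J_1 = J\circ d\iota$, i.e. it is a pseudoholomorphic map from $(Z_1,J_1)$ to $(M,J)$. Moreover $\iota^{-1}(Z_2) = Z_1\cap Z_2$. Thus the data $(M_1,J_1)=(Z_1,J_1)$ and $u=\iota$ are exactly of the form required by Theorem \ref{ICdim4}.

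Next I would split into the two cases dictated by the hypothesis $u(M_1)\nsubseteq Z_2$. If $Z_1\subseteq Z_2$, then $Z_1\cap Z_2 = Z_1$, which is one of the $Z_i$, and there is nothing more to prove. Otherwise $u(M_1)=Z_1\nsubseteq Z_2$, so Theorem \ref{ICdim4} applies and produces a $J_1$-holomorphic $1$-subvariety $\Theta=\{(C_i,m_i)\}$ in $Z_1$ whose support $|\Theta|$ equals $u^{-1}(Z_2)=Z_1\cap Z_2$. (The remaining degenerate possibility, that the intersection equals $Z_2$, forces $Z_2\subseteq Z_1$; this can occur only when $\dim Z_2=2$, namely $n=2$, in which case $Z_2$ is itself a $J$-holomorphic curve, so the conclusion is consistent.)

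Finally I would upgrade the subvariety from $Z_1$ to $M$. Each irreducible $C_i$ is the image of a somewhere-immersed pseudoholomorphic map $\phi_i\colon X_i\to Z_1$; composing with $\iota$ gives a somewhere-immersed pseudoholomorphic map $\iota\circ\phi_i\colon X_i\to M$ with the same image $C_i\subseteq M$, so $\Theta$ is also a $J$-holomorphic $1$-subvariety in $M$ supported on $Z_1\cap Z_2$. The only point requiring care, and the closest thing to an obstacle, is precisely this last transfer: one must check that a $J_1$-holomorphic $1$-subvariety of the submanifold $Z_1$ is genuinely a $J$-holomorphic $1$-subvariety of the ambient $M$. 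This follows because $J_1$ is the restriction of $J$ and $\iota$ is a pseudoholomorphic embedding, so tangency of $1$-subvarieties and the somewhere-immersed condition are preserved; beyond this the argument is a formal consequence of Theorem \ref{ICdim4}.
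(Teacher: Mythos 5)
Your proposal is correct and takes essentially the same route as the paper, which deduces the corollary (in the refined form of Proposition \ref{closedM}) by applying Theorem \ref{ICdim4} to the inclusion $u\colon Z_1\hookrightarrow M$ after disposing of the containment case. The final transfer of the $J_1$-holomorphic $1$-subvariety from $Z_1$ to the ambient $M$ is as routine as you indicate, since $J_1=J|_{TZ_1}$ and composing the parametrizations with the pseudoholomorphic embedding $\iota$ preserves both pseudoholomorphicity and the somewhere-immersed condition.
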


In general, $Z_1$ and $Z_2$ might not intersect transversely. But, if $Z_1\cap Z_2\ne Z_1$ or $Z_2$,  they are ``dimensional transverse" in the sense that $\dim_{\mathbb R} Z_1+\dim_{\mathbb R} Z_2=\dim_{\mathbb R} M+\dim_{\mathbb R}Z_1\cap Z_2$. Our codimension $2$ assumption on $Z_2$ is used to guarantee that there are no ``excess intersection" besides the trivial case. For example, projective subspaces of complex dimensions $k$ and $l$ in $\mathbb CP^n$ could share common projective subspaces of any dimension no less than $k+l-n$. Moreover, the intersection could have irreducible components with unequal dimensions.

Let us briefly explain the idea of the proof of Theorem \ref{ICdim4}. The first step is to show $u^{-1}(Z_2)$ has finite $2$-dimensional Hausdorff measure. To establish this, we first introduce a generalization of unique continuation of pseudoholomorphic curves. This prevents $u^{-1}(Z_2)$ from being an open subset of $M_1$, thus reducing our argument to a small open neighborhood of any point in $M_1$. Then we use a dimension reduction argument. Notice a dimension $2$ and a codimension $2$ almost complex submanifolds intersect at isolated points positively. This could be used to show the $2$-dimensional Hausdorff measure of the intersection $A=u^{-1}(Z_2)$ is finite with the help of a local smooth foliation of $J_1$-holomorphic disks on $M_1$. Then the coarea formula would imply the finiteness of $2$-dimensional Hausdorff measure of $A$. This part will be done in Section \ref{H2fin}.

If in addition, roughly speaking, we know the set $A$ intersects positively with all local $J_1$-holomorphic disks, then we can show $A$ is a $J_1$-holomorphic subvariety. The basic strategy dated back to \cite{King} at least, where it works in complex analytic setting. In the pseudoholomorphic situation, this strategy was worked out by Taubes \cite{T}. He introduces the notion of ``positive cohomology assignment", which plays the role of intersection number of our set $A$ with each local open disk. If, in addition to $\mathcal H^2(A)<\infty$, $A$ has a ``positive cohomology assignment", we know $A$ is a $J_1$-holomorphic subvariety by Proposition 6.1 of \cite{T} (see Proposition \ref{pcaholo} in our paper). Then the argument is boiled down to finding a positive cohomology assignment when $A$ is considered as a subset in $4$-manifold $M_1$. The idea is, instead of using the set $A$ directly, we assign the intersection number of the image of our test disk in $M$ with the submanifold $Z_2$ in the ambient manifold $M$. This part occupies Section \ref{pca}. In Section \ref{homology}, we calculate the homology class of the $J_1$-holomorphic subvariety $A$. 

When $\dim_{\mathbb R}M_1>4$, up to the higher dimensional analogue of Proposition \ref{pcaholo} (Question \ref{pcaholoco2}), our argument still works to show the $J_1$-holomorphicity of $A$. This occupies Section \ref{dim>4}, and the result is summarized in Theorem \ref{ICdim>4}. 

In the remaining sections, we discuss the applications of Theorem \ref{ICdim4}. In Section \ref{Jsec}, we study the pseudoholomorphic sections of a complex line bundle. We study the almost complex canonical bundle $\Lambda_J^-$ in detail. This is defined as the $-1$ eigenspace of the endomorphism of $\Lambda^2T^*M$ induced by the almost complex structure $J$.  In this case, any almost Hermitian metric of the base manifold $(M, J)$ induces an almost complex structure of the total space of the complex line bundle $\Lambda_J^-$. The pseudoholomorphic sections of this bundle correspond to $J$-anti-invariant forms with certain closedness condition. Moreover,  we partially establish the divisor-line bundle correspondence in this section. The upshot of this section is to relate almost Hermitian geometry to the theory of pseudoholomorphic curves. In principle, combining with Theorem \ref{ICdim4}, the study of the differential geometry of almost Hermitian manifolds would lead to the information of the $J$-holomorphic subvarieties on the base manifold, and {\it vice versa}. 

In Section \ref{app}, we discuss more applications. In addition to those related to symplectic birational geometry which are summarized in Section \ref{symbir}, we study the pseudoholomorphic maps between almost complex manifolds. The following result studies the structure of pseudoholomorphic maps between closed almost complex $4$-manifolds.

\begin{theorem}\label{introsing4to4}
Let $u: (X, J)\rightarrow (M, J_M)$ be a somewhere immersed pseudoholomorphic map between closed connected almost complex $4$-manifolds. Then 
\begin{itemize}
\item the singularity subset of $u$ supports a $J$-holomorphic $1$-subvariety; 
\item other than finitely many points $x\in M$, where $u^{-1}(x)$ is the union of a $J$-holomorphic $1$-subvariety and finitely many points, the preimage of each point is a set of finitely many points.
\end{itemize}
\end{theorem}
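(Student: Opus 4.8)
The plan is to derive both statements from Theorem \ref{ICdim4} by realizing the relevant subsets as preimages of codimension-$2$ almost complex submanifolds. For the first bullet, I would first identify the singularity subset of $u$ with the zero locus of a single pseudoholomorphic section of a complex line bundle. Since $\dim_{\R}X=\dim_{\R}M=4$, at each $x\in X$ the differential $du_x\colon T_xX\to T_{u(x)}M$ is a $\C$-linear map between $2$-dimensional complex vector spaces, and $x$ is a singular point precisely when $du_x$ fails to be an isomorphism, that is, when the induced map $\Lambda^2_{\C}du_x\colon\Lambda^2_{\C}T_xX\to\Lambda^2_{\C}T_{u(x)}M$ vanishes. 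Thus the Jacobian $s:=\Lambda^2_{\C}du$ is a section of the line bundle $L=K_X\otimes u^*K_M^{-1}$, where $K$ denotes the canonical bundle, and its zero locus is exactly the singularity subset. Because $u$ is somewhere immersed, $s$ is not identically zero.

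The key point is then that $s$ is a \emph{pseudoholomorphic} section in the sense of Section \ref{Jsec}: equipping the total space $\mathcal L$ of $L$ with the almost complex structure $\widetilde J$ induced by $J$ together with an almost Hermitian structure, I would verify that the Cauchy--Riemann equation for $u$ forces $s\colon(X,J)\to(\mathcal L,\widetilde J)$ to be a pseudoholomorphic map. Granting this, one applies Theorem \ref{ICdim4} with $M_1=X$, $J_1=J$, ambient manifold $\mathcal L$, and $Z_2$ the zero section of $\mathcal L$, a codimension-$2$ almost complex submanifold not containing $s(X)$. The conclusion is that $s^{-1}(Z_2)$, namely the singularity subset of $u$, supports a $J$-holomorphic $1$-subvariety. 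I expect the verification that $s$ is pseudoholomorphic---translating the first-order holomorphy of $u$ into the holomorphy of the determinant section with respect to $\widetilde J$, modulo the connection and curvature terms entering $\widetilde J$---to be the main obstacle, and the natural place to invoke the machinery of Section \ref{Jsec}.

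For the second bullet, write $S$ for the singularity subset produced above, a $J$-holomorphic $1$-subvariety with finitely many irreducible components $C_1,\dots,C_k$. On $X\setminus S$ the map $u$ is an immersion between equidimensional manifolds, hence a local diffeomorphism, so its fibers there are discrete and, by compactness of $X$, finite; consequently any positive-dimensional fiber must lie inside $S$. For each component $C_i$ either $u|_{C_i}$ is nonconstant, in which case it is a nonconstant pseudoholomorphic map from a compact curve and so has finite fibers, or $u|_{C_i}$ is constant, contracting $C_i$ to a point $p_i\in M$. Only the finitely many points $p_i$ arising from contracted components can therefore admit a positive-dimensional preimage.

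Finally I would assemble the fiber description. Since $u$ is orientation preserving and somewhere a local diffeomorphism, it is surjective of positive degree, so generic fibers are finite of cardinality equal to $\deg u$. For a point $x$ not among the $p_i$, the fiber $u^{-1}(x)$ meets $S$ in at most finitely many points and lies otherwise in $X\setminus S$, hence is finite. For $x=p_i$, the preimage $u^{-1}(p_i)$ is the union of those contracted components mapping to $p_i$---a $J$-holomorphic $1$-subvariety---together with the finitely many remaining preimage points coming from $X\setminus S$ and from the nonconstant components of $S$. This yields exactly the stated dichotomy and completes the proof.
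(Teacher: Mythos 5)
Your overall strategy for the first bullet is the same as the paper's: view the complex Jacobian $\Lambda^2_{\mathbb C}du$ as a section of a complex line bundle, observe that its zero locus is exactly the singularity subset, and apply Theorem \ref{ICdim4} with $Z_2$ the zero section. But the step you defer --- producing an almost complex structure on the total space of $K_X\otimes u^*K_M^{-1}$ for which this particular section is pseudoholomorphic --- is the entire content of the argument, and the machinery of Section \ref{Jsec} does not supply it. An almost Hermitian connection $\nabla$ gives an almost complex structure $\mathcal J_{\nabla}$ for which a section is pseudoholomorphic if and only if it satisfies the first-order equation \eqref{Jholo}; whether $\Lambda^2_{\mathbb C}du$ satisfies this for some natural choice of $\nabla$ involves the second derivatives of $u$ and the torsion/Nijenhuis data of both $J$ and $J_M$, and there is no reason a connection chosen independently of $u$ will work. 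The paper sidesteps this by working in the $1$-jet space $J^1(X,M)$, identified with the total space of the bundle $\mathcal E\rightarrow X\times M$ of complex-linear maps $T_xX\rightarrow T_mM$: Gauduchon's canonical almost complex structure $\mathcal J$ on $J^1(X,M)$ is constructed precisely so that the lift $x\mapsto(x,u(x),(du_x)_{\mathbb C})$ of \emph{any} pseudoholomorphic $u$ is $(J,\mathcal J)$-holomorphic, and the same holds after passing to the fiberwise determinant bundle $\mathcal L$ (Proposition A.2.10 of \cite{Gau}). With that in hand, $\mathcal S_u=u_{\mathcal L}^{-1}(X\times M\times\{0\})$ and Theorem \ref{ICdim4} applies. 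You need either to import this jet-space construction or to carry out the verification you postponed; as written the proof of the first bullet is not complete.

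The second bullet follows the same decomposition as the paper's Lemma \ref{decX} and Proposition \ref{4to4fiber} and is essentially fine, with one small caveat: for $x\in u(\mathcal S_u)$ the set $u^{-1}(x)\cap(X\setminus\mathcal S_u)$ is discrete in the open set $X\setminus\mathcal S_u$ but could a priori accumulate on $\mathcal S_u$, so ``discrete plus compactness of $X$'' is not quite enough; the clean argument is that more than $\deg u$ such points, each admitting a neighborhood mapped diffeomorphically and orientation-preservingly onto a neighborhood of $x$, would force a nearby regular value to have more than $\deg u$ preimages.
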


The singularity subset of $u$ is where the differential $du_p$ is not of full rank. 
It is a combination of Theorem \ref{s4to4} and Proposition \ref{4to4fiber}. Every point of $X$ is a singularity if $u$ is nowhere immersed. 

A closer study at degree one pseudoholomorphic maps between almost complex $4$-manifolds shows that they are eventually birational morphisms in pseudoholomorphic category. First, Zariski's main theorem still holds for pseudoholomorphic maps (Proposition \ref{Zarmain}). Moreover, we have a very concrete description of the exceptional set. It is summarized in the following, which is a combination of Theorem \ref{phblowdown} and Corollary \ref{diffbd}.

\begin{theorem}\label{introphblowdown}
Let $u: (X, J) \rightarrow (M, J_M)$ be a degree one pseudoholomorphic map between closed connected almost complex $4$-manifolds such that $J$ is almost K\"ahler. Then there exists a subset $M_1\subset M$, consisting of finitely many points, with the following significance:
\begin{enumerate}
\item The restriction $u|_{X\setminus u^{-1}(M_1)}$ is a diffeomorphism. 
\item At each point of $M_1$, the preimage is an exceptional curve of the first kind.  
\item $X\cong M\#k\overline{\mathbb CP^2}$ diffeomorphically, where $k$ is the number of irreducible components of the $J$-holomorphic $1$-subvariety $u^{-1}(M_1)$.
\end{enumerate}
\end{theorem}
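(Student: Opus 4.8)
The plan is to pin down the exceptional locus, prove the diffeomorphism statement away from it via Zariski's main theorem, recognize each contracted fibre as an exceptional configuration, and then obtain the diffeomorphism type by induction on the number of exceptional components. First I would locate the set $M_1$. By Theorem \ref{introsing4to4} the singularity subset $E\subset X$, where $du$ fails to have full rank, supports a $J$-holomorphic $1$-subvariety, and there are only finitely many points of $M$ whose fibre contains a curve; let $M_1$ be this finite set. Since $\dim_{\mathbb R}X=\dim_{\mathbb R}M=4$, off $E$ the map $u$ is an immersion and hence a local diffeomorphism, while Theorem \ref{introsing4to4} shows the fibres over $M\setminus M_1$ are finite; as $\deg u=1$ such a fibre is a single point, so $u$ is generically injective. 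Proposition \ref{Zarmain} (Zariski's main theorem) then promotes this to the conclusion that $u$ restricts to a diffeomorphism of $X\setminus u^{-1}(M_1)$ onto $M\setminus M_1$ and that each fibre $C_p:=u^{-1}(p)$ with $p\in M_1$ is connected. This is conclusion (1), and it identifies $E=u^{-1}(M_1)$ with the union of the contracted $J$-holomorphic curves.

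Next I would study the local model near a fixed $p\in M_1$. Let $B_p\subset M$ be a small ball about $p$ and put $N_p:=u^{-1}(\overline{B_p})$, a compact $4$-manifold that deformation retracts onto $C_p$. By the previous step $u$ maps $N_p\setminus C_p$ diffeomorphically onto $\overline{B_p}\setminus\{p\}$, so $\partial N_p\cong S^3$ and collapsing $C_p$ returns the ball $\overline{B_p}$. Each irreducible component of $C_p$ is a $J$-holomorphic sphere: this follows from the adjunction formula for $J$-holomorphic curves in the almost K\"ahler $4$-manifold $(X,J)$ together with positivity of intersections (Corollary \ref{eICdim4}), which rules out positive genus for a curve of non-positive self-intersection. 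Moreover the tubular neighbourhood of such a sphere has boundary the lens space determined by its self-intersection number, and the requirement $\partial N_p\cong S^3$ forces the terminal components to have self-intersection $-1$; the plumbing of $C_p$ is therefore that of an iterated blow-up, that is, $C_p$ is an exceptional curve of the first kind. This is conclusion (2).

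Finally I would prove conclusion (3) by induction on the number $k$ of irreducible components of $u^{-1}(M_1)$. When $k=0$ the map $u$ is a diffeomorphism and $X\cong M$. When $k\ge 1$, the negative-definite, smoothly contractible configuration $E$ contains an embedded $J$-holomorphic $(-1)$-sphere $\Sigma$; since $J$ is almost K\"ahler we may perform the pseudoholomorphic blow-down of $\Sigma$, obtaining an almost complex $4$-manifold $(X',J')$ with $X\cong X'\#\overline{\mathbb CP^2}$ together with an induced degree one pseudoholomorphic map $u'\colon X'\to M$ whose exceptional locus has $k-1$ irreducible components. The inductive hypothesis gives $X'\cong M\#(k-1)\overline{\mathbb CP^2}$, whence $X\cong M\#k\overline{\mathbb CP^2}$.

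The main obstacle is the local step of the second paragraph: establishing that each contracted $J$-holomorphic curve is genuinely an exceptional curve of the first kind. In the integrable case this is Castelnuovo's contractibility criterion, which is unavailable here because we have no complex coordinates. Instead one must combine the adjunction formula (this is where the almost K\"ahler hypothesis enters, both to define $c_1$ and to control the genus through areas and energy), positivity of intersections in dimension $4$ (Corollary \ref{eICdim4}), and the topological constraint $\partial N_p\cong S^3$, in order to pin down the self-intersection numbers and to guarantee that at each stage of the induction a $(-1)$-sphere is available for a pseudoholomorphic blow-down.
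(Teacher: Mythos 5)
Your outline of (1) is the paper's route (Theorem \ref{s4to4} plus Proposition \ref{Zarmain}), and your induction for (3) is consistent with Lemma \ref{excbd} and Corollary \ref{diffbd}. The genuine gap is in step (2), and it sits exactly where the paper has to work hardest. You assert that each component of $C_p$ is rational ``by the adjunction formula\dots which rules out positive genus for a curve of non-positive self-intersection''; adjunction does no such thing, since $g(C)=1+\tfrac12(C^2+K\cdot C)$ and nothing controls $K\cdot C$ here. The paper instead proves rationality and absence of cycles purely topologically (Proposition \ref{rcpre}): the local model $u^{-1}(B_\delta)\setminus C_m\cong B_\delta\setminus\{m\}$ forces the boundary fundamental group of the configuration to be trivial, and a Poincar\'e duality argument then gives $H^1(C_m')=0$. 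More seriously, your claim that ``$\partial N_p\cong S^3$ forces the terminal components to have self-intersection $-1$'' and hence that the plumbing is an iterated blow-up is not a proof: a single $+1$-sphere also has $S^3$ plumbing boundary, and the classification you are implicitly invoking (Shastri, via Proposition 4.4 of \cite{LM}) requires as input that the intersection matrix $Q_{C_m}$ is \emph{negative definite}. You use the phrase ``negative-definite configuration'' in your third paragraph but never establish it, and it does not follow from degree one alone (the kernel of $u_*$ on $H_2$ need not carry a negative definite form without further input).

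Establishing this Grauert-type negative definiteness is precisely where the almost K\"ahler hypothesis enters --- not, as you suggest, to run adjunction. The paper's argument is a McCarthy--Wolfson symplectic gluing: one caps the $J_M$-convex ball around $m$ to a symplectic $\mathbb{CP}^2$, glues it to $(X,\omega)$ along the contact $S^3=u^{-1}(\partial\mathcal N_m)$, and invokes McDuff's theorem to conclude the resulting symplectic manifold is rational, hence has $b^+=1$; since $C_m$ is disjoint from the $+1$-sphere coming from the cap, $Q_{C_m}$ must be negative definite. Only then do the trivial boundary fundamental group and Proposition 4.4 of \cite{LM} combine to show $C_m$ is an exceptional curve of the first kind. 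Without supplying this (or an equivalent) argument, your proof of (2) --- and hence of (3), whose induction presupposes the existence of a $(-1)$-sphere in the configuration --- does not close.
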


Roughly, a connected $J$-holomorphic $1$-subvariety is called an exceptional curve of the first kind if its configuration is equivalent to the empty set through topological blowdowns. See Definition \ref{exc1st}. In particular, it is a connected $J$-holomorphic $1$-subvariety whose irreducible components are rational curves and the dual graph is a tree.

To show the second part, we first establish Grauert's criterion for exceptional set, {\it i.e.} the intersection matrix of the irreducible components of the exceptional set is negative definite. This is the reason that the almost K\"ahler condition  is added in the statement. With this assumption, we are able to embed a neighborhood of the exceptional set into a rational surface, which in particular has $b^+=1$. However, we believe this assumption should be removable.

\subsection{Philosophy and some further directions}
The philosophy of the paper is 

\bigskip

{\it a statement for smooth maps between smooth manifolds in terms of  R. Thom's transversality should also have its counterpart in pseudoholomorphic setting without requiring the transversality or genericity, but using the notion of pseudoholomorphic subvarieties,}

\bigskip

\noindent in particular when such a statement is available in complex analytic setting. Our paper explores a few, among many more, such directions guided by this philosophy.

For instance, the corresponding statement of Corollary \ref{eICdim4} in smooth category is Thom's transversality theorem: If two submanifolds intersect transversely, then the intersection is a smooth manifold. Its complex counterpart is the intersection theory of analytic cycles. In all the three ({\it i.e.} smooth, holomorphic, and pseudoholomorphic) categories, this is the cornerstone of all the later discussions. 

When it is applied to sections of an oriented vector bundle over an oriented manifold, we know the zero locus of a transverse section is a submanifold of the base whose homology class is Poincar\'e dual to the Euler class of the vector bundle. For a holomorphic line bundle, the zero divisor of a holomorphic section is a divisor in the first Chern class of the line bundle. This motivates the discussion in Section \ref{Jsec}. 

Our philosophy also leads to the following variant of pseudoholomorphic sections of the canonical line bundle. It is known that for a generic Riemannian metric on a $4$-manifold, a self-dual harmonic $2$-form, {\it i.e.} a section of the bundle $\Lambda_g^+$ which is closed as a $2$-form, is symplectic off a disjoint union of embedded circles, with the latter being the vanishing locus of the form. The almost complex version of this is the following question in \cite{DLZiccm}. 

\begin{question}\label{Janti0holo}
For an almost complex structure $J$ on a $4$-manifold, is a $J$-anti-invariant closed $2$-form ({\it i.e.} a section of the complex line bundle $\Lambda_J^-$ which is closed as a $2$-form) almost K\"ahler off a $J$-holomorphic $1$-subvariety? Equivalently, is the zero locus of a $J$-anti-invariant closed $2$-form a $J$-holomorphic $1$-subvariety?
\end{question}

 We remark that for any almost Hermitian metric $g$, the bundle $\Lambda_J^-$ is a rank $2$ subbundle of the rank $3$ real vector bundle $\Lambda_g^+$. Apparently, the answer is yes when $J$ is integrable. This question is answered affirmatively for an arbitrary almost complex structure in \cite{BonZ}.

Our Theorem \ref{introsing4to4} has two parts. The first part is a finer version of Sard's theorem, which says the set of critical values has measure zero. The second part finds its counterpart in Thom's and Boardman's fundamental work on singularities of differentiable maps \cite{Th, Boa}. It says for generic smooth maps between smooth manifolds, the singularity subsets with given degeneracy data are  submanifolds of the domain. The complex analytic version of Theorem \ref{introphblowdown} stated for equidimensional map is the fact that a birational morphism is a composition of a sequence of blowing down along exceptional curves.

Most of our statements are for $1$-subvarieties. The main reason is that the properties of pseudoholomorphic $1$-subvarieties are well studied. For higher dimensional subvarieties, even the definition is not systematically explored. Interestingly, some techniques used in this paper are also powerful in higher dimensions. We will explore it in a sequel, albeit one might already find some of such analysis in Section \ref{app}. It is an interesting iteration that would reward us with generalizing our results to higher dimensions once higher dimensional pseudoholomorphic  subvarieties are studied. 

Another interesting and peculiar phenomenon missing in the transversality setting is the excess intersection of (pseudo)holomorphic submanifolds as we have mentioned. It would have many interesting applications, for example, to the structures of pseudoholomorphic maps $u: (X^{2k}, J_X)\rightarrow (M^{2n}, J_M)$ with $k<n$ by applying the same strategy of Theorem \ref{s4to4}.

Since Thom's transversality is such a powerful tool which is applied to vast areas, we would expect many other pseudoholomorphic counterparts guided by our philosophy. A notable direction is the pseudoholomorphic version of the Thom-Mather topological stability theorem. 

\subsection{Acknowledgements}
This work is partially supported by EPSRC grant EP/N002601/1. We would like to thank Tedi Draghici, Cheuk Yu Mak and Mario Micallef for helpful discussions, Louis Bonthrone for careful reading and the referee for many nice suggestions to improve the presentation of this paper.

\section{Finite $2$-dimensional Hausdorff measure in dimension $4$}\label{H2fin}
We assume $u: (M_1, J_1)\rightarrow (M, J)$ is a $(J_1, J)$-holomorphic map and $A=u^{-1}(Z_2)$. We denote the image $Z_1:=u(M_1)$. It needs not be embedded. In this section, we will show that the Hausdorff $\dim_HA=2$ and the Hausdorff measure $\mathcal H^2(A)$ is finite when $\dim M_1=4$. This is the first step towards the proof of Theorem \ref{ICdim4}. We begin with a couple of preparatory lemmas, which work for any dimension.

\begin{lemma}\label{compact}
Let $u: M_1\rightarrow M$ be a continuous map with $M_1$ compact, $Z_2$ a closed subset of a (not necessarily compact) manifold $M$. Then $A=u^{-1}(Z_2)\subset M_1$ is a compact set.
\end{lemma}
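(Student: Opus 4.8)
The plan is to prove this entirely by point-set topology; none of the almost complex or pseudoholomorphic structure enters, and continuity of $u$ together with compactness of $M_1$ is all that is needed. The strategy rests on two elementary facts: the preimage of a closed set under a continuous map is closed, and a closed subset of a compact space is itself compact.

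First I would show that $A$ is closed in $M_1$. Since $Z_2$ is closed in $M$, its complement $M \setminus Z_2$ is open. By continuity of $u$, the preimage $u^{-1}(M \setminus Z_2)$ is open in $M_1$. But $u^{-1}(M \setminus Z_2) = M_1 \setminus u^{-1}(Z_2) = M_1 \setminus A$, so $M_1 \setminus A$ is open and hence $A$ is closed in $M_1$.

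Next I would invoke compactness of the ambient space. Given any open cover $\{U_\alpha\}$ of $A$ by open subsets of $M_1$, the collection $\{U_\alpha\} \cup \{M_1 \setminus A\}$ is an open cover of $M_1$, using that $M_1 \setminus A$ is open by the previous step. Since $M_1$ is compact, this cover admits a finite subcover; discarding $M_1 \setminus A$ from it, if present, leaves a finite subcollection of the $U_\alpha$ that still covers $A$. Hence $A$ is compact. (Equivalently, one simply cites that a closed subset of a compact topological space is compact.)

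The main point to watch is not any genuine obstacle but merely that no hypothesis on $M$ beyond $Z_2$ being closed is required: in particular $M$ need not be compact, which is consistent with the stated generality. I expect this lemma to serve only as a foundational preliminary, guaranteeing that the set $A = u^{-1}(Z_2)$ is compact so that later measure-theoretic and covering arguments on $A$ may proceed.
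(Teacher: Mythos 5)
Your proof is correct and follows exactly the same route as the paper's: $A$ is closed in $M_1$ because it is the preimage of a closed set under a continuous map, and a closed subset of the compact space $M_1$ is compact. The paper simply cites these two facts without spelling out the open-cover argument, so your version is just a more detailed rendering of the identical proof.
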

\begin{proof}
Since $Z_2$ is a closed subset in $M$, we know the preimage $A$ is closed in $M_1$. Since a closed subset of a compact space is compact, we know $A$ is a compact set. 
\end{proof}

The next lemma is the easiest case of positivity of intersections, a phenomenon first noticed by Gromov in \cite{Gr}. Our statement is adapted from Exercise 2.6.1 of \cite{MS}. For a proof, see \cite{Wen}.

\begin{lemma}\label{posint}
Suppose that $Q$ is a compact codimension two $J$-holomorphic submanifold of the almost complex manifold $(M, J)$, and let $u: D\rightarrow (M, J)$ be a $J$-holomorphic curve such that $u(0)\in Q$. 
\begin{enumerate}
\item Then the inverse image of the intersection points $u^{-1}(Q)$ is isolated in $D$ except in the case when $u(D)\subset Q$. 
\item Suppose $u(D)\nsubseteq Q$. Shrinking $D$, if necessary, we may assume that $u(0)$ is the unique point where $u(D)$ meets $Q$. Define the local intersection number $u\cdot Q$ of $u$ with $Q$ to be the number of points of intersection of a generic smooth perturbation of $u$ relative to the boundary $\partial D$. Then $u\cdot Q\ge 1$ with equality if and only if $u$ is transverse to $Q$ at zero. 
\end{enumerate}
\end{lemma}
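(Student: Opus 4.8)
The plan is to prove Lemma \ref{posint} by reducing to a local model at the intersection point and then invoking the standard local positivity result for pseudoholomorphic curves against a codimension-two almost complex submanifold. First I would set up local coordinates centered at $p=u(0)\in Q$ so that $Q$ becomes, after a diffeomorphism, the linear subspace $\{z_n=0\}$ in $\C^n$, and arrange that at $p$ the almost complex structure $J$ agrees with the standard structure $J_0$ and that $Q$ is $J$-holomorphic (so $T_pQ$ is $J$-invariant and the induced structure on the normal bundle makes sense). The key reduction is to regard the composition $\pi\circ u$, where $\pi$ is the projection to the normal slice (roughly the $z_n$-coordinate), as giving a local model for the intersection: the zeros of this normal component record exactly the points of $u^{-1}(Q)$.

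For part (1), the core fact I would use is the similarity principle (the Carleman-type unique continuation for solutions of the perturbed Cauchy-Riemann equation). Writing the $J$-holomorphic condition in these coordinates, the normal component $f=z_n\circ u$ satisfies a linear Cauchy-Riemann-type equation $\bar\partial f + a\, f + b\,\bar f = 0$ with bounded coefficients $a,b$, because $Q$ being $J$-holomorphic forces the normal projection of the nonlinear term to vanish along $Q$ to first order. By the similarity principle, $f$ factors locally as $f=\sigma\cdot g$ where $g$ is a genuine holomorphic function and $\sigma$ is a nonvanishing continuous factor. Hence $f$ either vanishes identically on a neighborhood (the case $u(D)\subset Q$, eliminated by analytic continuation on the connected domain $D$) or has isolated zeros of finite order, which gives the isolatedness claim.

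For part (2), after shrinking $D$ so that $u(0)$ is the only intersection, the local intersection number $u\cdot Q$ is defined as the number of transverse zeros of a generic perturbation rel $\partial D$; topologically this equals the degree of the map $\partial D \to \C^*$ given by the normal component $f|_{\partial D}$, i.e. the winding number of $f$ around $0$. From the factorization $f=\sigma\cdot g$ with $g$ holomorphic and $\sigma\neq 0$, the winding number equals the vanishing order $k\ge 1$ of the holomorphic factor $g$ at the origin. Since $k\ge 1$ always, we get $u\cdot Q\ge 1$, and $k=1$ precisely when $df|_0\neq 0$ in the holomorphic sense, which is exactly the condition that $u$ is transverse to $Q$ at zero; this yields the equality case.

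The main obstacle I anticipate is justifying the reduction to the clean linear Cauchy-Riemann equation for the normal component, i.e. verifying that one can choose coordinates in which $J=J_0$ at $p$ \emph{and} $Q=\{z_n=0\}$ is $J$-holomorphic simultaneously, so that the nonlinear perturbation terms are genuinely of the right form (bounded coefficients, with the $Q$-preserving structure ensuring no zeroth-order obstruction). Once the equation is in the form to which the similarity principle applies, everything else is standard complex analysis (winding numbers of holomorphic factors). Since the excerpt explicitly attributes the statement to Exercise 2.6.1 of \cite{MS} with a proof in \cite{Wen}, I would present the argument at this level of detail and cite those sources for the technical similarity-principle estimates rather than reproving them.
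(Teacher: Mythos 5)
The paper does not prove this lemma itself; it explicitly defers to Exercise 2.6.1 of \cite{MS} and the proof in \cite{Wen}, and your argument is a correct reconstruction of exactly that standard proof: local coordinates flattening $Q$ to $\{z_n=0\}$, the observation that $J$-holomorphicity of $Q$ makes the normal component $f=z_n\circ u$ satisfy a linear Cauchy--Riemann-type equation with no inhomogeneous term, the similarity principle to factor $f=\sigma g$ with $g$ holomorphic, and the winding-number computation for the local intersection index. Your approach therefore coincides with the one the paper relies on, and your identification of the coordinate set-up as the one genuinely delicate point is accurate.
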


The first part of the lemma could be extended to a generalization of unique continuation of $J$-holomorphic submanifolds.

\begin{prop}\label{uniquecont}
Let $Y$ be a compact $J$-holomorphic submanifold of the almost complex manifold $(M, J)$, $u: (X, J_1)\rightarrow (M, J)$ a $(J_1, J)$-holomorphic map with $X$ connected and compact. If  $Y$ contains the image of an open subset of $X$, then $u(X)\subset Y$.
\end{prop}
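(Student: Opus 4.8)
The plan is to run an open--closed (connectedness) argument on $X$. Set $S=\{x\in X:\ u \text{ maps some open neighborhood of } x \text{ into } Y\}$, the interior of $A:=u^{-1}(Y)$. By construction $S$ is open, and it is nonempty since by hypothesis $u$ carries an open subset of $X$ into $Y$. As $X$ is connected, it suffices to prove that $S$ is closed; then $S=X$ and $u(X)\subset Y$. So fix $x_\infty\in\overline S$. Because $Y$ is compact (hence closed) and $u$ is continuous, $u(x_\infty)=\lim u(x_n)\in Y$ for any $S\ni x_n\to x_\infty$; write $p:=u(x_\infty)\in Y$. The whole issue is to upgrade this to the statement that $u$ maps an \emph{entire} neighborhood of $x_\infty$ into $Y$, i.e. $x_\infty\in S$.

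To this end I would work locally near $p$ and measure how far $u$ is from landing in $Y$. Choose a tubular neighborhood $\mathcal U$ of $Y$ around $p$ with normal projection $\rho:\mathcal U\to NY$ onto the (complex, i.e. $J$-invariant) normal bundle, so that $Y\cap\mathcal U=\rho^{-1}(0)$; one may arrange $\rho$ to be $J$-linear along $Y$ to first order. On a neighborhood $\mathcal V\subset X$ of $x_\infty$ put $w:=\rho\circ u$, a section-valued map vanishing exactly on $A\cap\mathcal V$, and in particular vanishing on the nonempty open set $S\cap\mathcal V$. Because both $u$ and $Y$ are $J$-holomorphic, the ``normal component'' $w$ satisfies a Cauchy--Riemann-type differential inequality: its $J$-antilinear part is controlled linearly by $w$ itself, in the sense $|\bar\partial_{(J_1,J)}w|\le C\,|w|$ on $\mathcal V$, the point being that where $w=0$ the map $u$ is tangent to $Y$ and the linearized equation is the honest $\bar\partial$-operator on the normal bundle. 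This is exactly the mechanism behind Lemma \ref{posint}(1), now carried out in the normal directions rather than for a single coordinate.

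Given this inequality I would deduce $w\equiv0$ on $\mathcal V$ by unique continuation, which yields $x_\infty\in S$ and closes the argument. In the codimension-two case $Y$ is cut out by one normal coordinate and $w$ is (essentially) a complex-valued function; restricting $w$ to any $J_1$-holomorphic disk $\phi:D\to\mathcal V$ gives $w\circ\phi$ satisfying a one-variable CR inequality, so by the similarity principle (equivalently, the argument proving Lemma \ref{posint}(1)) $w\circ\phi$ either vanishes identically or has only isolated zeros. I would then sweep $\mathcal V$ by the $J_1$-holomorphic disks through the various directions at points of $\partial S$: a disk meeting the open set $S\cap\mathcal V$ meets it in an open subset, hence $w$ vanishes along that entire disk, and letting the disk direction vary over all complex lines at a boundary point of $S$ fills a full neighborhood, forcing that point into $S$. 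For arbitrary codimension the same conclusion follows more cleanly by upgrading $|\bar\partial w|\le C|w|$ to the second-order inequality $|\Delta w|\le C(|w|+|\nabla w|)$ and invoking Aronszajn's unique continuation theorem, which propagates the vanishing of $w$ from the open set $S\cap\mathcal V$ to all of the connected set $\mathcal V$.

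The main obstacle is precisely this propagation in the directions \emph{transverse} to a single disk: the restriction-to-disks argument is one-dimensional and by itself only shows $w$ vanishes along those leaves that already happen to meet $S$, so one must either vary the disk over all complex directions (and verify that these sweep out a neighborhood of a boundary point of $S$) or pass to the genuinely higher-dimensional unique continuation statement for the elliptic system satisfied by $w$. A secondary technical point to get right is the construction of the normal projection $\rho$ and the verification that the $J$-holomorphicity of $u$ together with the $J$-invariance of $TY$ really does produce the inequality $|\bar\partial w|\le C|w|$ with no inhomogeneous term; this is where the hypothesis that $Y$ is a $J$-holomorphic submanifold (not merely a smooth one) is essential.
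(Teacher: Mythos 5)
Your Aronszajn route is sound in outline and genuinely different from the paper's argument: the paper never writes down the normal component $w$ or a differential inequality for it, but instead reduces everything to one-dimensional unique continuation along a family of local $J_1$-holomorphic disks. Writing $A$ for the interior of $u^{-1}(Y)$ and $B$ for the open complement of $u^{-1}(Y)$, the paper picks $x\in\partial A\cap\partial B$, first produces a pencil of $J_1$-holomorphic disks through $x$ in all complex directions (Lemma 6.1 of \cite{Ush}; Lemma 6.6 of \cite{T} in dimension $4$), whose image covers a neighborhood of $x$ and therefore contains at least one disk meeting $A$, and then builds a \emph{parallel foliation} of a neighborhood of $x$ by $J_1$-holomorphic disks (Lemma \ref{diskfol}) having that disk as central leaf. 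Any leaf meeting the open set $A$ lies entirely in $u^{-1}(Y)$ by unique continuation for curves, and since meeting $A$ is an open condition on the leaf, all leaves near the central one do so; their union is an open neighborhood of $x$ inside $u^{-1}(Y)$, contradicting $x\in\partial B$. The parallel foliation is precisely what resolves the point you flag as ``the main obstacle'': in your version, where all disks pass through the boundary point $x_\infty$, there is no reason for \emph{every} disk of the pencil to meet $S$, so the similarity principle only kills $w$ on the (possibly proper) open subfamily of directions whose disks do meet $S$, and ``verifying that the disks sweep out a neighborhood'' does not close the gap --- they do sweep out a neighborhood, but vanishing fails to propagate to the leaves that miss $S$. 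A quick repair inside your framework: recenter the pencil at a point $x'\in S$ close to $x_\infty$; then every disk through $x'$ meets the open set $S$, hence lies in $u^{-1}(Y)$, and for $x'$ close enough the pencil covers a full neighborhood of $x_\infty$.

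As for the second-order route, the one technical point to nail down is the one you already identify: choosing $NY$ to be $J$-invariant, the off-diagonal blocks of $J$ in adapted coordinates vanish on $\{w=0\}$ and hence equal smooth matrices times $w$, giving $\bar\partial w = A\cdot w$ and therefore $|\Delta w|\le C\left(|w|+|\nabla w|\right)$; Aronszajn then propagates $w\equiv 0$ from $S\cap\mathcal V$ to the connected neighborhood $\mathcal V$, and the open--closed argument finishes. This is a legitimate, arguably cleaner, alternative to the paper's dimension-reduction, at the cost of redoing in arbitrary codimension the local elliptic analysis that the paper only ever invokes for curves.
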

\begin{proof}
If $X$ is a $J$-holomorphic curve, it is the standard unique continuation result of pseudoholomorphic curves which could be obtained by Carleman similarity principle, Aronszajn's result or Hartman-Wintner theorem \cite{MS, Wen}. 

When $\dim_{\mathbb R}X\ge 4$, the proof could be reduced to the pseudoholomorphic curve situation. We assume $A\subset X$ is the maximal open subset contained in $u^{-1}(Y)$. The points of $A$ could be characterized as follows: a point $x\in A$ if and only if there is an open neighborhood $\mathcal N(x)$ of it in $X$ such that $\mathcal N(x)$ is also contained in $u^{-1}(Y)$. 

If $A\ne X$, since $u^{-1}(Y)$ is compact, the complement of $u^{-1}(Y)$ in $X$ is also an open subset of $X$. We call it $B$. By definition, $A\cup \partial A\cup B=X$. Moreover, the set $\partial A=u^{-1}(Y)\setminus A$ is nowhere dense. We choose a point $x\in \partial A\cap \partial B$. There is such an $x$ if $B\ne \emptyset$. 
 
 Now, by Lemma 6.1 in \cite{Ush} (when $\dim X=4$, it follows from Lemma 6.6 of \cite{T}), there exists an open neighborhood $\mathcal N_x'\subset X$ of $x$, such that we have a smooth map $f_0: D\times U\rightarrow \mathbb \mathcal N_x$ with $f_0(0, U)=x$. Here $D\subset \mathbb C$ is a unit disk and $U\subset \mathbb C^{n-1}$ is a unit ball of radius $1$. 
For each  $\kappa \in U$, the map $f_0|_{D \times \kappa}$ is an embedding whose image is a $J_1$-holomorphic disk. Moreover,  $f_0|_{(D\setminus \{0\})\times U}$ is a diffeomorphism onto its image. We can thus choose some $\kappa$ such that $f_0|_{D \times \kappa}\cap A\ne \emptyset$. 
 
By Lemma \ref{diskfol} (when $\dim X=4$, it follows from Lemma 5.4 of \cite{T}), there exists an open neighborhood $\mathcal N_x\subset X$ of $x$, such that we have a smooth map $f: D\times U\rightarrow \mathbb \mathcal N_x$ with $f(0, 0)=x$. For each  $w\in U$, the map $f|_{D \times w}$ is an embedding whose image is a $J_1$-holomorphic disk. We can choose $f$ such that $f(D\times 0)=f_0(D\times \kappa)$.  Moreover,  $f$ is a diffeomorphism onto its image.

For any $w\in U$, when $B\neq \emptyset$, we claim the disk $f(D \times w)$ cannot contain nontrivial open subsets from two of the three disjoint sets: $A$, $B$ and $\partial A$. First, if $f(D \times w)$ intersects both $A$ and $B$, since $A$ and $B$ are both open, the intersections $f(D \times w)\cap B$ and $f(D \times w)\cap A$ are open subsets of $f(D \times w)$. Since $(f(D \times w)\cap B)\cap u^{-1}(Y)=\emptyset$, it implies $u(f(D \times w))\nsubseteq Y$. However, on the other hand, $f(D \times w)$ is a $J_1$-holomorphic curve which has an open subset $f(D \times w)\cap A$ whose image under $u$ is contained in $Y$. Hence, by unique continuation of $J$-holomorphic curves, $u(f(D \times w))\subset Y$. This contradiction implies $B=\emptyset$ and hence $A=X$.

Furthermore, if the disk $f(D \times w)$ contains a nontrivial open subset $\partial A\cap f(D \times w)$, we will have $w'$ and $w''$ arbitrarily close to $w$ such that $f(D \times w')$ and $f(D \times w'')$ intersect $A$ and $B$ respectively. Hence,  $f(D \times w)$ cannot contain any point from $A$ or $B$. This completes our claim in last paragraph. We will call the disk $f(D \times w)$ with $f(D \times w)\cap A\ne \emptyset$ an $A$-type disk. 

Now, since $f(D\times 0)$ is an $A$-type disk, there is an $\ep>0$, such that when $|w|<\ep$, we know all the disks $f(D \times w)$ are of $A$-type. Hence, the image $f({D\times \{w: |w|<\ep\}})$ is an open subset of $X$ contained in  $u^{-1}(Y)$. It implies $x$ is not an accumulation point of $B$ which contradicts to our choice of $x$. 

Hence $B=\emptyset$ and thus $u(X)\subset Y$. 
\end{proof}

Now we assume $\dim M_1=4$. We would study the intersections of $A=u^{-1}(Z_2)$ with $J_1$-holomorphic disks by Lemma \ref{posint}. The plan requires a $2$-dimensional family of such disks. These disks could be constructed from perturbations of $J_0$-holomorphic ones where $J_0$ is the standard complex structure of $\mathbb C^2$. Such a construction is worked out in section 5(d) of \cite{T}.

To begin, fix a point $x\in M_1$, we find an open neighborhood of $x$ such that $J_1$ is compatible with a non-degenerate $2$-form $\Omega$ in this neighborhood. The pair $(\Omega, J_1)$ induces an almost Hermitian metric. Then we choose Gaussian normal coordinates centered at $x$ which identify a geodesic ball about $x$ with a ball in $\mathbb R^4$ and take $x$ to the origin. We identify $\mathbb R^4=\mathbb C^2$ such that $\Omega|_x=\omega_0=dx^1\wedge dx^2+dx^3\wedge dx^4=\frac{i}{2}(dw_0\wedge d\bar{w}_0+dw_1\wedge d\bar{w}_1)$. Here we use $(w_0, w_1)$ for the complex coordinates on $\mathbb C^2$. Hence we will simply say the almost complex structure $J_1$ is on $\mathbb C^2$. It agrees with the standard almost complex structure $J_0$ at the origin, but typically nowhere else.

Denote a family of holomorphic disks $D_w:=\{(\xi, w)| |\xi|<\rho\}$, where $w\in D$. What we get from \cite{T}, mainly Lemma 5.4 (see Lemma \ref{diskfol} for a higher dimensional generalization using the same argument), is a diffeomorphism $f: D\times D \rightarrow \mathbb C^2$, where $D\subset \mathbb C$ is the disk of radius $\rho$, such that: 
\begin{itemize}
 \item For all $w\in D$, $f(D_w)$ is a $J_1$-holomorphic submanifold containing $(0, w)$. 
 \item For all $w\in D$, dist$((\xi, w); f(\xi, w))\le z\cdot \rho\cdot |\xi|$. Here $z$ depends only on $\Omega$ and $J_1$.
\item For all $w\in D$, the derivatives of order $m$ of $f$ are bounded by $z_m\cdot \rho$, where $z_m$ depends only on $\Omega$ and $J_1$.
\end{itemize}

We call such a diffeomorphism $J$-fiber-diffeomorphism. We have freedom to choose the ``direction" of these disks by rotating the Gaussian coordinate system. Each $J_1$-holomorphic disk can be chosen to be close to any complex affine planes foliation of $\mathbb C^2$ with direction $(a, b)$, at least near the complex line (passing through the origin) with direction $(b, -a)$.

\begin{prop}\label{Hd2}
Suppose $(M^{2n}, J)$ is an almost complex $2n$-dimensional manifold, and  $Z_2$ is a codimension $2$ compact connected almost complex submanifold. Let $(M_1, J_1)$ be a compact connected almost complex $4$-manifold and $u: M_1\rightarrow M$ a pseudoholomorphic map such that $u(M_1)\nsubseteq Z_2$. Then $A=u^{-1}(Z_2)$ has Hausdorff dimension $\dim_HA=2$ and the Hausdorff measure of $A$, $\mathcal H^2(A)$, is finite. 
\end{prop}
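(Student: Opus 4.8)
The plan is to prove finiteness of $\mathcal H^2(A)$ by combining the positivity of intersections from Lemma \ref{posint} with the coarea formula, using the $J$-fiber-diffeomorphisms as a measuring device. First I would observe that Proposition \ref{uniquecont} already rules out the degenerate case: since $u(M_1)\nsubseteq Z_2$, the set $A$ cannot contain any open subset of $M_1$, for otherwise $u$ would map an open set into the submanifold $Z_2$ and unique continuation would force $u(M_1)\subseteq Z_2$, a contradiction. Hence $A$ is a closed (compact, by Lemma \ref{compact}), nowhere dense subset of $M_1$, and its Hausdorff dimension is at most $4$. To show $\dim_H A=2$ with finite $2$-dimensional measure, the strategy is purely local: by compactness it suffices to produce, around each point $x\in M_1$, an open neighborhood in which $\mathcal H^2(A)$ is finite, and then cover $A$ by finitely many such neighborhoods.

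Next I would set up the local counting on a single fixed neighborhood, using the coordinates and the $J$-fiber-diffeomorphism $f: D\times D\to \mathbb C^2$ described just before the statement. The key point is that each fiber disk $f(D_w)$ is a $J_1$-holomorphic disk, and $u$ restricted to such a disk is a $J_1$-to-$J$ holomorphic curve landing in $M$; its intersection with $Z_2$ is governed by Lemma \ref{posint}. Concretely, for each $w\in D$ let $N(w)=\#\bigl(A\cap f(D_w)\bigr)$ be the number of points of $A$ on the $w$-th disk. Because $A$ contains no open set, for all but possibly a measure-zero set of parameters $w$ the image $u\circl f(D_w)$ is not contained in $Z_2$, so part (1) of Lemma \ref{posint} guarantees these intersection points are isolated, hence $N(w)$ is finite for almost every $w$, with each point contributing a positive local intersection number to $u\cdot Z_2$ by part (2).

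The crux is then a uniform bound on the integral $\int_D N(w)\,dw$. I would obtain this topologically: the total intersection count of the two-dimensional family of disks $\{f(D_w)\}_{w\in D}$ with $Z_2$, counted with the positive multiplicities from Lemma \ref{posint}, equals an intersection number in the ambient manifold that is controlled by the homology class of $u_*$ of the disk family and by $[Z_2]$. Since the fibers sweep out a fixed compact region and $Z_2$ is a fixed compact almost complex submanifold, this count is bounded by a constant depending only on the neighborhood, $u$, $\Omega$, and $J_1$; positivity is essential here so that cancellation cannot make the geometric point count exceed the homological one. Thus $\int_D N(w)\,dw<\infty$. Finally I would invoke the coarea formula for the map $f^{-1}$ (equivalently, slicing $A$ by the smooth foliation $\{f(D_w)\}$): the $2$-dimensional Hausdorff measure of $A$ in the neighborhood is comparable, up to the bounded Jacobian factors of the diffeomorphism $f$ (whose derivatives are controlled by the stated bounds $z_m\cdot\rho$), to $\int_D N(w)\,dw$. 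Summing over a finite cover yields $\mathcal H^2(A)<\infty$, and the same slicing shows $\dim_H A=2$ exactly, since $A$ meets generic disks in finitely many points (forcing $\dim_H A\le 2$) while not being discrete, so Lemma \ref{posint}(1) combined with the positivity of each contribution prevents $A$ from being lower-dimensional on a set of positive measure of disks.

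The main obstacle I expect is making the coarea/slicing step rigorous \emph{before} knowing $A$ is rectifiable: the coarea formula in its clean form applies to rectifiable sets or to the level sets of a Lipschitz map, and here $A$ is merely a closed nowhere-dense set. The correct route is to apply the coarea inequality to the smooth projection $\pi=\mathrm{pr}_2\circ f^{-1}:\mathcal N_x\to D$, whose fibers are exactly the disks $f(D_w)$; this gives $\mathcal H^2(A)\lesssim \int_D \mathcal H^0\bigl(A\cap \pi^{-1}(w)\bigr)\,dw=\int_D N(w)\,dw$ with no rectifiability hypothesis needed, only a uniform lower bound on the Jacobian of $\pi$ along the fiber directions, which the second and third bullet points on $f$ supply. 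The other delicate point is controlling the exceptional parameters $w$ for which $u\circ f(D_w)\subseteq Z_2$; I would argue these form a set disjoint from a fixed $A$-type direction (using the freedom, noted after the three bullets, to rotate the Gaussian coordinates so the disks approximate any chosen affine foliation), so that finitely many choices of direction together certify finiteness of $N(w)$ off a null set.
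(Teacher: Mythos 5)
Your overall strategy (local $J_1$-holomorphic disk foliations, positivity of intersections to count points on each slice, then a covering/coarea estimate) is the paper's strategy, but three of your key steps do not hold as stated. The most serious is the slicing step: the coarea \emph{inequality} (Eilenberg's inequality) for a Lipschitz projection $\pi$ gives $\int_D \mathcal H^0\bigl(A\cap\pi^{-1}(w)\bigr)\,dw \lesssim \mathcal H^2(A)$, i.e.\ it bounds the slice integral \emph{by} the measure of $A$ --- the opposite of what you need. The reverse inequality is the coarea \emph{formula}, which genuinely requires rectifiability: the graph of a Weierstrass-type function meets every fiber of the projection in a single point yet has infinite Hausdorff measure, so ``finite fibers plus a Jacobian bound'' does not control $\mathcal H^2(A)$ from above. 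The paper avoids this by a Vitali covering argument: it covers $A\cap f(\bar D\times\bar D)$ by $\epsilon$-balls centered on $A$, extracts $L$ disjoint ones whose $3\epsilon$-dilates still cover, and bounds $L\lesssim \epsilon^{-2}$ by comparing the $4$-volume $L\cdot\tfrac{1}{2}\pi^2\epsilon^4$ of the disjoint union with the Fubini integral over slices, each slice carrying at most $N$ points of $A$; this gives a direct box-counting bound on $\mathcal H^2$.

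Two further steps need repair. First, your uniform bound on the counts: there is no well-defined ``intersection number of the two-parameter family of disks with $Z_2$'' --- the family sweeps out a $4$-dimensional region whose intersection with the codimension-$2$ submanifold $Z_2$ is $2$-dimensional, not a finite signed count. The paper instead observes that the function $g(w)=\#\bigl(A\cap f(\bar D_w)\bigr)$ (counted with the positive local multiplicities of Lemma \ref{posint}) is upper semi-continuous on the compact parameter disk $\bar D$, hence attains a finite maximum $N$, which is the uniform bound used in the covering argument. Second, your claim that the set of $w$ with $f(D_w)\subset A$ has measure zero ``because $A$ contains no open set'' is a non sequitur: a nowhere-dense set of parameters of positive Lebesgue measure would still leave $A$ with empty interior, yet would force $\mathcal H^2(A)=\infty$, so this is exactly the case one must exclude. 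The paper proves the bad parameter set is \emph{finite} by taking an accumulation point, foliating transversally, and applying Lemma \ref{posint} together with Proposition \ref{uniquecont}; it then shows only finitely many complex directions at a point are tangent to $J_1$-holomorphic curves inside $A$, and rotates the Gaussian coordinates so that \emph{no} disk of the chosen foliation lies in $A$. You need all three repairs before the argument closes.
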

\begin{proof}
Since $M_1$ is compact, the Hausdorff measure will be independent of the choice of Hermitian metric. In fact, we will measure the set $A$ locally by the metric induced from the local coordinate we choose above. For any point $x\in A\subset M_1$, we can find a diffeomorphism $f^x$ onto an open subset of $M_1$ as above. The union of the images $f^x(D^x\times D^x)$ covers the set $A$. By Lemma \ref{compact}, $A$ is compact. Hence we can choose only finitely many $x_i$ such that these $f^{x_i}(D^{x_i}\times D^{x_i})$ covers the set $A$. We could assume all these $D^{x_i}$ have radius $\rho$. To show the $2$-dimensional Hausdorff measure $\mathcal H^2(A)<\infty$, we only need to show it for $A\cap f^{x_i}(D\times D)$ of each $x_i$. 

Now we will show $\mathcal H^2(A\cap f^{x}(D\times D))<\infty$ for any $x\in A$. We will simply write $f$ instead of $f^x$. 
Since for each $w\in D$, $f(D_w)$ is a $J_1$-holomorphic disk in $M_1$, we know it intersects $u^{-1}(Z_2)$ at finitely many points if it is not totally contained in $u^{-1}(Z_2)$ by Lemma \ref{posint}. Furthermore, we claim that there are only finitely many $w\in \bar D$ such that $f(D_w)$ is contained in $u^{-1}(Z_2)$. 

If it is not the case, we could assume without loss of generality that $0$ is an accumulation point of these $w$. Now we construct $J$-fiber-diffeomorphism in another direction. The goal is to foliate a neighborhood of $x$ by $J$-holomorphic disks transverse to $f(D_0)$. As above, we take coordinates centered at $x$ such that $x$ is the origin and $(0, w')$ is identified with $f(D_0)$. Then we choose a $J$-fiber-diffeomorphism $f': D'\times D'\rightarrow \mathbb C^2$, where $D'\subset \mathbb C$ is the disk of radius $\rho'<\rho$, such that 

\begin{itemize}
 \item For all $w'\in D'$, $f'(D'_{w'})$ is a $J_1$-holomorphic submanifold containing $(0, w')$. 
\item For all $w'\in D'$, dist$((\xi', w'); f'(\xi', w'))\le z\cdot \rho'\cdot |\xi'|$. Here $z$ depends only on $\Omega$ and $J_1$.
\item For all $w'\in D'$, the derivatives of order $m$ of $f'$ are bounded by $z_m\cdot \rho'$, where $z_m$ depends only on $\Omega$ and $J_1$.
\end{itemize}

In particular, all the disks $f'(D'_{w'})$ are transverse to $f(D_0)=f'(0 \times D')=0\times D'$. As being transverse is an open condition, $f'(D'_{w'})$ are transverse to $f(D_w)$ for all $|w|\le \epsilon$. Hence the intersection points of $f'(D'_{w'})$ and $u^{-1}(Z_2)$ are not isolated. By Lemma \ref{posint}, the whole disks $f'(D'_{w'})\subset u^{-1}(Z_2)$. In turn, we have $f'(D'\times D')\subset u^{-1}(Z_2)$. Since $f'(D'\times D')$ covers an open neighborhood of $x$ in $M_1$, we know  $u(M_1)\subset Z_2$ by Proposition \ref{uniquecont}. This contradicts to the assumption of our proposition. Hence, we have established our claim that  there are only finitely many $w\in \bar D$ such that $f(D_w)$ is contained in $u^{-1}(Z_2)$.

Moreover, we can actually choose our diffeomorphism $f$ such that none of the $J_1$-holomorphic disks $f(D_w)$ is contained in $u^{-1}(Z_2)$. We first show that, for any point $x\in M_1$, there are only finitely many complex directions of $T_xM_1$ such that there are $J_1$-holomorphic curves tangent to it and  contained in $u^{-1}(Z_2)$. Suppose there are infinitely many. Since the complex directions of $T_xM_1$ are parametrized by $\mathbb CP^1$, we know there is at least one direction, $v$, which is accumulative. By the perturbative nature of $J$-fiber-diffeomorphism, we can choose the local Gaussian coordinates such that $f(D_0)$ is transverse to $v$. Hence, for $|w|<\epsilon$, $f(D_w)$ are transverse to $v$ as well. In particular, the intersection numbers of  $u(f(D_w))$ and $Z_2$ are infinite which contradicts  Lemma \ref{posint} and Proposition \ref{uniquecont}. To summarize, we have proven that there are only finitely many complex directions of $T_xM_1$ such that there are $J_1$-holomorphic curves in $u^{-1}(Z_2)$ tangent to it. 

Hence, fixing $x$, we can choose a complex direction such that there is no $J$-holomorphic curve in $u^{-1}(Z_2)$ tangent to it. By the perturbative nature of $J$-fiber-diffeomorphisms, we can choose the local Gaussian coordinates and diffeomorphism $f$ such that no $f(D_w)$ is contained in $Z_2$ when $|w|$ is sufficiently small.

Now we estimate the Hausdorff measure of the set $A\cap  f(\bar D\times \bar D)$. The intersection $A\cap f(\bar D\times \bar D)$ is also compact. 
Furthermore, since $f$ is a diffeomorphism, we can choose $D$ smaller if necessary such that the distortion of $f$ at the larger open domain $2D\times 2D$ is bounded by a positive constant $C$. By our choice of the local coordinates and the diffeomorphism $f$, $A\cap f(\bar D_w)$ is a set of finitely many points for each $w\in \bar D$.
Look at the function $g: \bar D\rightarrow \mathbb N\cup \{0\}$ from the base disk $\bar D$ to non-negative integers whose value $g(w)$ is the intersection number of $u\circ f(\bar D_w)$ and $Z_2$. This is an upper semi-continuous function. Hence, it achieves maximal value at some point $w\in \bar D$, say $N$. 
Since each intersection point contributes positively by Lemma \ref{posint}, we know $A\cap f(\bar D_w)$ contains at most $N$ points for all $w\in \bar D$. Since  $A\cap  f(\bar D\times \bar D)$ is compact, we cover it by finitely many balls of radius $\epsilon$. By Vitali covering lemma, we can choose a subset of these balls which are disjoint to each others, say there are $L$ such balls, such that the union of the $L$ concentric balls with radius $3\epsilon$ covers the set $A\cap  f(\bar D\times \bar D)$. Each $\epsilon$-ball intersects $f( 2D_w)$ at an open set of area no greater than $\pi C^2\epsilon^2$. By coarea formula, we have $$N\cdot \pi C^2\epsilon^2\cdot \pi C^2(2\rho)^2>L \frac{1}{2}\pi^2\epsilon^4.$$ In other words, there are no more than $C'\cdot \epsilon^{-2}$ many balls with radius $3\epsilon$ covering $A\cap  f(\bar D\times \bar D)$. Thus the $2$-dimensional Hausdorff measure $\mathcal H^2(A\cap  f(\bar D\times \bar D))<\infty$, which in turn implies $\mathcal H^2(A)<\infty$.
\end{proof}

\section{$J$-holomorphic intersection subvariety}
In this section, we will first finish the proof of Theorem \ref{ICdim4} and then calculate the homology class of the intersection subvariety. We will also discuss two generalizations of Theorem \ref{ICdim4}. One is Theorem \ref{ICdim4MW}, which is a combination of \cite{MW} and our Theorem \ref{ICdim4}.  The other is the higher dimensional version, Theorem \ref{ICdim>4}.
\subsection{Positive cohomology assignment}\label{pca}
In this subsection, we will prove Theorem \ref{ICdim4} using the notion of positive cohomology assignment, which is introduced in \cite{T}. We assume $(X, J)$ is an almost complex manifold, and $C\subset X$ is merely a subset at this moment. Let $D\subset \mathbb C$ be the standard unit disk. A map $\sigma: D\rightarrow X$ is called {\it admissible} if $C$ intersects the closure of $\sigma(D)$ inside $\sigma(D)$. Next we recall the notion of a positive cohomology assignment to $C$, which is extracted from section 6.1(a) of \cite{T}.

\begin{definition}\label{PCA}
A positive cohomology assignment to the set $C$ is an assignment of an integer, $I(\sigma)$, to each admissible map $\sigma: D\rightarrow X$. Furthermore, the following criteria have to be met: 
\begin{enumerate}
\item If $\sigma: D\rightarrow X\setminus C$, then $I(\sigma)=0$. 

\item If $\sigma_0, \sigma_1: D\rightarrow X$ are admissible and homotopic via an admissible homotopy (a homotopy $h:[0, 1]\times D\rightarrow X$ where $C$ intersects the closure of Image$(h)$ inside Image$(h)$), then $I(\sigma_0)=I(\sigma_1)$.

\item Let $\sigma: D\rightarrow X$ be admissible and let $\theta: D\rightarrow D$ be a proper, degree $k$ map. Then $I(\sigma\circ \theta)=k\cdot I(\sigma)$.

\item Suppose that $\sigma: D\rightarrow X$ is admissible and that $\sigma^{-1}(C)$ is contained in a disjoint union $\cup_iD_i\subset D$ where each $D_i=\theta_i(D)$ with $\theta_i: D_i\rightarrow D$ being an orientation preserving embedding. Then $I(\sigma)=\sum_iI(\sigma\circ \theta_i)$.

\item If $\sigma: D\rightarrow X$ is admissible and a $J$-holomorphic embedding with $\sigma^{-1}(C)\ne \emptyset$, then $I(\sigma)>0$.
\end{enumerate}
\end{definition}

The following is Proposition 6.1 of \cite{T}.

\begin{prop}\label{pcaholo}
Let $(X, J)$ be a $4$-dimensional almost complex manifold and let $C\subset X$ be a closed set with finite $2$-dimensional Hausdorff measure and a positive cohomology assignment. Then $C$ supports a compact $J$-holomorphic $1$-subvariety. 
\end{prop}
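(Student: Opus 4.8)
The plan is to reduce the statement to a local model and then reconstruct $C$ as the image of a pseudoholomorphic map by a Weierstrass-type argument, following the strategy of King in the integrable case adapted to the almost complex setting. Since finiteness of $\mathcal{H}^2$ and the existence of a positive cohomology assignment are both local conditions, and since by property (1) of Definition \ref{PCA} the assignment vanishes off $C$, it suffices to analyze $C$ in a small ball around an arbitrary point $p\in C$. There I would fix Gaussian coordinates and a $J$-fiber-diffeomorphism $f: D\times D\to X$ as in Lemma \ref{diskfol}, foliating the ball by $J$-holomorphic disks $\{D_w\}_{w\in D}$ transverse to a fixed complex base direction. For each $w$ with $C\cap f(D_w)\neq\emptyset$ the disk $\sigma_w=f(\cdot,w)$ is admissible, and I set $N(w):=I(\sigma_w)\ge 0$, a nonnegative integer by property (5).

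The first key step is to show that, for a \emph{generic} choice of foliation direction, $N(w)$ is a constant $N$ independent of $w$ and that $C\cap f(D_w)$ consists of at most $N$ points for every $w$. Constancy follows from homotopy invariance (property (2)) together with additivity (property (4)): as $w$ varies the intersection points of $C$ with the moving disk may merge or split, but the total count weighted by the assignment is preserved so long as $C$ does not meet the boundary circle of the disk, which I can arrange by shrinking since $C$ is closed with finite $\mathcal{H}^2$ and hence cannot leak out through the sides of the foliation. Positivity (property (5)) together with Lemma \ref{posint} then bounds the number of points per fiber by $N$. Choosing the direction generically and using the finite-measure hypothesis to avoid the finitely many bad complex directions, exactly as in the proof of Proposition \ref{Hd2}, I can also arrange that no fiber disk is entirely contained in $C$.

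With this in hand, the idea is to encode $C$ near $p$ as a multivalued graph over the base disk. Assigning to each $w$ the unordered $N$-tuple of points of $C\cap f(D_w)$, counted with the multiplicities furnished by the cohomology assignment (here property (3) matches multiplicities with the branched-cover structure), defines a map $w\mapsto [z_1(w),\dots,z_N(w)]\in \mathrm{Sym}^N(\mathbb{C})$, equivalently the tuple of elementary symmetric functions $\sigma_1(w),\dots,\sigma_N(w)$. Properties (2)--(4) guarantee these symmetric functions are continuous and single-valued. The crucial analytic input is that, because each sheet lies on a $J$-holomorphic disk and $C$ is $J$-holomorphic to leading order at its points of tangency, the $\sigma_k$ satisfy a perturbed Cauchy--Riemann equation $\bar\partial\sigma_k=R_k$ with $R_k$ controlled by the deviation of $J$ from $J_0$ and by the $\sigma_j$ themselves. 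Applying the Carleman similarity principle and elliptic regularity upgrades the $\sigma_k$ to genuine solutions, so that $C$ is locally the zero set of a pseudo-polynomial $\zeta^N+\sigma_1\zeta^{N-1}+\cdots+\sigma_N$ with pseudoholomorphic-type coefficients. Normalizing (resolving the discriminant locus) exhibits each local branch as the image of a $J$-holomorphic map from a disk, and gluing the finitely many local pieces over the compact set $C$ produces finitely many irreducible $J$-holomorphic curves with positive integer multiplicities, i.e. a compact $J$-holomorphic $1$-subvariety supported on $C$.

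The main obstacle is the regularity step: the elementary symmetric functions are a priori only continuous, and one must show they solve an honest (perturbed, and degenerate along the discriminant) $\bar\partial$-equation before the similarity principle applies. In the integrable case this is classical Weierstrass preparation, but in the almost complex case the absence of holomorphic coordinates forces the estimates to be extracted directly from the nonlinear Cauchy--Riemann equation satisfied by the foliating disks of Lemma \ref{diskfol}, together with the positivity encoded in the cohomology assignment. Controlling the behavior at the singular points, where sheets collide and $N(w)$ drops on a set of measure zero, is the delicate part, and is precisely the content of the local analysis in Section~6 of \cite{T}.
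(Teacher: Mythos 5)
First, note that the paper does not prove this proposition at all: it is quoted verbatim as Proposition~6.1 of \cite{T}, and the text only summarizes Taubes' argument in two steps --- (i) the axioms of the positive cohomology assignment together with $\mathcal H^2(C)<\infty$ force $C$ to define a rectifiable, closed, almost complex integral $2$-cycle (this is essentially Lemma~6.10 of \cite{T}, which exhibits an open dense subset of $C$ as a Lipschitz submanifold with $J$-invariant tangent planes), and (ii) such cycles are realized by $J$-holomorphic $1$-subvarieties, by \cite{RT}, Almgren and Chang, using the fact that every $4$-dimensional almost complex manifold is locally symplectic. Your King-style Weierstrass/pseudo-polynomial route is a genuinely different organization of the argument, but as written it has two concrete gaps.

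First, your bound ``$C\cap f(D_w)$ consists of at most $N$ points'' is justified by appealing to Lemma~\ref{posint}, which is a statement about intersections of $J$-holomorphic curves with almost complex \emph{submanifolds}. At this stage $C$ is only a closed set of finite $\mathcal H^2$-measure, so positivity of intersections is not available --- it is exactly what axioms (4) and (5) of Definition~\ref{PCA} are designed to substitute for. Moreover, before axiom (4) can be applied to localize $I(\sigma_w)$ into disjoint sub-disks, you must first rule out that $C\cap f(D_w)$ is infinite with accumulation points; this requires the finite-measure hypothesis and a coarea/Fubini argument over the foliation, not a positivity statement. Second, and more seriously, the step you yourself flag as ``the main obstacle'' --- that the elementary symmetric functions are continuous, single-valued, and satisfy a perturbed $\bar\partial$-equation to which the similarity principle applies, with controlled degeneration along the discriminant --- is precisely the content of Section~6 of \cite{T}. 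Deferring it there makes the proposal circular: you have reduced the proposition to the analysis that constitutes its proof. To make this self-contained you would need to actually derive the $\bar\partial$-equation for the $\sigma_k$ from the equations satisfied by the foliating disks of Lemma~\ref{diskfol} and prove the requisite a priori regularity (continuity of the multivalued graph, measurability and integrability of the defect term), none of which follows formally from Definition~\ref{PCA}.
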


Taubes' proof of Proposition \ref{pcaholo} could be understood as consisting of the following two steps. First, he proves that, under the assumptions, the set $C$ gives an almost complex integral $2$-cycle  (see \cite{RT}):
\medskip

\begin{enumerate} 
\item {\it Rectifiability}: There exists an at most countable union of disjoint oriented $C^1$ $2$-submanifolds $\mathcal C=\cup_iN_i$ and an integer multiplicity $\theta\in L_{\mbox{loc}}^1(\mathcal C)$ such that for any smooth compactly supported $2$-form $\psi$ one has $$C(\psi)=\sum_i\int_{N_i}\theta\psi.$$
\item {\it Closedness}: $\partial C=0$, {\it i.e.} $\forall \alpha\in \mathcal D^{1}(M), C(d\alpha)=0$.
\item {\it Almost complex}: For $\mathcal H^2$ almost every point $x\in \mathcal C$, the approximate tangent plane $T_x$ to the rectifiable set $\mathcal C$ is invariant under the almost complex structure $J$, {\it i.e.} $J(T_x)=T_x$.
\end{enumerate}

\medskip

In fact, this step is eventually Lemma 6.10 of \cite{T}, which shows that an open dense subset of $C$ has the structure of a Lipschitz submanifold of $X$.

The second step is to show that any integral $2$-dimensional almost complex cycle $C$ could be realized by a $J$-holomorphic subvariety $\Theta=\{(C_i, m_i)\}$ in the sense that $C(\psi)=\sum_i m_i\int_{C_i}\psi$. The latter result is generalized in \cite{RT} to any $2p$-dimensional almost complex manifold $(M, J)$ satisfying the locally symplectic property. It could also be derived from Almgren's big regularity paper \cite{Alm} and S. Chang's PhD thesis \cite{Ch}. 
Recall we say $(X, J)$ has the {\it locally symplectic} property if in a neighborhood of each point $x\in X$, there exists a symplectic form compatible with $J$.
 It was shown in \cite{Lej, RTcre}\footnote{This result was first noticed in \cite{RTcre} Lemma A.1. However, the proof is incomplete since it relies on a wrong claim of Peter J. Olver. A complete proof is given in \cite{Lej}.}  that any $4$-dimensional almost complex manifold $(X, J)$ has the locally symplectic property. However, a general higher dimensional almost complex manifold is not locally symplectic. 

Let us return to the setting of Theorem \ref{ICdim4}. Suppose $(M^{2n}, J)$ is an almost complex $2n$-dimensional manifold, and  $Z_2$ is a codimension $2$ compact connected almost complex submanifold. Let $M_1$ be a compact connected almost complex $4$-manifold and $u: M_1\rightarrow M$ a pseudoholomorphic map such that $u(M_1)\nsubseteq Z_2$. 
In Proposition \ref{Hd2}, we have shown that $A= u^{-1}(Z_2)$ is a closed set with finite $2$-dimensional Hausdorff measure. To show it is a $J$-holomorphic $1$-subvariety, we only need to show that it has a positive cohomology assignment with $X=M_1$. 

For any admissible map $\sigma: D\rightarrow M_1$ with respect to $A=u^{-1}(Z_2)$, we assign an integer $IC(\sigma)$ as follows. When $\sigma: D\rightarrow M_1$ is admissible, its composition with the pseudoholomorphic map $u: M_1\rightarrow M$, $u\circ \sigma:D\rightarrow M$, is also admissible with respect to $Z_2\subset M$. There exists an arbitrarily small perturbation of $u\circ \sigma$ which produces a map $\sigma'$ homotopic to $u\circ \sigma$ through admissible maps such that $\sigma'$ is transverse to $Z_2$. This is called an admissible perturbation. Remark that we have to perturb the composition $u\circ \sigma$ instead  of just $\sigma$ to achieve transversality. The set $T$ of intersection points of $\sigma'(D)$ with $Z_2$ is a finite set of signed points. We define $IC(\sigma)$ to be the sum of these signs. By general intersection theory of submanifolds, see {\it e.g.} \cite{GP}, the intersection number $IC(\sigma)$ is independent of the choice of the admissible perturbation. 

\begin{prop}\label{icpca}
The assignment $IC(\sigma)$ to an admissible map $\sigma: D\rightarrow M_1$ defines a positive cohomology assignment to $A=u^{-1}(Z_2)$. 
\end{prop}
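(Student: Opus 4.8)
The plan is to verify directly the five defining properties of Definition \ref{PCA} for the assignment $IC$. The unifying observation is that $IC(\sigma)$ is nothing but the smooth intersection number in $M$ of the disk $u\circ\sigma$ with the submanifold $Z_2$, computed after an admissible transverse perturbation. Consequently, properties (1)--(4), which make $IC$ a \emph{cohomology} assignment, reduce to the standard intersection theory of a smooth map of a disk with a closed submanifold of $M$ (see, e.g., \cite{GP}), applied to $u\circ\sigma$ rather than to $\sigma$ itself; the only place where pseudoholomorphicity enters is the positivity clause (5). Throughout, the key bookkeeping device is admissibility: since $A=u^{-1}(Z_2)$ and $Z_2$ are closed, $\sigma$ admissible with respect to $A$ forces $Z_2\cap \overline{u\circ\sigma(D)}\subset u\circ\sigma(D)$, so that all intersection points of $u\circ\sigma$ with $Z_2$ stay in the interior and none can escape to the boundary under a sufficiently small perturbation. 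This is exactly what is needed for the signed count to be well defined and stable.

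I would first dispose of well-definedness together with property (2). Any two admissible transverse perturbations of $u\circ\sigma$ are joined by a small admissible homotopy; composing an admissible homotopy $h$ in $M_1$ with $u$ gives an admissible homotopy $u\circ h$ in $M$, and a generic transverse perturbation of $u\circ h$ has $Z_2$-preimage a compact $1$-manifold with boundary lying over $\{0,1\}\times D$. The usual cobordism argument then shows the two boundary counts agree, which simultaneously proves independence of the perturbation and homotopy invariance. Property (1) is then immediate: if $\sigma(D)\subset M_1\setminus A$ then, by admissibility and compactness, $u\circ\sigma(\overline D)$ is disjoint from $Z_2$, so a small perturbation meets $Z_2$ nowhere and $IC(\sigma)=0$.

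For property (3), fix an admissible transverse perturbation $\sigma'$ of $u\circ\sigma$ with intersection points $t_1,\dots,t_m\in D$ of signs $\epsilon_1,\dots,\epsilon_m$; after a further small perturbation I may assume each $t_j$ is a regular value of the proper map $\theta$. Then $\sigma'\circ\theta$ is an admissible transverse perturbation of $u\circ\sigma\circ\theta$, its $Z_2$-preimage is $\theta^{-1}(\{t_1,\dots,t_m\})$, and at each $s\in\theta^{-1}(t_j)$ the intersection sign is $\epsilon_j\cdot\operatorname{sign}\det d\theta_s$. Summing over $\theta^{-1}(t_j)$ gives $\epsilon_j\cdot\deg\theta=k\,\epsilon_j$, hence $IC(\sigma\circ\theta)=k\sum_j\epsilon_j=k\,IC(\sigma)$. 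For property (4), I would choose the perturbation of $u\circ\sigma$ so small that all of its intersection points with $Z_2$ lie in arbitrarily small neighborhoods of $(u\circ\sigma)^{-1}(Z_2)=\sigma^{-1}(A)\subset\bigcup_iD_i$, hence inside $\bigcup_iD_i$; the signed count then splits as a sum over the $D_i$, and because each $\theta_i$ is an orientation-preserving embedding the contribution from $D_i$ equals $IC(\sigma\circ\theta_i)$.

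The heart of the matter is property (5), the only step that uses the almost complex structures. Suppose $\sigma$ is an admissible $J_1$-holomorphic embedding with $\sigma^{-1}(A)\ne\emptyset$. Then $u\circ\sigma\colon D\to M$ is $J$-holomorphic. Admissibility prevents $\sigma(D)$ from being contained in the closed set $A$ (otherwise the boundary circle $\sigma(\partial D)\subset\overline{\sigma(D)}\subset A$ would meet $A$, contradicting admissibility), so $u\circ\sigma(D)\not\subset Z_2$. Hence Lemma \ref{posint} applies: the points of $(u\circ\sigma)^{-1}(Z_2)=\sigma^{-1}(A)$ are isolated, and each contributes a local intersection number at least $1$. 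Since $\sigma^{-1}(A)\neq\emptyset$ there is at least one such point, so $IC(\sigma)\ge 1>0$. I expect the main subtlety of the whole argument to be precisely this interplay: one must perturb the composition $u\circ\sigma$ rather than $\sigma$ alone (because $u$ need not be an immersion, so perturbations of $\sigma$ cannot by themselves force transversality to $Z_2$ in $M$), while keeping every perturbation admissible so that positivity of intersections for the $J$-holomorphic curve $u\circ\sigma$ can be invoked at each intersection point.
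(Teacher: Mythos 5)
Your proof is correct and follows essentially the same route as the paper: properties (1)--(4) are verified by the standard smooth intersection theory of the perturbed composition $u\circ\sigma$ with $Z_2$ (the paper invokes the Boundary Theorem of \cite{GP} for (2), which is exactly your cobordism argument), and property (5) is reduced to Lemma \ref{posint} after using admissibility to rule out $u\circ\sigma(D)\subset Z_2$. The only cosmetic difference is that the paper handles the case of constant $u\circ\sigma$ as a separate subcase of (5), whereas your up-front exclusion of $\sigma(D)\subset A$ subsumes it.
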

\begin{proof}

In our situation, Definition \ref{PCA}(1) means if $u\circ \sigma(D)\cap Z_2=\emptyset$, then $IC(\sigma)=0$. This is clear from our definition.

Assertion (2) of Definition \ref{PCA} follows from the following so-called Boundary Theorem \cite{GP}.
\begin{theorem}\label{bd}
Suppose $X$ is the boundary of some compact manifold $W$ and $g: X\rightarrow M$ is a smooth map.  If $g$ extends to all of $W$, then the intersection number of $g$ and $Z$ is zero for any closed submanifold $Z$ in $M$ of complementary dimension. 
\end{theorem}

Here we have $X=S^2$ and $W=D\times [0, 1]$, $g=\partial h$ and $Z=Z_2$. Moreover, our admissible maps are understood as their composition with the map of $u: M_1\rightarrow M$. Since $Z_2$ intersects the closure of $Image(h)$ inside $Image(h)$, we know the intersection number of $g$ and $Z_2$ is $IC(\sigma_0)-IC(\sigma_1)$. By Boundary Theorem, it is zero.

To show assertion (3), we first choose an admissible map $\sigma'$ (with respect to $Z_2$) transverse to $Z_2$ which is perturbed from $u\circ\sigma$. Hence $\sigma'^{-1}(Z_2)$ is a finite set of signed points in $D$. Since the degree of a map $f: X\rightarrow Y$ is just the intersection number of $f$ and any point $y\in Y$, we know $IC(\sigma'\circ\theta)$ is the sum of the signed points in $\sigma'^{-1}(Z_2)$ multiplied by $\deg\theta=k$. That is  $IC(\sigma\circ\theta)=k\cdot IC(\sigma)$.

For assertion (4), since $\sigma|_{D-\cup_iD_i}\cap A =\emptyset$, we can choose the admissible perturbation $\sigma'$ of $u\circ \sigma$ within $\cup_iD_i$. The intersection number $IC(\sigma)$ which is calculated as the  sum of signs of intersection points of $\sigma'$ is thus $\sum_iIC(\sigma\circ \theta_i)$.

When $\sigma$ is $J$-holomorphic, the composition $u\circ \sigma: D \rightarrow M$ is a $J$-holomorphic map. If it is non-constant, although it is not an embedding in general, we know $u\circ \sigma$ is an admissible map with respect to $Z_2$ since $\sigma(\partial D)\cap A=\emptyset$ if and only if $u\circ \sigma(\partial D)\cap Z_2=\emptyset$. Hence, the statement of assertion (5) for this case follows from the positivity of intersections of a $J$-holomorphic curve and an almost complex divisor, {\it i.e.} Lemma \ref{posint}. If $u\circ \sigma$ is a constant map, and since we assume $\sigma^{-1}(A)\ne \emptyset$, which is equivalent to $(u\circ\sigma)^{-1}(Z_2)\ne \emptyset$, we know the constant map $u\circ \sigma$ maps to a point in $Z_2$. Hence, we have $\sigma(D)\subset A$, which contradicts to the assumption that $\sigma$ is admissible. Hence, Definition \ref{PCA}(5) also follows. 
\end{proof}

Now we are ready to prove our first main result.
\begin{proof}(of Theorem \ref{ICdim4})
By Lemma \ref{compact} and Proposition \ref{Hd2}, $A=u^{-1}(Z_2)$ is a closed set with finite $2$-dimensional Hausdorff measure. By Proposition \ref{icpca}, $A$ could be endowed with a positive cohomology assignment, $IC(\sigma)$, for each admissible map $\sigma: D\rightarrow M_1$. Hence, by Proposition \ref{pcaholo}, the preimage $A=u^{-1}(Z_2)$ supports a $J_1$-holomorphic $1$-subvariety $\Theta$. 
\end{proof}

\subsection{When $Z_2$ is a $1$-subvariety in an almost complex $4$-manifold}\label{Z2subv}
When $(M, J)$ is an almost complex $4$-manifold, the statement of Theorem \ref{ICdim4} still holds even if $Z_2$ is merely assumed to be a $J$-holomorphic $1$-subvariety.  Eventually, what we need is a slightly more general intersection theory working for continuous maps and a version of Lemma \ref{posint} and Proposition \ref{uniquecont}. These are available in section 7 of \cite{MW}.

Let $\Sigma_i$ be compact oriented surfaces (with or without boundary), and let $u_i: \Sigma_i\rightarrow M$ be continuous maps such that $u_1(\partial\Sigma_1)\cap u_2(\Sigma_2)=\emptyset=u_2(\partial\Sigma_2)\cap u_1(\Sigma_1)$. Then by a generic smooth perturbation of $u_i$ relative to the boundary $\partial \Sigma_i$, we get maps $v_i$ such that if $v_1(p_1)=v_2(p_2)$, then $v_i$ is immersed at $p_i$ and the maps are transverse there, {\it i.e.} 
\begin{equation}\label{maptran}
T_{v_i(p_i)}=v_{1*}(T_{p_1}\Sigma_1)\oplus v_{2*}(T_{p_2}\Sigma_2).
\end{equation}
 Hence, the set of intersections of $v_1$ and $v_2$ is a finite set $T(v_1, v_2)$ of signed points $(p_1, p_2)$ such that $v_1(p_1)=v_2(p_2)$ where the sign $\delta(p_1, p_2)$ is $1$ if the left and right sides of \eqref{maptran} have the same orientation and $-1$ if not. The intersection number $u_1\cdot u_2$ is the sum of these signs. It is independent of the choice of the $v_i$, and thus is a homotopy invariant of $u_i$ relative to the boundaries. 
  
This intersection form can be localized as follows. Suppose $(p_1, p_2)$ is an isolated point of $T(u_1, u_2)$. Then there is some neighborhood $U$ of $P=u_i(p_i)$ such that if $W_i$ is the connected component of $u_i^{-1}(U)$ containing $p_i$, then $T(u_1|_{W_1}, u_2|_{W_2})$ contains only the point $(p_1, p_2)$. The intersection number $u_1|_{W_1}\cdot u_2|_{W_2}$ is independent of the choice of such $U$, and is thus a local invariant $\delta_{u_1, u_2}(p_1, p_2)$. Furthermore, if $T(u_1, u_2)$ is finite, then $u_1\cdot u_2=\sum_{u_1(p_1)=u_2(p_2)}\delta_{u_1, u_2}(p_1, p_2)$.
 
What we need to replace Lemma \ref{posint} and Proposition \ref{uniquecont} is the following, which is Theorem 7.1 of \cite{MW}.

\begin{theorem}\label{MW7.1}
Let $u_i: \Sigma_i\rightarrow M$ be maps that are $J$-holomorphic in a neighborhood of $p_i\in \Sigma_i$, where $u_1(p_1)=u_2(p_2)$. Suppose also that there are no neighborhoods $D_i$ of $p_i$ such that $u_1(D_1)=u_2(D_2)$. Then $(p_1, p_2)$ is an isolated intersection point of $T(u_1, u_2)$, and the intersection number $\delta_{u_1, u_2}(p_1, p_2)$ is greater than or equal to $1$, and strictly greater than $1$ unless $u_1$ and $u_2$ are transverse immersions at $p_1$ and $p_2$.
\end{theorem}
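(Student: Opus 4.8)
The plan is to reduce the statement to the classical positivity of intersection multiplicities for holomorphic curve germs, using the Micallef--White local representation of $J$-holomorphic curves. Since both assertions are local near $p_1$ and $p_2$, I would first restrict $u_i$ to small disks $D_i$ and choose coordinates identifying a neighborhood of $P=u_1(p_1)=u_2(p_2)$ with a ball in $\mathbb{C}^2$ on which $J$ agrees with the standard structure $J_0$ at the origin. After composing with the holomorphic branched-covering decomposition recalled in the introduction, I may assume each $u_i$ is, up to reparametrizing the domain, a (possibly multiple) cover of a somewhere-injective $J$-holomorphic germ, so that the Hartman--Wintner / similarity-principle asymptotics give a leading expansion $u_i(z)=P+\alpha_i z^{k_i}+o(|z|^{k_i})$ with $\alpha_i\neq 0$.

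The heart of the argument is the simultaneous straightening step. I would invoke the Micallef--White structure theorem to produce a local homeomorphism $\Phi$ of a neighborhood of $P$, which is a $C^1$ diffeomorphism, carrying both images $u_1(D_1)$ and $u_2(D_2)$ onto honest holomorphic curves $C_1,C_2\subset(\mathbb{C}^2,J_0)$. The hypothesis that there are no neighborhoods $D_i$ with $u_1(D_1)=u_2(D_2)$ guarantees that $C_1$ and $C_2$ are distinct holomorphic germs at the origin. This reduces the entire problem to the complex-analytic category.

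With the two curves made holomorphic, I would conclude by classical facts. Distinct holomorphic germs in $\mathbb{C}^2$ meet only at the origin in a sufficiently small ball, which yields the isolatedness of $(p_1,p_2)$ in $T(u_1,u_2)$. The intersection multiplicity $C_1\cdot_0 C_2=\dim_{\mathbb{C}}\mathcal{O}_0/(f_1,f_2)$, where $f_i$ are local defining functions, is a positive integer, and it equals $1$ precisely when both germs are smooth and meet transversally. Finally, because the topological intersection number $\delta_{u_1,u_2}(p_1,p_2)$ is defined through generic smooth perturbations and is invariant under orientation-preserving $C^1$ diffeomorphisms, it is unchanged by $\Phi$ and hence equals the algebraic count $C_1\cdot_0 C_2$ (all local contributions of a holomorphic intersection being $+1$ after a transverse perturbation). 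This gives $\delta_{u_1,u_2}(p_1,p_2)\geq 1$, with equality if and only if $u_1$ and $u_2$ are transverse immersions at $p_1$ and $p_2$.

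The main obstacle is the straightening step: producing the $C^1$ coordinate change that simultaneously holomorphizes both curves. The delicacy is twofold. First, the natural normalization of a single $J$-holomorphic germ is only $C^1$ in general, so one must work with this limited regularity and verify that it suffices to transport both the intersection count and transversality faithfully. Second, when the branches are tangent to high order or are genuine multiple covers, one must control the relative asymptotic expansions carefully enough to see that the straightened germs remain distinct and that no spurious cancellation occurs in the perturbed count --- this is exactly where positivity, rather than mere finiteness, is extracted.
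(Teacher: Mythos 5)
First, a point of reference: the paper does not prove this statement at all. It is quoted verbatim as Theorem 7.1 of \cite{MW} and used as a black box in Section \ref{Z2subv} to extend Theorem \ref{ICdim4} to the case where $Z_2$ is a $J$-holomorphic $1$-subvariety. So your sketch has to be measured against the argument of Micallef--White, not against anything written in the present paper.

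Measured that way, your proposal has a genuine gap at precisely the step you yourself flag as ``the heart of the argument.'' The Micallef--White structure theorem normalizes \emph{one} $J$-holomorphic germ at a time: after a $C^{1}$ change of ambient coordinates and a reparametrization of the domain, a single germ takes the form $z\mapsto(z^{Q},w(z))$ where $w$ has a controlled asymptotic expansion with leading exponents strictly larger than $Q$ --- the image is \emph{not} an honest holomorphic curve, and there is no statement producing a single $C^{1}$ diffeomorphism $\Phi$ that simultaneously turns two distinct, possibly highly tangent or multiply covered germs into genuinely holomorphic ones. Such a $\Phi$ would have to be holomorphic to arbitrarily high order along the common tangent directions of the two branches, which is a nontrivial normal-form problem that your sketch does not address; yet your entire reduction to $C_1\cdot_0 C_2=\dim_{\mathbb C}\mathcal O_0/(f_1,f_2)$ depends on it. You acknowledge this as ``the main obstacle'' but do not close it, so as written the argument does not constitute a proof. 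What Micallef and White actually do is different in substance: they normalize each branch separately, compare the resulting asymptotic expansions directly, and compute $\delta_{u_1,u_2}(p_1,p_2)$ as a sum of winding numbers (degrees of maps between small circles) extracted from the leading terms of the difference of the two parametrizations; positivity and the characterization of $\delta=1$ as transverse intersection of immersed points then follow from that count rather than from the ideal-theoretic multiplicity. A secondary issue: isolatedness is claimed for $(p_1,p_2)$ in $T(u_1,u_2)\subset\Sigma_1\times\Sigma_2$, and since the hypothesis only excludes equality of \emph{images}, you still need to rule out accumulation coming from the (possibly multiply covered, non-injective) parametrizations, not just from the image germs; this requires the same asymptotic comparison rather than the set-theoretic statement that distinct holomorphic germs meet only at the origin.
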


With Theorem \ref{MW7.1} in hand, our argument for Theorem \ref{Hd2} extends to the case when $(M, J)$ is an almost complex $4$-manifold and $Z_2$ is a $J$-holomorphic $1$-subvariety. In fact, the argument shows that the preimage under $u$ of each irreducible component of $Z_2$ has finite $2$-dimensional Hausdorff measure. 

For the second part, $IC(\sigma)$ is also well defined for any admissible map $\sigma: D\rightarrow M_1$ with respect to $A=u^{-1}(Z_2)$ by our discussion of intersection theory above. Moreover, Proposition \ref{icpca} still holds since Theorem \ref{bd} also holds when $Z$ is the image of a closed manifold of complementary dimension. Hence Theorem \ref{ICdim4} holds in our setting. 

\begin{theorem}\label{ICdim4MW}
Suppose $(M^{4}, J)$ is an almost complex $4$-manifold, and  $Z_2$ is a $J$-holomorphic $1$-subvariety. Let $(M_1, J_1)$ be a closed connected almost complex $4$-manifold and $u: M_1\rightarrow M$ a pseudoholomorphic map such that $u(M_1)\nsubseteq Z_2$. Then $u^{-1}(Z_2)$ supports a $J_1$-holomorphic $1$-subvariety in $M_1$.
\end{theorem}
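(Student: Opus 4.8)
The plan is to run the two-step proof of Theorem \ref{ICdim4} essentially verbatim, the only change being that the two places where smoothness of $Z_2$ entered---Lemma \ref{posint} and the unique-continuation Proposition \ref{uniquecont}---are now fed by the McDuff--Williams positivity statement Theorem \ref{MW7.1} together with the intersection theory for continuous maps recalled above. Concretely, I would show that $A=u^{-1}(Z_2)$ is a closed set of finite $2$-dimensional Hausdorff measure carrying a positive cohomology assignment, and then invoke Proposition \ref{pcaholo}. Throughout, write $Z_2=\{(C_i,m_i)\}$ with $C_i=\phi_i(\Sigma_i')$ the images of somewhere-injective $J$-holomorphic maps $\phi_i\colon\Sigma_i'\to M$ from compact curves.

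For the finiteness step I would adapt the proof of Proposition \ref{Hd2}. Since $A=\bigcup_i u^{-1}(C_i)$ is a finite union, it suffices to bound $\mathcal H^2\bigl(u^{-1}(C_i)\bigr)$ for each $i$. Fixing $x\in M_1$ and a $J$-fiber-diffeomorphism $f\colon D\times D\to M_1$ as in Section \ref{H2fin}, each composite $u\circ f(D_w)\colon D\to M$ is a $J$-holomorphic disk, and its intersections with $C_i$ are counted by the local invariants $\delta_{u\circ f(D_w),\phi_i}$. Theorem \ref{MW7.1} guarantees that, unless $u\circ f(D_w)$ and $\phi_i$ agree on neighborhoods of an intersection point, these intersections are isolated and each contributes $\delta\ge 1$; the excluded alternative is ruled out exactly as before, since local coincidence of images would force an open piece of $M_1$ into $u^{-1}(C_i)$ and hence, by the unique-continuation dichotomy of Theorem \ref{MW7.1}, $u(M_1)\subset C_i\subset Z_2$, contrary to hypothesis. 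Upper semicontinuity of the fibrewise intersection count and the coarea estimate then give $\mathcal H^2\bigl(A\cap f(\bar D\times\bar D)\bigr)<\infty$, and compactness of $A$ (Lemma \ref{compact}) finishes the finiteness.

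For the positive cohomology assignment I would set, for an admissible $\sigma\colon D\to M_1$,
$$IC(\sigma)=\sum_i m_i\,(u\circ\sigma)\cdot\phi_i,$$
the weighted intersection number of the continuous map $u\circ\sigma$ with the $1$-subvariety $Z_2$, computed through the transverse perturbations $v_i$ of the preceding discussion. Axioms (1)--(4) of Definition \ref{PCA} transfer directly from the proof of Proposition \ref{icpca}: axiom (2) uses the Boundary Theorem \ref{bd}, which, as already noted, holds when the target is the image of a closed manifold of complementary dimension---here the closed curves $\Sigma_i'$---while axioms (1), (3), (4) are formal properties of the signed count. For the positivity axiom (5), when $\sigma$ is a $J$-holomorphic embedding with $\sigma^{-1}(A)\ne\emptyset$ the composite $u\circ\sigma$ is $J$-holomorphic and meets some $C_i$ at an interior point, so Theorem \ref{MW7.1} forces either a contribution $\ge m_i\ge 1$ or a local coincidence of images; the latter would give $\sigma(D)\subset A$ and hence $\sigma(\partial D)\subset A$, contradicting admissibility. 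Thus $IC(\sigma)>0$, and Proposition \ref{pcaholo} applies to conclude that $A$ supports a $J_1$-holomorphic $1$-subvariety.

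The main obstacle, and the sole genuine departure from the smooth case, is controlling intersections at the singular points of $Z_2$ and, in tandem, replacing the unique-continuation Proposition \ref{uniquecont} by a statement valid for a possibly singular, multiply-covered target. At a node or branch point of some $C_i$ the target is not a smooth divisor, so neither the isolatedness nor the positivity of the test-disk intersection can be read off from Lemma \ref{posint}; the entire weight of the argument there rests on Theorem \ref{MW7.1}, which supplies both the positivity $\delta\ge 1$ and the dichotomy \emph{positive intersection or locally coincident images} that drives the unique-continuation exclusions in both steps. The remaining care is bookkeeping: checking that the multiplicities $m_i$ enter only as positive weights and never endanger positivity, and that the componentwise Hausdorff-measure estimate assembles into a single positive cohomology assignment on all of $A$.
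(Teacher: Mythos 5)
Your proposal is correct and follows essentially the same route as the paper: the paper likewise proves Theorem \ref{ICdim4MW} by rerunning the two-step argument of Theorem \ref{ICdim4}, substituting the Micallef--White intersection theory for continuous maps and Theorem \ref{MW7.1} in place of Lemma \ref{posint} and Proposition \ref{uniquecont}, establishing finiteness of $\mathcal H^2$ component by component, and noting that $IC(\sigma)$ and the Boundary Theorem still make sense when $Z_2$ is the image of closed curves of complementary dimension. Your explicit weighted formula for $IC(\sigma)$ and your flagging of the singular points of $Z_2$ as the only genuine new issue match what the paper leaves implicit.
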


\subsection{The homology class}\label{homology}
We can also determine the homology class of the $J$-holomorphic $1$-subvariety $A\subset M_1$ (and its image $u(A)\subset M$) by intersection pairing. 
The homology class is in fact determined by the positive cohomology assignment associated to $A$. First, given a $J$-holomorphic $1$-subvariety $\Theta=\{(C_i, m_i)\}$, there is a positive cohomology assignment for its support $C=|\Theta|=\cup C_i$. 
Let $C_i=\phi_i(\Sigma_i)$ where each $\Sigma_i$ is a compact connected complex curve and $\phi_i: \Sigma_i\rightarrow X$ is a $J$-holomorphic map embedding off a finite set. When $\sigma: D\rightarrow X$ is admissible, there is an arbitrarily small perturbation, $\sigma'$, of $\sigma$ which is homotopic to $\sigma$ through admissible maps and it is transverse to each $\phi_i$. Each fiber product $T_i:=\{(x, y)\in D\times \Sigma_i| \sigma'(x)=\phi_i(y)\}$ is a finite set of signed points of $D\times \Sigma$. We associate a weight $m_i$ to each signed point in $T_i$. The weighted sum of these signs in $\cup T_i$ is a positive cohomology assignment, denoted by $IS_{\Theta}$. 

Conversely, once a positive cohomology assignment $I$ is given as in Proposition \ref{pcaholo} and $C=\cup C_i$, we can associate the positive weight $m_i$ to $C_i$ as $I(\sigma)$ where $\sigma$ is a $J$-holomorphic disk intersecting transversey to $C_i$ at a smooth point. For the subvariety $\Theta=\{(C_i, m_i)\}$ obtained in this way, we have $I=IS_{\Theta}$. 

In our situation, the above construction gives rise to a $J$-holomorphic $1$-subvariety $\Theta$ of $M_1$ such that $|\Theta|=A=u^{-1}(Z_2)$ and $IS_{\Theta}(\sigma)=IC(\sigma)$. This subvariety will be called an {\it intersection subvariety} associated to $u$ later. Since  any homology class $\xi \in H_2(M_1, \mathbb Z)$ is representable by an embedded submanifold, the above claim just implies $\xi\cdot e_{\Theta}=u_*(\xi) \cdot [Z_2]$ as integers. Here $u_*(\xi)$ denotes the induced class in Borel-Moore homology $H_2^{BM}(M)$ and the latter product is understood as the intersection paring in Borel-Moore homology. The homology class $e_{\Theta}$ is determined by the intersection pairing with all the classes in $H_2(M_1, \mathbb Z)$. Since the latter product is determined only by the homology class of $Z_2$ and the homotopy class of the map $u$, we know $e_{\Theta}$ is the same as the homology class of the submanifold that is obtained by a perturbation of $u$ which is transverse to $Z_2$.

There are two important special cases. In the first, we assume the ambient manifold $M$ is closed.
\begin{prop}\label{closedM}
When $M^{2n}$ is a closed almost complex manifold, and $Z_1, Z_2$ are compact connected complex submanifolds of dimension $4$ and $2n-2$ respectively. Then the intersection $Z_1\cap Z_2$ is either one of $Z_i$, or supports a $J$-holomorphic $1$-subvariety of class $PD_{Z_1}^{-1}(\iota_{Z_1}^*(PD_M[Z_1]\cup PD_M[Z_2]))$ in $Z_1$ (and $PD_M^{-1}(PD_M[Z_1]\cup PD_M[Z_2])$ in $M$).
\end{prop}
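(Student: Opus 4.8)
The plan is to apply Theorem~\ref{ICdim4} (in the form of Corollary~\ref{eICdim4}) to the inclusion $\iota=\iota_{Z_1}\colon Z_1\hookrightarrow M$, viewed as a pseudoholomorphic map from the compact almost complex $4$-manifold $Z_1$ into $M$. Since $Z_1\cap Z_2=\iota^{-1}(Z_2)$ and $\iota(Z_1)=Z_1\nsubseteq Z_2$ exactly when the intersection is not $Z_1$, the theorem immediately yields that, whenever $Z_1\cap Z_2$ is neither $Z_1$ nor $Z_2$, it supports a $J$-holomorphic $1$-subvariety $\Theta$ with a well-defined homology class $e_\Theta\in H_2(Z_1;\mathbb Z)$. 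All that remains is to identify $e_\Theta$ and its pushforward to $M$.

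For the class in $Z_1$, I would invoke the description from Section~\ref{homology}: $e_\Theta$ coincides with the homology class of $v^{-1}(Z_2)$, where $v$ is a smooth perturbation of $\iota$ transverse to $Z_2$. This reduces the computation to the standard fact that the preimage of a closed submanifold under a transverse map is Poincar\'e dual to the pullback of its Poincar\'e dual, so that $PD_{Z_1}[v^{-1}(Z_2)]=v^{*}PD_M[Z_2]$; since $v$ is homotopic to $\iota$, this pullback depends only on $\iota$, giving $e_\Theta=PD_{Z_1}^{-1}\big(\iota^{*}PD_M[Z_2]\big)$. Working with the transverse-perturbation description rather than with the intersection pairing $\xi\cdot e_\Theta=\iota_{*}\xi\cdot[Z_2]$ alone is important here, since the pairing pins $e_\Theta$ down only modulo torsion, whereas $PD_{Z_1}$ and $\iota^{*}$ are isomorphisms/homomorphisms on all of cohomology.

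For the class in $M$, I would push $e_\Theta$ forward along $\iota_{*}\colon H_2(Z_1)\to H_2(M)$ and translate into cohomology. The key inputs are the compatibility of the Gysin map with Poincar\'e duality, $\iota_{!}\circ PD_{Z_1}=PD_M\circ\iota_{*}$, together with the projection formula $\iota_{!}(\iota^{*}\beta)=\beta\cup\iota_{!}(1)$ and the identity $\iota_{!}(1)=PD_M[Z_1]$. Taking $\beta=PD_M[Z_2]$ and combining with the previous paragraph gives
\[
PD_M(\iota_{*}e_\Theta)=\iota_{!}\big(\iota^{*}PD_M[Z_2]\big)=PD_M[Z_2]\cup PD_M[Z_1],
\]
which, by graded commutativity of even-degree classes, equals $PD_M[Z_1]\cup PD_M[Z_2]$. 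Hence the class in $M$ is $PD_M^{-1}\big(PD_M[Z_1]\cup PD_M[Z_2]\big)$, exactly as displayed; this computation also makes transparent that the factor $PD_M[Z_1]$ enters precisely through the projection formula upon pushing forward to $M$.

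I do not expect a serious obstacle: once Theorem~\ref{ICdim4} produces the subvariety and Section~\ref{homology} identifies $e_\Theta$ with a transverse count, the rest is a routine application of naturality of the Kronecker pairing and the projection formula. The only points deserving care are the torsion subtlety noted above and the bookkeeping of orientations, both of which are harmless in the almost complex setting, where every tangent space and normal bundle carries its canonical $J$-induced orientation.
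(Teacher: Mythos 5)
Your proposal is correct and follows essentially the same route as the paper: apply Theorem \ref{ICdim4} to the inclusion $\iota_{Z_1}$, then identify the homology class via a transverse perturbation together with standard intersection theory (the paper deforms $Z_2$ rather than perturbing $\iota_{Z_1}$, and pins down the class in $Z_1$ by pairing against all of $H_2(Z_1,\mathbb Z)$ rather than via the projection formula, but these are cosmetic differences). The one thing you should have flagged explicitly is that the class in $Z_1$ you actually derive, $PD_{Z_1}^{-1}(\iota_{Z_1}^*PD_M[Z_2])$, is not the formula displayed in the statement, which carries an extra factor $PD_M[Z_1]$ inside $\iota_{Z_1}^*$. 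Your version is the degree-correct one: $\iota_{Z_1}^*(PD_M[Z_1]\cup PD_M[Z_2])$ lies in $H^{2n-2}(Z_1)$, so its Poincar\'e dual in the $4$-manifold $Z_1$ lives in $H_{6-2n}(Z_1)$ rather than $H_2(Z_1)$ unless $n=2$; the same degree mismatch appears in the displayed identity of the paper's own proof. So the discrepancy points to a slip in the statement rather than in your argument, but a blind proof that silently establishes a formula different from the one asserted should say so rather than pass over it.
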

This is just a refined version of Corollary \ref{eICdim4} in the introduction when $M$ is closed. The map $\iota_{Z_i}: Z_i\rightarrow M$ is  the inclusion. 
\begin{proof}
For the first statement, we apply Theorem \ref{ICdim4} to the embedding $u: M_1\rightarrow M$, such that $Z_1=u(M_1)$. 

The homology class is calculated by deforming $Z_2$ to $Z_2'$ such that $Z_1\pitchfork Z_2'$. We denote the intersection subvariety of $Z_1$ and $Z_2$ by $\Theta$. By the above discussion, $e_{\Theta}=[Z_1\cap Z_2]$. The homology of the latter in $M$ is $PD_M^{-1}(PD_M[Z_1]\cup PD_M[Z_2])$ following from standard intersection theory of submanifolds.

For the homology in $Z_1$, it is determined by intersection pairing with all classes in $H_2(Z_1, \mathbb Z)$. For any $a\in H_2(Z_1, \mathbb Z)$, $$PD_{Z_1}(a)\cup \iota_{Z_1}^*(PD_M[Z_1]\cup PD_M[Z_2])=PD_M((\iota_{Z_1})_*a)\cup PD_M[Z_1]\cup PD_M[Z_2].$$
Hence the conclusion follows. 
\end{proof}

In particular, when one of the $[Z_i]$ equals zero, then the homology class of the intersection subvariety is zero. On the other hand, when $J$ is tamed by a symplectic form in $M$, then the homology class (in $M$ or $Z_1$) of the intersection subvariety is non-trivial. 

The other important special case of Theorem \ref{ICdim4} is the application to complex line bundles over a $4$-dimensional almost complex manifold. This is contained in Section \ref{Jsec}.

In the following, we show that Proposition \ref{closedM} could also be stated for our general settings of Theorem \ref{ICdim4}, although this calculation will not be used later in this paper. Again, we only need to calculate the homology class of intersection when $u$ is transverse to $Z_2$.  

In general, our ambient manifold $M$ is not assumed to be compact. Hence, our discussion will be under the Borel-Moore homology framework. For an overview, see \cite{Ful}. Borel-Moore homology could be defined using singular cohomology. If a space $X$ is embedded as a closed subspace of $\mathbb R^n$, then $$H_i^{BM}(X, \mathbb Z)=H^{n-i}(\mathbb R^n, \mathbb R^n\setminus X).$$ 

Each Borel-Moore $i$-cycle $C$ (and in turn its homology class) determines a linear map $H_c^i(X, \mathbb Z)\rightarrow \mathbb Z$. If $M$ is an oriented, connected, real $n$-manifold, then $H_n^{BM}(M, \mathbb Z)$ is freely generated by a fundamental class $[M]$. The Poincar\'e dual of the cycle $C$ is the cohomology class $\eta_C^M\in H^{m-k}(M, \mathbb Z)$ uniquely determined by the equality $$\int_M a\wedge \eta_C^M=C(a), \, \, \forall a\in H_c^k(M, \mathbb Z).$$ 

A closed oriented submanifold $Z\subset M$ of dimension $k$ defines a $k$-dimensional BM cycle $[Z]$ 
$$[Z](a):=\int_Z a, \, \, \forall a\in H_c^k(M).$$ The Poincar\'e dual is the cohomology class $\eta_S^M\in H^{m-k}(M)$ which is uniquely determined by $$\int_M a\wedge \eta_Z^M=\int_Z a, \, \, \forall a\in H_c^k(M).$$

When $M$ is a closed manifold, $\eta_S^M$ is just the usual Poincar\'e dual class in singular cohomology. If $M$ is the total space of an oriented vector bundle $E$ over $S$, then $\eta_S^M$ is the Thom class of $E$. 

Suppose we have an oriented closed submanifold $Z_2\subset M$, and a smooth map $u: M_1\rightarrow M$ where $M_1$ is a compact manifold. If $u\pitchfork Z_2$, we have $\eta_{u(M_1)\cap Z_2}^M=\eta_{u_*([M_1])}^M\cup \eta_{Z_2}^M$, and $\eta_{u^{-1}(Z_2)}^{M_1}=u^*(\eta_{u_*([M_1])}^M\cup \eta_{Z_2}^M)$.

\subsection{Higher dimensions}\label{dim>4}
Our argument for Theorem \ref{ICdim4} could also be applied to other cases with no excess intersection phenomenon, {\it i.e.} when $Z_2$ is a codimension $2$ compact connected almost complex submanifold in $(M, J)$ and $u: M_1\rightarrow M$ is  a pseudoholomorphic map  such that $u(M_1)\nsubseteq Z_2$ and $\dim_{\mathbb R} M_1>4$. We are able to prove the following

\begin{theorem}\label{ICdim>4}
Suppose $(M^{2n}, J)$ is an almost complex $2n$-dimensional manifold, and  $Z_2$ is a codimension $2$ compact connected almost complex submanifold. Let $(M_1, J_1)$ be a compact connected almost complex manifold of dimension $2k<2n$ and $u: M_1\rightarrow M$ a pseudoholomorphic map such that $u(M_1)\nsubseteq Z_2$. Then $u^{-1}(Z_2)\subset M_1$ is a closed set with finite $(2k-2)$-dimensional Hausdorff measure and a positive cohomology assignment.
\end{theorem}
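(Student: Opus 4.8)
The statement has two parts: that $A=u^{-1}(Z_2)$ has finite $(2k-2)$-dimensional Hausdorff measure, and that it carries a positive cohomology assignment. The second part is essentially formal and does not see the dimension of $M_1$, so the plan is to transcribe the construction of Proposition \ref{icpca} and to concentrate the real work on the measure estimate, which generalizes Proposition \ref{Hd2}. Note that, unlike Theorem \ref{ICdim4}, we do not claim $A$ is a subvariety: the concluding step would require the higher-dimensional analogue of Proposition \ref{pcaholo}, which is not available.

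For the positive cohomology assignment I would define $IC(\sigma)$ for an admissible $\sigma\colon D\rightarrow M_1$ exactly as before, by choosing an admissible perturbation $\sigma'$ of $u\circ\sigma$ transverse to $Z_2$ and summing the signs of the intersection points $\sigma'(D)\cap Z_2$; this is legitimate because $\dim D+\dim Z_2=2+(2n-2)=\dim M$, so transversality produces isolated signed points regardless of $\dim M_1$. The verification of Definition \ref{PCA}(1)--(5) is then verbatim that of Proposition \ref{icpca}: axioms (1)--(4) rest on the Boundary Theorem \ref{bd} and elementary intersection theory in $M$, while axiom (5) follows from Lemma \ref{posint} applied to the $J$-holomorphic curve $u\circ\sigma$ against the codimension-$2$ submanifold $Z_2$, the constant case being excluded by admissibility as before. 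None of these steps refers to the dimension of the domain.

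For the measure estimate I would run the argument of Proposition \ref{Hd2} with the higher-dimensional $J$-fiber-diffeomorphism furnished by Lemma \ref{diskfol}. Near each $x\in M_1$, Gaussian normal coordinates identify a neighborhood with a ball in $\mathbb{C}^k$ and yield a diffeomorphism $f\colon D\times U\rightarrow \mathcal N_x$ with $D\subset\mathbb{C}$ and $U\subset\mathbb{C}^{k-1}$, foliating $\mathcal N_x$ by $J_1$-holomorphic disks over the $(2k-2)$-real-dimensional base $U$. By compactness (Lemma \ref{compact}) it suffices to bound $\mathcal H^{2k-2}\big(A\cap f(\bar D\times\bar U)\big)$ for finitely many charts. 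For each $w$, the curve $u\circ f|_{D\times w}$ is $J$-holomorphic in $M$, so by Lemma \ref{posint} it either meets $Z_2$ in isolated points, whence $A\cap f(D\times w)$ is finite, or it lies in $Z_2$, a \emph{contained leaf}. Assuming no leaf is contained, the integer-valued $w\mapsto \#(A\cap f(\bar D_w))$ is upper semicontinuous and hence bounded by some $N$, and a Vitali covering together with the coarea formula finishes the estimate with the correct exponent: each leaf meets a union of disjoint $\ep$-balls $B_1,\dots,B_L$ in area at most $N\pi C^2\ep^2$, so integrating over the $(2k-2)$-dimensional base gives $L\ep^{2k}\lesssim \mathrm{vol}_{2k}(\cup_l B_l)\lesssim N\ep^2\,\mathrm{vol}_{2k-2}(U)$, whence $L\lesssim \ep^{-(2k-2)}$ and $\mathcal H^{2k-2}(A)<\infty$; the bookkeeping is identical to the four-dimensional case once one notes $2+(2k-2)=2k$.

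The main obstacle is precisely the hypothesis ``no contained leaf''. In dimension four one shows that only finitely many complex directions at $x$ carry a $J_1$-holomorphic curve inside $A$, using that a test disk transverse to an accumulation direction must meet the accumulating family; this fails verbatim when $k>2$, since two surfaces in a $2k$-manifold generically miss each other ($2+2-2k<0$) and $A$ may carry a whole continuum of tangent directions. The plan is instead to show that the ``bad'' directions form a measure-zero subset of $\mathbb{CP}^{k-1}$ and to choose the foliation direction, by the perturbative nature of the $J$-fiber-diffeomorphisms, in its complement, so that no leaf near the center is contained in $A$. The key input remains Proposition \ref{uniquecont}: a positive-measure family of contained leaves would sweep out an open subset of $M_1$ mapped into $Z_2$, forcing $u(M_1)\subset Z_2$, contrary to hypothesis. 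Making this selection fully rigorous before $A$ is known to be $(2k-2)$-dimensional is the delicate point, and I expect it to require a Fubini-type argument over the space of foliation directions combined with the unique-continuation obstruction.
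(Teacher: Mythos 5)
Your treatment of the positive cohomology assignment and of the coarea/Vitali bookkeeping matches the paper's proof, and your diagnosis is also correct: the only genuinely new difficulty in $\dim M_1=2k>4$ is ruling out leaves of the foliation contained in $A=u^{-1}(Z_2)$, and the four-dimensional trick (finitely many bad tangent directions, detected by a transverse test disk meeting an accumulating family) does not survive because two $J_1$-holomorphic disks in a $2k$-manifold need not intersect. But your proposed repair is both incomplete, as you acknowledge, and problematic as stated. A positive-measure (non-open) family of contained radial disks does \emph{not} sweep out an open subset of $M_1$, and Proposition \ref{uniquecont} as formulated requires $Y$ to contain the image of an \emph{open} subset of the domain; so the Fubini-over-directions contradiction you sketch does not close. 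Worse, for the parallel foliations the ``bad directions'' are not even naturally tied to a single swept-out set: a contained leaf for direction $\kappa$ and one for direction $\kappa'$ need not assemble into anything to which unique continuation applies.

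The paper's resolution is much simpler and needs only \emph{one} good direction, not almost all of them. Take the radial family $f_0\colon D\times U\rightarrow\mathbb{C}^k$ of Lemma \ref{diskfol} with $w=0$ and $\kappa$ varying over $U\subset\mathbb{CP}^{k-1}$ (essentially Lemma 6.1 of \cite{Ush}): each $f_0(D_\kappa)$ is an embedded $J_1$-holomorphic disk through $x$ tangent to $l_\kappa$, and $f_0$ restricted to $(D\setminus\{0\})\times U$ is a diffeomorphism onto its image, which is therefore open. If \emph{every} $f_0(D_\kappa)$ lay in $A$, this open set would lie in $A$ and Proposition \ref{uniquecont} would force $u(M_1)\subset Z_2$, contrary to hypothesis. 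Hence some $\kappa$ gives a disk meeting $A$ in a finite set (Lemma \ref{posint} plus compactness), and one then builds the parallel foliation $f$ in that single direction; shrinking $\rho$ so that the boundary circles $f(\partial D_w)$ avoid the closed set $A$, no leaf can be contained in $A$ and each meets it in a uniformly bounded finite number of points, after which your coarea estimate applies verbatim. Replacing your Fubini step by this ``all directions bad $\Rightarrow$ open set in $A$'' dichotomy closes the gap.
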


The following question asks for a generalization of Proposition \ref{pcaholo}. 
\begin{question}\label{pcaholoco2}
Let $(X, J)$ be a $2k$-dimensional almost complex manifold and let $C\subset X$ be a closed set with finite $(2k-2)$-dimensional Hausdorff measure and a positive cohomology assignment. Does $C$ support a compact $J$-holomorphic subvariety of complex dimension $k-1$?
\end{question}

If the answer to Question \ref{pcaholoco2} is affirmative, then we know the set $u^{-1}(Z_2)$ in Theorem \ref{ICdim>4} is a $J_1$-holomorphic subvariety. 

The proof of Theorem \ref{ICdim>4} is almost identical to that of Theorem \ref{ICdim4}. However, we need the following lemma, whose proof closely follows from arguments in section 5 of \cite{T}. A similar result can be found in \cite{Ush}. For completeness, we include its proof.

\begin{lemma}\label{diskfol}
Let $J_1$ be an almost complex structure on $\mathbb C^n$ which agrees with the standard almost complex structure $J_0$ at the origin. Choose an almost Hermitian metric $g$ compatible with $J_1$. There exists a constant $\rho_0$ with the following property. Let $\rho<\rho_0$ and let $U$ be the ball of radius $\rho$ in $\mathbb C^{n-1}$ and $D\subset \mathbb C$ the disk of radius $\rho$. Then there is a diffeomorphism $f: D\times U\rightarrow \mathbb C^n$, and constants $L, L_m$ depending only on $g$ and $J_1$, such that
\begin{itemize}
 \item For all $w\in U$, $f(D_{w})$ is a $J_1$-holomorphic submanifold containing $(0, w)$. 
\item For all $w\in U$, dist$((\xi, w); f(\xi, w))\le L\cdot \rho\cdot |\xi|$. 
\item For all $w\in U$, the derivatives of order $m$ of $f$ are bounded by $L_m\cdot \rho$. 
\item For any $\kappa\in \mathbb CP^{n-1}$, we can choose $f(D_{0})$ such that it is tangent at the origin to the line $l_{\kappa}\subset \mathbb C^n$ determined by $\kappa$.
\end{itemize}
\end{lemma}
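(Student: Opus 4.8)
The plan is to follow the scheme of Section~5 of \cite{T}, building the $J_1$-holomorphic disks as solutions of a nonlinear Cauchy--Riemann equation obtained by perturbing the standard complex-affine foliation $\{D_w\}$ of $\mathbb{C}^n$, and to make all the constants uniform through a rescaling that exploits $J_1(0)=J_0$. Concretely, since $J_1$ is smooth and agrees with $J_0$ at the origin, on the ball of radius $\rho$ one has $|J_1(z)-J_0|\le C|z|$ together with the analogous bounds on higher derivatives. Applying the dilation $z\mapsto z/\rho$ and setting $\hat J_1(z):=J_1(\rho z)$ transports the problem to the \emph{unit} polydisk $D\times U$, on which $\hat J_1-J_0$ is $O(\rho)$-small in every $C^m$ norm. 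It therefore suffices to construct the foliation for an almost complex structure that is $C^m$-close to $J_0$ and then dilate back; the advertised distance and derivative estimates, with their explicit factors of $\rho$, are recovered by unwinding the dilation, and the factor $|\xi|$ in the distance bound reflects the normalization $\eta(0)=0$ below.

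Next I would set up the fixed-point problem. For fixed $w\in U$, seek the disk through $(0,w)$ as a graph $\phi_w(\xi)=(\xi,w)+\eta_w(\xi)$ with $\eta_w(0)=0$. Writing $\bar\partial_{J_0}$ for the standard Cauchy--Riemann operator and using that $(\xi,w)$ is $J_0$-holomorphic, the equation $\bar\partial_{\hat J_1}\phi_w=0$ becomes an inhomogeneous equation $\bar\partial_{J_0}\eta_w=\mathcal{N}(\eta_w,w)$, where $\mathcal{N}$ gathers the terms proportional to $\hat J_1-J_0$ and is consequently $O(\rho)$-small together with its first variation in $\eta_w$. Composing with a bounded right inverse $\bar\partial_{J_0}^{-1}$ (the Cauchy--Pompeiu transform, normalized so that its output vanishes at $0$), the equation becomes a fixed-point problem $\eta_w=\bar\partial_{J_0}^{-1}\mathcal{N}(\eta_w,w)$ on a small ball of $C^{1,\alpha}(D;\mathbb{C}^n)$. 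The $O(\rho)$-smallness makes the right-hand side a contraction, so for $\rho<\rho_0$ there is a unique small solution $\eta_w$, depending smoothly on $w$ by the implicit function theorem. After the inverse dilation, $(\xi,w)\mapsto\phi_w(\xi)$ is the desired map $f$; its $C^1$-closeness to the identity, together with smooth dependence on $w$, shows that $f$ is a diffeomorphism onto its image, so the disks $f(D_w)$ genuinely foliate a neighborhood.

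For the final bullet, observe that $J_1(0)=J_0$ forces the tangent line at the origin of any $J_1$-holomorphic disk through $0$ to be a $J_0$-complex line. To reach a prescribed $l_\kappa$, I would first apply a unitary change of coordinates of $\mathbb{C}^n$, which preserves both $J_0$ and the dilation, carrying $l_\kappa$ to the first complex axis; running the construction above produces $f(D_0)$ as a graph over that axis with $\eta_0'(0)$ prescribed to lie along it, so that $f(D_0)$ is $J_1$-holomorphic and tangent at the origin to $l_\kappa$.

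The main obstacle I expect is the quantitative bookkeeping: proving that $\mathcal{N}$ is a contraction whose image size and Lipschitz constant are both genuinely $O(\rho)$ on the rescaled problem, which rests on elliptic (Calder\'on--Zygmund type) bounds for $\bar\partial_{J_0}^{-1}$ that are uniform in the rescaling, and then correctly unwinding the dilation to produce the stated $\rho$- and $|\xi|$-dependence. Verifying that $f$ is a genuine diffeomorphism---that the disks are pairwise disjoint and sweep out an open set---is the other point demanding care, but it follows once the $C^1$-closeness to the standard foliation is established.
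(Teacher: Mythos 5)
Your proposal follows essentially the same route as the paper: both construct the disks as graph-type perturbations of the standard affine foliation, convert the nonlinear Cauchy--Riemann equation into a fixed-point problem via the Cauchy transform, solve it by the contraction mapping principle using the smallness of $J_1-J_0$ near the origin, and invoke the implicit function theorem to see that $f$ is a diffeomorphism. Your rescaling to the unit polydisk and the unitary rotation for the tangency condition are only cosmetic reorganizations of the paper's $\rho$-weighted norms, cutoff function, and $\kappa$-parametrized ansatz.
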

\begin{proof}
We search for $J_1$-holomorphic disks which are perturbations of a $J_0$-holomorphic disk $(\xi, w_1+\kappa_1\xi, \cdots, w_{n-1}+\kappa_{n-1}\xi)$ where $w=(w_1, \cdots, w_{n-1})\in D^{n-1}$ and $\kappa=[1:\kappa_1:\cdots : \kappa_{n-1}]$. These disks could be expressed as $$q_{w, \kappa}(\xi)=(\xi, w_1+\kappa_1\xi+\tau_1(w, \kappa, \xi), \cdots, w_{n-1}+\kappa_{n-1}\xi+\tau_{n-1}(w, \kappa, \xi)),$$ whose $J_1$-holomorphic equations are $$\frac{\partial\tau_i}{\partial \bar \xi}=Q_i(w, \kappa, \tau_i(w, \kappa, \xi), \cdots, \tau_{n-1}(w, \kappa, \xi)),$$ such that 
\begin{equation}\label{Qsmall}
||Q_i||_{C^k}\le C_k||J_1-J_0||_{C^k(D^n)}.
\end{equation}

Now introduce a cutoff function $\chi_{\rho}:\mathbb C\rightarrow [0, 1]$ which equals $1$ for $|\xi|<\rho$ and $0$ for $|\xi|>\frac{3\rho}{2}$, and search for a solution to $$\frac{\partial\tau_i}{\partial \bar \xi}=\chi_{\rho}Q_i, \,\,\, i=1, \cdots, n-1.$$ The search is on the class of $(n-1)$-tuples of $C^{2, \frac{1}{2}}$ functions $\tau_i$ restricting to the circle of radius $4\rho$ around zero in the span of functions $\{e^{ik\theta}|k<0\}$, obeying $$\tau_i(\xi)=\frac{1}{\pi}\int\frac{\chi_{\rho}Q_i(w, \kappa, \tau_i(c, \kappa, \xi))}{\xi-\eta}d^2\eta, \, \, i=1, \cdots, n-1.$$ This class of functions is a Banach space using the following norm for $\tau=(\tau_1, \cdots, \tau_{n-1})$ (it is the $(n-1)$-fold direct sum of the norm used on page 886 of \cite{T}) $$||\tau||=\sum_{i=1}^{n-1}\sup_{t, s\in \mathbb C}(|\tau_i|+\rho\cdot |d\tau_i|+\rho^2\cdot |\nabla d\tau_i|+\rho^{\frac{5}{2}}\cdot \frac{|\nabla d(\tau_i)_t-\nabla d(\tau_i)_s|}{|t-s|^{\frac{1}{2}}}).$$  Applying the contractive mapping theorem to this Banach space, thanks to inequality \eqref{Qsmall}, as in Lemma 5.5 of \cite{T} the solution varies smoothly in each of $\xi, c$ and $\kappa$, and satisfies the bounds $$|\frac{\partial \tau}{\partial w_i}|<C\rho, \, \, |\frac{\partial \tau}{\partial \kappa_i}|<C\rho^2, \, \, ||\tau||_{C^0}<C(\rho^2+\rho(|w|+|\kappa|)), $$ $$||\tau||_{C^1}<C(\rho+|w|+|\kappa|).$$

Then  the lemma follows from the above discussion for a constant $\kappa$. Indeed, there exists $\epsilon>0$ with the property that when $|w|<\epsilon$, there is a unique small solution $\tau_w$ for the given constant $\kappa$. The corresponding map $q_w:=q_{w, \kappa}$ is then pseudoholomorphic. As the pair $(\xi, w)\in \mathbb C\times \mathbb C^{n-1}$ vary, $f(\xi, w):=q_w(\xi)$ defines a map from a neighborhood of the origin in $\mathbb C^n$ to $\mathbb C^n$. The implicit function theorem asserts that $f$ is a diffeomorphism on some neighborhood of $0\in \mathbb C^n$ if its differential at $0$ is invertible. This will be the case if $|\frac{\partial \tau_w}{\partial w}|<1$ at $(\xi, w)=0$.  The latter inequality is insured when $\rho$ is small. 

The above argument is for the affine plane $[1:\kappa_1: \cdots :\kappa_{n-1}]$, but certainly it works also for other affine planes $[\kappa_1: \cdots: 1: \cdots: \kappa_{n-1}]$. 
\end{proof}

Then Theorem \ref{ICdim>4} follows from the same argument as for Theorem \ref{ICdim4}.

\begin{proof}(of Theorem \ref{ICdim>4}). 
By Lemma \ref{compact}, $A=u^{-1}(Z_2)$ is a closed set. 

To show the $(2k-2)$-dimensional Hausdorff measure $\mathcal H^{2k-2}(A)$ is finite, we follow the argument of Proposition \ref{Hd2}, but instead using Lemma \ref{diskfol}. First, for any point $x\in M_1$, the complex directions of $T_xM_1$ are parametrized by $\mathbb CP^{k-1}$. We choose a Gaussian normal coordinate such that a neighborhood of $x$ in $M_1$ is identified with a neighborhood of the origin in $\mathbb C^k$ and we are in the situation of Lemma \ref{diskfol}. Now we want to find a suitable complex direction $\kappa$, such that none of the $J_1$-holomorphic disks $f(D_{w})$ in Lemma \ref{diskfol} are contained in $u^{-1}(Z_2)$. 

If we take $w=0$ and vary $\kappa$ in the proof of Lemma \ref{diskfol}, our construction would provide a smooth map $f_0: D\times U\rightarrow \mathbb C^k$, such that each $f_0|_{D_{\kappa}}$, $\kappa\in U\subset \mathbb CP^{k-1}$, is an embedding whose image is a $J_1$-holomorphic disk which is tangent at the origin to the line $l_{\kappa}\subset \mathbb C^k$ determined by $\kappa$. Moreover, $f_0$ maps the zero section $\{0\}\times U$ to $0\in \mathbb C^n$ and $f_0|_{(D\setminus \{0\})\times U}$ is a diffeomorphism onto its image by implicit function theorem. This is essentially Lemma 6.1 in \cite{Ush}.

Now, for some $\kappa\in U$, $f_0(D_{\kappa})$ are not in $u^{-1}(Z_2)$, otherwise the open set $f_0(D\times U)$ is contained in $u^{-1}(Z_2)$, which contradicts  our assumption that $u(M_1)\nsubseteq Z_2$ by Proposition \ref{uniquecont}. Moreover, for this $\kappa$, $f_0(D_{\kappa})\cap u^{-1}(Z_2)$ is a finite set by Lemma \ref{posint}. Then we choose this $\kappa$ to construct our $f$ in Lemma \ref{diskfol}. For $\rho$ small enough, it gives a diffeomorphism $f: D\times U\rightarrow \mathbb C^k$ such that for each $w\in U$, $f(D_{w})$ intersects  $u^{-1}(Z_2)$ only at finitely many points. Thus by the same coarea formula argument as in Proposition \ref{Hd2}, we know $\mathcal H^{2k-2}(A)<\infty$. 

Finally, the construction $IC(\sigma)$ of Proposition \ref{icpca} again defines a positive cohomology assignment to $A=u^{-1}(Z_2)$.
\end{proof}

\section{Pseudoholomorphic sections of complex line bundles}\label{Jsec}
 Let $(M, J)$ be an almost complex manifold and $\pi: E\rightarrow M$ a complex vector bundle over it. In most of our discussions in this section, $\dim_{\mathbb R} M=4$ and $E$ is a complex line bundle. If the total space of $E$ is endowed with an  almost complex structure $\mathcal J$, a $(J, \mathcal J)$-holomorphic (or pseudoholomorphic) section of the bundle $E$ is a smooth map $s: M\rightarrow E$ such that $\pi\circ s=id_M$ and $\mathcal Jds=dsJ$. Equivalently, $s$ is a smooth section of $E$ such that the image $s(M)$ is a $\mathcal J$-holomorphic submanifold.

We know the set of equivalence classes of complex line bundles over $M$ is the same as $H^2(M, \mathbb Z)$. And the total space of the bundle could be associated with different almost complex structures. See the later discussion for the canonical bundle. If $J$ is a complex structure and $E$ is a holomorphic line bundle, then the zero locus of any holomorphic section is a divisor. Moreover, for every divisor $D\subset M$, there is a holomorphic line bundle $E$ over $M$ and a holomorphic section $s$ such that $s^{-1}(0)=D$. The following is what we know for almost complex $4$-manifolds.  
\begin{prop}\label{Hbun}
Suppose $(M, J)$ is a closed almost complex $4$-manifold. 
Then the zero locus of any nontrivial pseudoholomorphic section of any complex line bundle $E$ with any almost complex structure on the total space extending $J$ supports a $J$-holomorphic subvariety in $(M, J)$ in class $PD(c_1(E))$. The subvariety is uniquely determined. 
\end{prop}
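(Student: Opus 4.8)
The plan is to realize the zero locus as an intersection of two almost complex submanifolds inside the total space of $E$ and then invoke Theorem \ref{ICdim4}. Write $\mathcal E\to M$ for the total space of the complex line bundle, equipped with the given almost complex structure $\mathcal J$ extending $J$. Since $\mathcal J$ extends $J$, the image $Z_0\cong M$ of the zero section is a $\mathcal J$-holomorphic submanifold of $\mathcal E$; it is compact and connected because $M$ is closed (and, after treating the connected components separately if necessary, connected), and it has real codimension $2$ because the fibre of $E$ is a single copy of $\mathbb C$. By definition a pseudoholomorphic section is a $(J,\mathcal J)$-holomorphic map $s\colon M\to\mathcal E$ with $\pi\circ s=\mathrm{id}_M$, so $s$ is exactly a pseudoholomorphic map from the compact connected almost complex $4$-manifold $(M,J)$ into $\mathcal E$. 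The section being nontrivial means $s\not\equiv 0$, which is equivalent to $s(M)\nsubseteq Z_0$.

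With this dictionary in place, I would apply Theorem \ref{ICdim4} with ambient manifold $\mathcal E$, codimension $2$ submanifold $Z_2=Z_0$, source $M_1=M$, and map $u=s$. Note that $\mathcal E$ is noncompact, but Theorem \ref{ICdim4} only requires $Z_2$ to be compact, which holds here. The theorem then yields that $s^{-1}(Z_0)=s^{-1}(0)$ supports a $J$-holomorphic $1$-subvariety $\Theta$ of $M$. This is the main content of the statement, and essentially the only thing to check is that the set-up genuinely meets every hypothesis of Theorem \ref{ICdim4}; I expect the one point deserving care is the verification that $Z_0$ is an almost complex submanifold, i.e. that $TZ_0$ is $\mathcal J$-invariant, which is precisely what ``$\mathcal J$ extends $J$'' should encode.

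For the homology class I would use the computation of Section \ref{homology}. There the class $e_\Theta$ of the intersection subvariety is shown to equal the homology class of $(s')^{-1}(Z_0)$ for any smooth perturbation $s'$ of $s$ that is transverse to $Z_0$, and this class depends only on the homotopy class of $s$ and on $[Z_0]$. Choosing $s'$ to be a smooth section transverse to the zero section, $(s')^{-1}(0)$ is a smooth surface representing the Poincar\'e dual of the Euler class of the normal bundle of $Z_0$, which is canonically $E$. Hence $e_\Theta=PD(e(E))=PD(c_1(E))$, using $e(E)=c_1(E)$ for a complex line bundle.

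Finally, uniqueness follows from the correspondence between $1$-subvarieties and positive cohomology assignments recorded in Section \ref{homology}. The support $|\Theta|=s^{-1}(0)$ is intrinsic, as is its decomposition into irreducible components $C_i$. The multiplicity $m_i$ attached to $C_i$ is recovered as $IC(\sigma)$ for a $J$-holomorphic disk $\sigma$ meeting $C_i$ transversally at a smooth point, where $IC$ is the canonical positive cohomology assignment built in Section \ref{pca} from the intersection number of $u\circ\sigma$ with $Z_0$; this assignment is independent of all auxiliary choices. Since $\Theta$ is then the unique $1$-subvariety with support $s^{-1}(0)$ and positive cohomology assignment $IS_\Theta=IC$, the subvariety is uniquely determined. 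The main obstacle, if there is one, is thus purely bookkeeping: ensuring the almost complex structure on the total space renders the zero section $\mathcal J$-holomorphic so that Theorem \ref{ICdim4} applies verbatim.
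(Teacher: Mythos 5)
Your proposal is correct and follows essentially the same route as the paper: apply Theorem \ref{ICdim4} inside the total space with $Z_2$ the image of the zero section and $u=s$ (the paper phrases this as intersecting the two $\mathcal J$-holomorphic submanifolds $s(M)$ and $0(M)$, which is the same thing since $s$ is an embedding), then read off the class and the uniqueness from the positive-cohomology-assignment discussion of Section \ref{homology}. Your extra remarks — that noncompactness of the total space is harmless and that ``$\mathcal J$ extends $J$'' is exactly what makes the zero section $\mathcal J$-holomorphic — are accurate and match the paper's implicit assumptions.
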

We call such a subvariety the zero divisor of the section $s$. 
\begin{proof}
Suppose the total space of the complex line bundle $E$ admits an almost complex structure $\mathcal J$ such that $\mathcal J|_M=J$. The images of the zero section $0(M)$ and nontrivial section $s(M)$ are both $\mathcal J$-holomorphic submanifolds. Hence, by Theorem \ref{ICdim4}, their intersection which is the zero locus $|s^{-1}(0)|$ supports a $J$-holomorphic subvariety of $M$.
 As explained in section \ref{homology}, such a $J$-holomorphic subvariety could be chosen in the homology class of the zero locus of a smooth transverse section. This homology class is Poincar\'e dual to $c_1(E)$.
 
Moreover, this subvariety is uniquely determined as argued in Section \ref{homology}. That is because the positive cohomology assignment given by the sections $s(M)$ and $0(M)$ determines the subvariety supported on $|s^{-1}(0)|$.
\end{proof}

\begin{cor}\label{nosec}
If $(M^4, J)$ does not have $J$-holomorphic subvarieties in a class $a\in H_2(M, \mathbb Z)$, then any complex line bundle whose Chern class is $PD(a)$ does not admit nontrivial pseudoholomorphic sections for any almost  complex structure on the total space whose restriction to $M$ is $J$.
\end{cor}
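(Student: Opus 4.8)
The plan is to argue by contraposition and reduce immediately to Proposition \ref{Hbun}. Concretely, I would assume the conclusion fails: suppose there is a complex line bundle $E\rightarrow M$ with $c_1(E)=PD(a)$ admitting a nontrivial pseudoholomorphic section $s$, for \emph{some} almost complex structure $\mathcal J$ on the total space of $E$ whose restriction to $M$ is $J$. My goal is then to manufacture a $J$-holomorphic subvariety in class $a$, contradicting the hypothesis that $(M^4,J)$ carries no such subvariety.

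The key step is to feed this hypothetical section directly into Proposition \ref{Hbun}. Since $s$ is nontrivial, its image $s(M)$ is not the zero section $0(M)$, so the two $\mathcal J$-holomorphic submanifolds of the total space satisfy the noninclusion hypothesis of Theorem \ref{ICdim4}; Proposition \ref{Hbun} then guarantees that the zero locus $|s^{-1}(0)|$ supports a (uniquely determined) $J$-holomorphic subvariety $\Theta$ whose homology class is $PD(c_1(E))$. The only bookkeeping to carry out is the Poincar\'e duality identity: because $M$ is a closed oriented $4$-manifold, the two maps $PD\colon H_2(M,\mathbb{Z})\rightarrow H^2(M,\mathbb{Z})$ and $PD\colon H^2(M,\mathbb{Z})\rightarrow H_2(M,\mathbb{Z})$ are mutually inverse, so that
\[
PD(c_1(E))=PD(PD(a))=a.
\]
Hence $\Theta$ is a $J$-holomorphic subvariety in class $a$, which contradicts the standing assumption and finishes the argument.

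There is essentially no hard analytic step here, since all of the work is done by Proposition \ref{Hbun} (and behind it Theorem \ref{ICdim4}); the corollary is a formal consequence. The one point requiring a word of care is the case $a=0$: if $a=0$ the hypothesis is vacuous, as the empty subvariety already represents the zero class, so one need only treat $a\neq 0$, where $PD(c_1(E))=a\neq 0$ forces $s$ to vanish somewhere and the produced $\Theta$ is genuinely nonempty. Thus the main (and only real) obstacle is simply to confirm that the hypotheses of Proposition \ref{Hbun} are met by an arbitrary nontrivial pseudoholomorphic section and to keep the Poincar\'e duality conventions consistent; once these are checked, the contradiction is immediate.
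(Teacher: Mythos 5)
Your argument is correct and is exactly the paper's proof: the corollary is the contrapositive of Proposition \ref{Hbun}, which produces from any nontrivial pseudoholomorphic section a $J$-holomorphic subvariety in class $PD(c_1(E))=a$. The extra remarks on Poincar\'e duality conventions and the $a=0$ case are harmless bookkeeping and do not change the route.
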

\begin{proof}
By Proposition \ref{Hbun}, there is a $J$-holomorphic subvariety representing the homology class $PD(c_1(E))$ when there is a  nontrivial pseudoholomorphic section.
\end{proof}

A generic tamed almost complex structure on a $4$-torus or a K3 surface does not have any pseudoholomorphic curves. In fact, a generic almost complex structure on a $4$-manifold $(M, J)$ does not admit any non-constant pseudoholomorphic function even locally. Hence, a generic pseudoholomorphic $1$-subvariety is not the zero locus of any complex line bundle over $M$. Since  pseudoholomorphic $1$-subvarieties in a $4$-dimensional (tamed) almost complex manifold $(M, J)$ are the generalization of the notion of Weil divisor, it just says that, in general, a Weil divisor is not a Cartier divisor in an almost complex manifold.

In the following, we study in detail a particularly interesting line bundle, the canonical bundle. When the base dimension is $4$, the suitable generalization of canonical bundle to the almost complex setting is the bundle of $J$-anti-invariant $2$-forms $\Lambda_J^-$. We recall the definition. The almost complex structure acts on the bundle of real 2-forms $\Lambda^2$ as an involution, by $\alpha(\cdot, \cdot) \rightarrow \alpha(J\cdot,
J\cdot)$. This involution induces the splitting into $J$-invariant, respectively,
$J$-anti-invariant 2-forms
$$\Lambda^2=\Lambda_J^+\oplus \Lambda_J^-$$
corresponding to the eigenspaces of eigenvalues $\pm 1$ respectively.

The bundle $\Lambda_J^-$ has (real) rank $2$. It inherits a complex structure, also denoted by $J$, given by $J\phi(X, Y)=-\phi(JX, Y)$. Hence $\Lambda_J^-$ is a complex line bundle over $M$. Moreover, we can calculate its Chern class.

\begin{prop}\label{c1=k}
The first Chern class of the complex line bundle $\Lambda_J^-$ over $(M^4, J)$ is the canonical class $K_J$ of the almost manifold $(M, J)$.
\end{prop}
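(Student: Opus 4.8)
The plan is to work in the complexified exterior algebra and reduce the statement to a bundle isomorphism $(\Lambda_J^-, J)\cong \Lambda^{2,0}$, after which the Chern class computation is automatic. Complexifying the involution $\alpha\mapsto \alpha(J\cdot,J\cdot)$ on $\Lambda^2T^*M\otimes\mathbb{C}$ and using the type decomposition $\Lambda^2T^*M\otimes\mathbb{C}=\Lambda^{2,0}\oplus\Lambda^{1,1}\oplus\Lambda^{0,2}$, one checks that the $+1$-eigenbundle $\Lambda_J^+$ complexifies to the real $(1,1)$-forms $\Lambda^{1,1}$, while the $-1$-eigenbundle complexifies as $\Lambda_J^-\otimes_{\mathbb{R}}\mathbb{C}\cong \Lambda^{2,0}\oplus\Lambda^{0,2}$. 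Since the first Chern class of a real rank-two bundle equipped with a complex structure is by definition the $c_1$ of its $(+i)$-eigenbundle inside the complexification, the whole proposition comes down to deciding which of $\Lambda^{2,0}$, $\Lambda^{0,2}$ is the $(+i)$-eigenbundle of the operator $J$ defined by $J\phi(X,Y)=-\phi(JX,Y)$.

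I would then carry out this eigenvalue computation explicitly. Extending $J$ complex-linearly to $\Lambda_J^-\otimes\mathbb{C}$ and writing a decomposable $(2,0)$-form as $\psi=\theta_1\wedge\theta_2$ with each $\theta_i$ of type $(1,0)$, the defining relation for $(1,0)$-forms gives $\psi(JX,Y)=-i\,\psi(X,Y)$, whence $J\psi=-\psi(J\cdot,\cdot)=i\,\psi$; applying complex conjugation shows $J$ acts as $-i$ on $\Lambda^{0,2}$. Thus $\Lambda^{2,0}$ is the $(+i)$-eigenbundle, and the projection $\phi\mapsto \psi$ onto the $(2,0)$-part (which already recovers the whole real anti-invariant form) is a complex-linear isomorphism $(\Lambda_J^-,J)\cong \Lambda^{2,0}$. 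Since $\Lambda^{2,0}=K_J$ is the canonical bundle of $(M,J)$, taking first Chern classes yields $c_1(\Lambda_J^-)=c_1(\Lambda^{2,0})=K_J$, as claimed.

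As a consistency check I would also verify the isomorphism through transition functions: fixing any $J$-compatible Hermitian metric and a local unitary coframe $e^1,e^2$ of $(1,0)$-forms, the form $e^1\wedge e^2$ trivializes $\Lambda^{2,0}$, and its real and imaginary parts trivialize the real rank-two bundle $\Lambda_J^-$; under a change of unitary coframe, $e^1\wedge e^2$ transforms by the determinant of the $\mathrm{U}(2)$-valued transition function, which is precisely the cocycle defining $K_J$. This gives the same identification without ever expanding the eigenvalue computation.

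The genuinely delicate point, and the only real obstacle, is bookkeeping the sign conventions. The value of $J$ on $(2,0)$-forms, hence whether the natural projection lands in $\Lambda^{2,0}=K_J$ or in its conjugate $\Lambda^{0,2}=K_J^{-1}$, is governed by the convention fixing $T^{1,0}$ as the $(\pm i)$-eigenbundle of $J$ together with the dual convention $\theta(JX)=\pm i\,\theta(X)$ for $(1,0)$-forms; an inconsistent choice would flip the answer to $-K_J$. I expect the entire content of the proof to be pinning down one coherent convention so that $\Lambda^{2,0}$ is identified with the canonical bundle and receives the $(+i)$-eigenvalue; once that is fixed, the topology contributes nothing further and the identity $c_1(\Lambda_J^-)=K_J$ follows immediately.
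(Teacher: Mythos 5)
Your argument is correct in substance but follows a genuinely different route from the paper. The paper proves the statement by Chern--Weil theory: it fixes a compatible metric and an almost Hermitian connection $\nabla$, writes the connection $1$-forms $\theta_i^j$ in a local unitary frame $\{e_1,e_2\}$ of $T'M$, and observes that the induced connection on the complexified dual of $\Lambda_J^-$ (locally generated by $e_1\wedge e_2$) has curvature $d(\theta_1^1+\theta_2^2)=\mathrm{tr}(\Omega)$, whence $c_1\bigl((\Lambda_J^-)^*\bigr)=c_1(TM,J)$ and $c_1(\Lambda_J^-)=K_J$. You instead establish the underlying bundle isomorphism $(\Lambda_J^-,J)\cong\Lambda^{2,0}$ directly from the type decomposition and an eigenvalue computation, after which the Chern class identity is formal. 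The two arguments rest on the same identification of $\Lambda_J^-$ with the real part of $(2,0)$-forms (which the paper also records in the remark closing Section~5, citing \cite{LZ}); what the Chern--Weil route buys is the explicit connection and curvature forms on $\Lambda_J^-$, which the paper reuses immediately afterwards to define the almost complex structure $\mathcal J_\nabla$ on the total space --- this is why the author remarks that ``the calculation is useful for later discussions.'' Your route is shorter and more conceptual if all one wants is the Chern class.

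One concrete warning on the point you yourself flag as the crux. As written, your eigenvalue computation uses $\theta(JX)=-i\,\theta(X)$ for $(1,0)$-forms. If $T^{1,0}=T'M$ is taken to be the $+i$-eigenspace of $J$ on $T^{\mathbb C}M$ --- the convention under which $\Lambda^{2,0}=\det(T^{1,0})^*$ is the bundle with $c_1=-c_1(TM,J)=K_J$ --- then the dual relation is $\theta(JX)=+i\,\theta(X)$, and the same computation yields $J\psi=-\psi(J\cdot,\cdot)=-i\psi$ on $\Lambda^{2,0}$, i.e.\ the opposite eigenvalue, which would land you on $\Lambda^{0,2}$ and the class $-K_J$. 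So the two conventions (which eigenspace of $J$ is $T^{1,0}$, and the dual pairing for $(1,0)$-forms) must be fixed coherently on both sides of the identification; your transition-function check does not by itself resolve this, since it only identifies the unoriented real rank-two bundles unless you also verify that the orientation $(\phi, J\phi)$ of $\Lambda_J^-$ matches the complex orientation of $\Lambda^{2,0}$ rather than of its conjugate. Once that single sign is pinned down, your proof is complete and agrees with the paper's conclusion.
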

\begin{proof}
The conclusion should be well known. We offer a proof by Chern-Weil theory. The calculation is useful for later discussions.

Let $g$ be a Riemannian metric compatible with $J$, {\it i.e.} $g(JX, JY)=g(X, Y)$. We call the triple $(M, J, g)$ an almost Hermitian manifold. Denote the complexified tangent space by $T^{\mathbb C}M=TM\otimes \mathbb C$. The complexified tangent space can be decomposed by as $T^{\mathbb C}M=T'M\oplus T''M$ where $T'M$ and $T''M$ are the eigenspaces of $J$ corresponding to eigenvalues $\sqrt{-1}$ and $-\sqrt{-1}$ respectively.  Choose a local unitary frame $\{e_1, e_2\}$ for $T'M$ with respect to the Hermitian inner product induced from $g$. 

We choose an  almost Hermitian connection $\nabla$ on $(M, J, g)$, {\it i.e.} $\nabla J=\nabla g=0$, which exists on every almost Hermitian manifold. There is a matrix of complex valued $1$-forms $\{\theta_i^j\}$, called the connection $1$-forms, such that $\nabla e_i=\theta_i^j e_j$. The curvature $\Omega=\{\Omega_i^j\}$ is defined by $$\Omega_i^j=d\theta_i^j+\theta_k^j\wedge \theta_i^k.$$ By Chern-Weil theory $c_1(TM, J)=[\frac{\sqrt{-1}}{2\pi}tr(\Omega)]$. In local coordinates, $$tr(\Omega)=d\theta_1^1-\theta_2^1\wedge\theta_1^2+d\theta_2^2-\theta_1^2\wedge\theta_2^1=d\theta_1^1+d\theta_2^2.$$

On the other hand, the almost Hermitian connection $\nabla$ induces a connection on $\Lambda_J^-$, which will also be denoted by $\nabla$. Again, we consider the complexification of $\Lambda_J^-$ and look at its eigenspace corresponding to the eigenvalue $\sqrt{-1}$ with respect to the induced involution $J: \Lambda_J^-\rightarrow \Lambda_J^-$, $\phi\mapsto J\phi$. The complexified bundle of the dual of $\Lambda_J^-$ has local generator $e_1\wedge e_2$. We have $$\nabla(e_1\wedge e_2)=(\theta_1^1+\theta_2^2)e_1\wedge e_2.$$ Hence the curvature is $$d(\theta_1^1+\theta_2^2)+(\theta_1^1+\theta_2^2)\wedge (\theta_1^1+\theta_2^2)=d(\theta_1^1+\theta_2^2).$$
This is identical to $tr(\Omega)$. This implies the first Chern class of the dual bundle of $\Lambda_J^-$ is $c_1(TM, J)$. Hence, the first Chern class of $\Lambda_J^-$ is $K_J=-c_1(TM, J)$. 
\end{proof}

The following material partially arises from discussions with Tedi Draghici. 
As in the above proof, an almost Hermitian connection $\nabla$ on $(M, J)$ induces a connection on $\Lambda_J^-$. This in turn gives a decomposition of the tangent bundle of total space $\Lambda_J^-$
$$T\Lambda_J^- = \mathcal{H} \oplus \mathcal{V} \; .$$
If $\pi: \Lambda_J^- \rightarrow M$ denotes the projection, $ \mathcal{V} = \ker(d\pi)$ is identified with the fiber of the bundle and, restricted to $\mathcal{H}$, $d\pi$ is an isomorphism
between $\mathcal{H}$ and $TM$ which enables us to identify these spaces. From $J$ and $\nabla$, we also get an almost complex structure, $\mathcal{J}_{\nabla}$ on the
total tangent space of the bundle, where $\mathcal{J}_{\nabla} = J$ on $\mathcal{H} = TM$ and $\mathcal{J}_{\nabla} = - J$ on $\mathcal{V}$. For all such $\mathcal J_{\nabla}$, the first Chern class $c_1(T\Lambda_J^-, \mathcal J_{\nabla})$ of the total space is $0$. Hence the total space of $\Lambda_J^-$ is an open almost complex Calabi-Yau $6$-manifold. Moreover if $M$ is symplectic, then the total space $\Lambda_J^-$ admits a symplectic form. It follows from a general fact that the total space of any complex line bundle $L$ over a symplectic manifold $(M, \omega)$ is symplectic, following Thurston's construction as in \cite{Gom}. In fact, it directly follows from Theorem 2.1 in \cite{Gom} by first compactifying $L$ to a $\mathbb CP^1$ bundle and noticing an $S^2$ fiber cannot be nullhomologous by adjunction formula.  We can choose the symplectic form such that it tames $\mathcal J$. Hence, the total space of $\Lambda_J^-$ is an open symplectic Calabi-Yau $6$-manifold.

A section $\psi \in \Gamma(\Lambda_J^-)$ is $(J, \mathcal{J}_{\nabla})$-holomorphic if
$$\mathcal{J}_{\nabla}\circ D \psi = D\psi \circ J ,$$
where $D\psi : TM \rightarrow T\Lambda_J^-$ is the differential of $\psi$ as a section. It should not be confused with taking the exterior derivative when $\psi$ is treated as a $2$-form.

Since $\pi \circ \psi = id_M$, with the identifications above, the condition that $\psi$ is
$(J, \mathcal{J}_{\nabla})$-holomorphic is equivalent to
\begin{equation} \label{Jholo} 
(\nabla_{JX} \psi)(JY, Z) + (\nabla_X \psi)(Y,Z) = 0 , \; \; \forall X,Y,Z \in TM \; .
\end{equation}
It is obvious that $\psi$ is $(J, \mathcal{J}_{\nabla})$-holomorphic if and only if $J\psi$ is
$(J, \mathcal{J}_{\nabla})$-holomorphic since $\nabla J=0$. Notice that \eqref{Jholo} implies that our $\mathcal J_{\nabla}$ is a ``bundle almost complex structure" in the sense of \cite{dBT}. For a complex vector bundle $(E, \mathcal J)$ over $(M, J)$, in addition to requiring the bundle projection is $(\mathcal J, J)$-holomorphic and $\mathcal J$ inducing the standard complex structure on the fibers, a bundle almost complex structure also requires that the fiberwise addition $\alpha: E\times_M E\rightarrow E$ and the fiberwise multiplication by a complex number $\mu: \mathbb C\times E\rightarrow E$ are both pseudoholomorphic. 

Different almost Hermitian connections $\nabla$ give rise to different almost complex structures on $\Lambda_J^-$. Some of them might admit pseudoholomorphic sections. For a tamed $J$ there is always a $J$-holomorphic subvariety in the class $K_J$ if $b^+>1$ \cite{T}. However, in general it is not the zero divisor of a pseudoholomorphic section of the complex line bundle $\Lambda_J^-$ with the almost complex structure $\mathcal J_{\nabla}$ induced by an almost Hermitian connection $\nabla$.

If we take $\nabla$ to be the Chern connection (sometimes referred to as the second canonical connection), {\it i.e.} the unique almost Hermitian connection whose $(1,1)$ part of the torsion vanishes, Tedi Draghici observed that $\psi$ is $(J, \mathcal{J}_{\nabla})$-holomorphic if and only if $\bar{\partial} (\psi +iJ\psi) = 0$ where $\bar{\partial}(\psi +iJ\psi)$ denotes the projection on $\Lambda^{2,1}_J$ of the differential $d(\psi +iJ\psi)$. Notice that $d(\psi +iJ\psi)=0$ if and only if $J$ is integrable. 

On the other hand, it was conjectured in \cite{DLZiccm} that the zero set of a closed $J$-anti-invariant form $\alpha$ is $J$-holomorphic for any almost complex structure, see Question \ref{Janti0holo} in our introduction. However, this conjecture does not follow from our current framework, since it would imply the form $J\alpha$ is also closed, which would imply the integrability of $J$. This conjecture is confirmed in \cite{BonZ}. It applies the general strategy of proving Theorem \ref{ICdim4}, but uses a different analysis for the local model.

Finally, we would like to know whether the divisor determines the pseudoholomorphic section up to scaling.

\begin{prop}\label{Dtos}
Suppose $J$ is tamed. For a given almost Hermitian connection $\nabla$ on $\Lambda_J^-$, all the pseudoholomorphic sections of the bundle $\Lambda_J^-$ with the same zero divisor are linearly parametrized by $\mathbb C$.  
\end{prop}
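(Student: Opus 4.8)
The plan is to show that the set of pseudoholomorphic sections with a fixed zero divisor $D$ is the complex line $\mathbb{C}\cdot s_1$ spanned by any one of them. First I would record the algebraic structure: because $\mathcal{J}_\nabla$ is a bundle almost complex structure, the fibrewise addition $\alpha$ and scalar multiplication $\mu$ are pseudoholomorphic, so if $s_1,s_2$ are $(J,\mathcal{J}_\nabla)$-holomorphic sections and $\lambda\in\mathbb{C}$, then $\lambda s_1$ and $s_2-\lambda s_1$ are again pseudoholomorphic sections. In particular the pseudoholomorphic sections form a complex vector space, and scaling a given $s_1$ by $\mathbb{C}$ produces a $\mathbb{C}$-family with the same zero divisor; it therefore suffices to prove that any two pseudoholomorphic sections $s_1,s_2$ with the same zero divisor $D$ are proportional. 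Since the sections are nontrivial we have $D\neq M$, so I may fix a point $p_0\in M\setminus|D|$. There $s_1(p_0)\neq 0$ spans the fibre line, so I set $\lambda:=s_2(p_0)/s_1(p_0)\in\mathbb{C}$ and put $t:=s_2-\lambda s_1$, a pseudoholomorphic section with $t(p_0)=0$ and $p_0\notin|D|$. The goal becomes $t\equiv 0$.

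Suppose $t\not\equiv 0$. By Proposition \ref{Hbun}, together with Proposition \ref{c1=k}, $t$ has a well-defined zero divisor $D_t$, a $J$-holomorphic $1$-subvariety supported on $t^{-1}(0)$ and lying in the class $PD(K_J)$, which is also the class of $D$. Because both $s_1$ and $s_2$ vanish on $|D|$, so does $t$; hence $|D_t|=t^{-1}(0)\supseteq|D|$, and since $p_0\in t^{-1}(0)\setminus|D|$ the inclusion is strict, the component of $D_t$ through $p_0$ not being among the components of $D$. If I can establish the multiplicity comparison $D_t\ge D$, meaning that along every component of $D$ the multiplicity in $D_t$ is at least that in $D$, then $D_t-D$ is a nonzero effective $J$-holomorphic $1$-subvariety. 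As $J$ is tamed by a symplectic form $\omega$, every nonzero effective subvariety has positive $\omega$-area, so its class $[D_t]-[D]$ pairs positively with $[\omega]$; but $[D_t]=[D]=PD(K_J)$ forces $[D_t]-[D]=0$ and hence zero area, a contradiction. Thus $t\equiv 0$ and $s_2=\lambda s_1$, proving that the sections with divisor $D$ form the line $\mathbb{C}\cdot s_1\cong\mathbb{C}$.

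The one genuinely analytic point, and what I expect to be the main obstacle, is the multiplicity comparison $D_t\ge D$; I would argue it locally via the Carleman similarity principle. Fix a component $C$ of $D$ with multiplicity $m$ and a $J$-holomorphic disk $\sigma$ meeting $C$ transversally at a smooth point $q$. By the description of multiplicities in Section \ref{homology}, $m$ equals the local intersection number of $s_i\circ\sigma$ with the zero section. In a local trivialization over $\sigma$, write $s_i\circ\sigma$ as $g_i:D'\to\mathbb{C}$; these solve a Cauchy--Riemann type equation, so the similarity principle gives $g_i(z)=z^m w_i(z)$ with $w_i$ continuous and $w_i(0)\neq 0$. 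Then $t\circ\sigma$ is represented by $g_2-\lambda g_1=z^m(w_2-\lambda w_1)$. If $w_2-\lambda w_1$ vanishes identically, then $t\equiv 0$ on a neighborhood and Proposition \ref{uniquecont} already yields $t\equiv 0$; otherwise $t\circ\sigma$ is a nontrivial pseudoholomorphic curve, and applying the similarity principle to it gives $g_2-\lambda g_1=z^{m'}W(z)$ with $W(0)\neq 0$ and $m'\ge 1$ the vanishing order, which equals the multiplicity of $D_t$ along $C$. Comparing the two expressions yields $w_2-\lambda w_1=z^{m'-m}W$; since the left-hand side is continuous at $0$ while the right-hand side blows up when $m'<m$, I conclude $m'\ge m$, which is exactly $D_t\ge D$ along $C$. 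Carrying this out uniformly over the foliation of transversal disks near the smooth locus of $D$, with the singular locus being a finite point set that does not affect multiplicities, completes the comparison and hence the proof.
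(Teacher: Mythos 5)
Your proof is correct and takes essentially the same route as the paper's: both exploit the complex-linearity of the solution space of \eqref{Jholo} to form a combination $(a+bJ)\psi$ agreeing with the second section at a point off the divisor, and both reach a contradiction with tameness because the resulting locus of agreement is a $J$-holomorphic $1$-subvariety in the fixed class $PD(K_J)$ that strictly contains the original divisor --- your zero divisor of the difference $t=s_2-\lambda s_1$ is the same object as the paper's intersection of the two graphs, and your similarity-principle comparison $D_t\ge D$ makes explicit a multiplicity point the paper leaves implicit. The one slip is in your degenerate branch: $t\circ\sigma\equiv 0$ only gives vanishing of $t$ on a $2$-disk, not on an open subset of $M$, so Proposition~\ref{uniquecont} does not apply there; this is harmless because a transversal disk chosen at a generic smooth point of $C$ meets the $1$-subvariety $|D_t|$ only at that point, so the degenerate case never occurs.
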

\begin{proof}
Any smooth section of the bundle $\Lambda_J^-$ is a $J$-anti-invariant form. If $\psi$ is a form such that the corresponding section of $\Lambda_J^-$ is $(J, \mathcal J_{\nabla})$-holomorphic, then $(a+bJ)\psi$, $a, b\in \mathbb R$, are also pseudoholomophic sections by Equation \eqref{Jholo} and has the same zero divisor as $\psi$. 

There are no more pseudoholomorphic sections with divisor $\psi^{-1}(0)$. If there is any, say $\phi$, then at some point $x\in M$ which is not in  $|\psi^{-1}(0)|$, $\phi |_x=(a+bJ)\psi |_x$ for some constant $a+ib\in \mathbb C$. In other words, the intersection of $\mathcal J_{\nabla}$-holomorphic subvarieties corresponding to $\phi$ and $(a+bJ)\psi$ contains $\psi^{-1}(0)\cup \{x\}$. In particular, the homology class of intersection is different from $K_J$ since any $J$-holomorphic subvariety has non-trivial homology when $J$ is tamed. This contradicts to Theorem \ref{ICdim4} since all the sections have the same normal bundle which is $\Lambda_J^-$ and their intersections have homology class $K_J=[\psi^{-1}(0)]$.
\end{proof}

Proposition \ref{Dtos} also works for any complex line bundle $E$ over $M$ with bundle almost complex structure in the sense of \cite{dBT}.

In particular, the above discussion of $(J, \mathcal{J})$-holomorphic sections of $\Lambda_J^-$ could be applied to a similar study of the tensor complex line bundle $(\Lambda_J^-)^{\otimes_{\mathbb C} N}$. This would lead to several potential applications. For example, we can study the growth rate of certain $(J, \mathcal{J}_{\nabla})$-holomorphic sections of $(\Lambda_J^-)^{\otimes_{\mathbb C} N}$. Here, for each positive integer $N$, we record the maximal dimension of such sections for any almost complex structure $J$ tamed by the symplectic form $\omega$ and any $\nabla$. This method may generalize the definition of Kodaira dimension to symplectic manifolds. So far, such definition is known only for symplectic manifolds of dimensions no greater than $4$.

In summary, the current section describes a way to find $J$-holomorphic curves through almost Hermitian connections. Given a complex line bundle $E$ over an almost complex $4$-manifold $M$, any almost Hermitian connection of $E$ would induce an almost complex structure on the total space. Then the zero divisor of any pseudoholomorphic section is a $J$-holomorphic subvariety of $M$ in class $c_1(E)$. Of particular interest is the bundle $\Lambda_J^-$, which is a generalization of the canonical bundle in almost complex setting. To produce $J$-holomorphic subvarieties in class $K_J$, we look at the pseudoholomorphic sections for almost complex structures on $\Lambda_J^-$ induced by almost Hermitian connections on $M$. Then each such section is a $J$-anti-invariant form satisfying certain ``closedness" condition. The zero divisor of each such form is a $J$-holomorphic subvariety in class $K_J$. Notice, there is no tameness requirement for our almost complex structure $J$, while tameness of $J$ is essential in \cite{T} to guarantee a $J$-holomorphic subvariety in class $K_J$.

\begin{remark}
The bundle $\Lambda_J^-$ is the bundle of real part of $(2, 0)$ forms \cite{LZ}, {\it i.e.} $\Lambda_J^-=(\Lambda_J^{2, 0}(M)_{\mathbb C}\oplus\Lambda_J^{0, 2}(M)_{\mathbb C})\cap \Lambda^2(M)$. From this viewpoint, the corresponding canonical line bundle for an almost complex $2n$-manifold is the bundle of real part of $(n, 0)$ forms. Except lacking of the corresponding statement of Theorem \ref{ICdim4}, most results in this section still work for this complex line bundle. In particular, its first Chern class is $K_J$.
\end{remark}

\section{Pseudoholomorphic maps and symplectic birational geometry}\label{app}
In this section, we discuss applications of our results to pseudoholomorphic maps and symplectic birational geometry.

\subsection{Pseudoholomorphic maps}
As we remarked in the introduction, the image of pseudoholomorphic map $u$ in Theorem \ref{ICdim4} might not be of dimension $4$. This leads to the first application.

\begin{prop}\label{fibJhol}
Let $f: X\rightarrow S$ be a pseudoholomorphic map from a closed almost complex $4$-manifold $(X, J)$ to a closed Riemann surface $S$. Then for any $x\in S$, the preimage $f^{-1}(x)$ is a $J$-holomorphic $1$-subvariety of $(X, J)$.
\end{prop}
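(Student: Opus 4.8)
The plan is to recognize $f^{-1}(x)$ as an intersection of the kind controlled by Theorem \ref{ICdim4}, so that the proposition reduces to a special case of that theorem. First I would dispose of the degenerate situation: if $f$ is constant then $f^{-1}(x)$ is either empty or all of $X$, so the substantive content occurs only when $f$ is non-constant, which I take as the standing assumption.

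Since $f$ is pseudoholomorphic, $df_p\colon T_pX\to T_{f(p)}S$ is complex linear, and because the target is a complex line the image of $df_p$ is a complex subspace of $T_{f(p)}S$; hence $df_p$ has real rank $0$ or $2$. Non-constancy forces $df_p\neq 0$ at some point, where $df_p$ then has rank $2$, so $f$ is a submersion on a nonempty open set and in particular an open map there. As $X$ is compact, $f(X)$ is also closed, so by connectedness of $S$ the map $f$ is surjective and $f^{-1}(x)\neq\emptyset$ for every $x\in S$.

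To invoke Theorem \ref{ICdim4} I have two equivalent set-ups. The most direct takes the ambient manifold to be $M=S$ (the case $n=1$) with $Z_2=\{x\}$: a single point of the Riemann surface $S$ is a compact connected codimension $2$ submanifold whose trivial tangent space is vacuously $J$-invariant, hence an almost complex submanifold in the required sense. With $u=f$ and $u(X)=f(X)\nsubseteq\{x\}$ by non-constancy, Theorem \ref{ICdim4} gives that $u^{-1}(Z_2)=f^{-1}(x)$ supports a $J$-holomorphic $1$-subvariety of $X$, which is exactly the claim. To avoid the degenerate ambient dimension $n=1$, the same conclusion follows from the graph construction: the graph map $\iota\colon X\to X\times S$, $p\mapsto(p,f(p))$, is $(J,\,J\times j)$-holomorphic because $df\circ J=j\circ df$, its image $\Gamma_f$ is a $4$-dimensional almost complex submanifold, and $X\times\{x\}\subset X\times S$ is a compact connected codimension $2$ almost complex submanifold; since $\iota^{-1}(X\times\{x\})=f^{-1}(x)$, Theorem \ref{ICdim4} applies verbatim with $2n=6$.

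The only thing to verify in either set-up is that the hypotheses of Theorem \ref{ICdim4} are met, so there is no genuine analytic obstacle at this stage: the hard work, namely the finiteness of the $2$-dimensional Hausdorff measure (Proposition \ref{Hd2}) and the construction of a positive cohomology assignment, is already carried out inside Theorem \ref{ICdim4}. The mild subtlety worth recording is that, in the $M=S$ picture, the positivity input Lemma \ref{posint} specializes to the positivity of the local degree of a pseudoholomorphic map between surfaces at a preimage of $x$, which is standard; the graph picture keeps the ambient dimension genuinely larger than $4$ and so sidesteps even this point.
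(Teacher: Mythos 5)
Your proposal is correct and its primary route is exactly the paper's proof: apply Theorem \ref{ICdim4} with $M=S$ and $Z_2=\{x\}$, a compact connected codimension~$2$ almost complex submanifold. The extra care about constant maps and the alternative graph construction in $X\times S$ are sensible additions but not needed; the paper invokes the theorem directly in the $n=1$ case.
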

\begin{proof}
Apply Theorem \ref{ICdim4} to $M=S$ and $Z_2=\{x\}$, we know $f^{-1}(x)$ is a $J$-holomorphic $1$-subvariety in $(X, J)$.
\end{proof}

In the following, we will mainly study equi-dimensional pseudoholomorphic maps. The study of such a map is discussed in section 5 of \cite{Zadv}. We start with a direct geometric method.  The advantage of this approach is that we do not require any tameness of the almost complex structure of the domain. The second approach relies on a more delicate study of the singularity subset. Exploring it, we will answer Question 5.4 of \cite{Zadv}, see Theorem \ref{phblowdown}.

We start with the following simple well-known lemma. For readers' convenience, we include its proof.

\begin{lemma}\label{imm=deg>0}
Let $u: (X, J)\rightarrow (M, J_M)$ be a pseudoholomorphic map between closed connected almost complex $2n$-manifolds. Then the following are equivalent:

\begin{enumerate}
\item $u$ is somewhere immersed.
\item $\deg u\ne 0$.
\item $\deg u>0$.
\item $u$ is surjective.
\end{enumerate}
\end{lemma}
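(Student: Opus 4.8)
The plan is to prove the cycle of implications $(1)\Rightarrow(3)\Rightarrow(2)\Rightarrow(4)\Rightarrow(1)$, using the fact that a pseudoholomorphic map is orientation-preserving wherever it is a local diffeomorphism. The key geometric input is that at any point where $u$ is immersed (hence a local diffeomorphism, since domain and target have the same dimension), the differential $du_p: T_pX \to T_{u(p)}M$ is a complex-linear isomorphism, and any complex-linear isomorphism between complex vector spaces preserves the canonical orientation. Thus the local degree of $u$ at any immersed point is $+1$, never $-1$. This sign-positivity is what forces the degree to be positive rather than merely nonzero.

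For $(1)\Rightarrow(3)$, suppose $u$ is immersed at some $p$. Then $u$ is a local diffeomorphism near $p$, and the local degree there is $+1$ by the orientation-preserving remark above. I would pick a regular value $q$ near $u(p)$ (regular values exist by Sard's theorem, and are dense near $u(p)$); the degree of $u$ equals the signed count $\sum_{x\in u^{-1}(q)}\deg_x u$. Each term $\deg_x u$ is the local degree at a point where $u$ is a local diffeomorphism, so each is $+1$. Since $u^{-1}(q)$ contains at least one point in the neighborhood of $p$ where we arranged $u$ to be a diffeomorphism, and every contributing point carries sign $+1$, the total is strictly positive. The implications $(3)\Rightarrow(2)$ is trivial, and $(2)\Rightarrow(4)$ is the standard fact that a map of nonzero degree between closed oriented manifolds of the same dimension is surjective (if $q\notin u(X)$, then $q$ is vacuously a regular value with empty preimage, giving $\deg u = 0$).

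For the remaining implication $(4)\Rightarrow(1)$, I would argue by contradiction using Sard's theorem together with the equidimensionality. If $u$ is nowhere immersed, then $du_p$ fails to be of full rank at every point, so every point of $X$ is a critical point and hence $u(X)$ is entirely the set of critical values. By Sard's theorem the set of critical values has measure zero in $M$, so $u(X)$ has measure zero. But $u(X)$ is compact (as $X$ is compact), hence closed, so its complement is a nonempty open set; therefore $u$ cannot be surjective, contradicting $(4)$. This closes the cycle.

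The step I expect to require the most care is $(1)\Rightarrow(3)$, specifically making the local-degree bookkeeping precise: one must verify that an immersed point of an equidimensional pseudoholomorphic map is genuinely a local diffeomorphism (full rank plus equidimension) and that its local degree is $+1$ because $du_p$ is $\C$-linear and $\C$-linear isomorphisms preserve orientation. The rest is standard degree theory. I would emphasize that no tameness or symplectic hypothesis is needed here — only the almost complex structures and the equidimensionality — which is precisely why this lemma is a convenient starting point for the subsequent analysis of equidimensional pseudoholomorphic maps.
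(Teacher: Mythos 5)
Your proof is correct and follows essentially the same route as the paper's: both rest on the observation that $\det(du_x)_{\mathbb R}=|\det(du_x)_{\mathbb C}|^2\ge 0$ at every point (so all local degrees at regular preimages are $+1$), combined with Sard's theorem and standard degree theory. The only cosmetic differences are that you organize the argument as a single cycle of implications and prove $(2)\Rightarrow(4)$ via the empty-preimage-of-a-regular-value argument rather than the paper's homology factorization; the mathematical content is the same.
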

\begin{proof}
For any point $x\in X$, $\det(du_x)_{\mathbb R}=|\det(du_x)_{\mathbb C}|^2\ge 0$ since $u$ is pseudoholomorphic. Meanwhile, for any regular value $y\in M$ of the map $u$, the topological degree of the map $u$ is defined as $$\deg u:=\sum_{x\in u^{-1}(y)} \hbox{sign}(\det(du_x)_{\mathbb R}).$$ Hence, it is clear that $(2)\iff (3)$.

If $\deg u=0$, by Sard's theorem, a generic point of $M$ is a regular value. By the definition of topological degree, for any regular value $y$, $u^{-1}(y)$ is an empty set which implies $u$ is not surjective. This shows $(4)\implies (2)$.

It is a simple fact that a map $f: X^n\rightarrow Y^n$ of nonzero degree between closed orientable manifolds is surjective. Otherwise, if $y\notin \hbox{im}(f)$, the factorization $H_n(X)\rightarrow H_n(Y\setminus \{x\})\rightarrow H_n(Y)$ shows $\deg f=0$. Hence $(2)\implies (4)$.

If $u$ is immersed at $x$, since being immersed is an open condition, it is a local diffeomorphism on an open neighborhood $U_x$ of $x$. By Sard's theorem, a generic point of $M$ is a regular value, hence there is a regular value $y=u(x')$ for some $x' \in U_x$. Moreover, $\deg u\ge \hbox{sign}(\det(du_{x'})_{\mathbb R})>0$. Hence $(1)\implies (3)$. 

Finally, $\deg u\ne 0$ implies $u$ is immersed at regular points. This is $(2)\implies (1)$. 
\end{proof}
Later, we will state the equivalent conditions (1)-(3) alternatively. 

We are ready for our first approach. We only state and prove the result for the simplest target space, the projective plane. 
\begin{prop}\label{maptorr}
Let $u: X\rightarrow \mathbb CP^2$ be a non-constant pseudoholomorphic map from a closed connected almost complex $4$-manifold $(X, J)$ to $\mathbb CP^2$ with a tamed almost complex structure. Then $u^{-1}(x)$ is the union of a (possibly empty) $J$-holomorphic $1$-subvariety of $(X, J)$ and a set of finitely many points for  any $x\in \mathbb CP^2$.
\end{prop}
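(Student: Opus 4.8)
The plan is to realize the single point $x$, which has codimension $4$, as the transverse intersection of two codimension-$2$ subvarieties of $\mathbb CP^2$, thereby reducing the statement to Theorem \ref{ICdim4MW} together with positivity of intersections in dimension $4$. Fix a symplectic form taming the almost complex structure on $\mathbb CP^2$. By Gromov's theory of $J$-holomorphic curves, through every point of $(\mathbb CP^2, J)$ there passes a $J$-holomorphic curve in the class of a line, and the lines through a fixed point form a positive-dimensional family; in particular there are at least two distinct $J$-holomorphic lines through $x$. Any two distinct such lines $L_1, L_2$ satisfy $L_1\cap L_2=\{x\}$, since they are irreducible $J$-holomorphic $1$-subvarieties with homological intersection number $1$, so by positivity of intersections (Lemma \ref{posint}) they meet at exactly one transverse point, which must be $x$.

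First I would arrange that $u(X)$ is contained in neither $L_1$ nor $L_2$. Since $u$ is non-constant, $u(X)$ contains two distinct points $p\ne q$; if a $J$-holomorphic line contained $u(X)$ it would pass through both $p$ and $q$, and two distinct lines through $p$ and $q$ would meet in at least two points, contradicting positivity of intersections. Hence at most one $J$-holomorphic line contains $u(X)$, and I may choose two distinct lines $L_1, L_2$ through $x$, both different from that exceptional line when it exists. For such a choice $u(X)\nsubseteq L_i$, so Theorem \ref{ICdim4MW} applies to each $L_i$ and shows that $u^{-1}(L_i)$ is the support of a $J$-holomorphic $1$-subvariety of $(X,J)$, that is, a finite union of irreducible $J$-holomorphic curves.

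The final step records that $u^{-1}(x)=u^{-1}(L_1\cap L_2)=u^{-1}(L_1)\cap u^{-1}(L_2)$ and analyzes this intersection componentwise. Writing $u^{-1}(L_1)=\bigcup_a C_a$ and $u^{-1}(L_2)=\bigcup_b C_b'$ as unions of irreducible $J$-holomorphic curves, the intersection is $\bigcup_{a,b}(C_a\cap C_b')$. For a pair with $C_a=C_b'$ one has $u(C_a)\subseteq L_1\cap L_2=\{x\}$, so the whole curve is contracted to $x$ and contributes a component of a $J$-holomorphic $1$-subvariety. For a pair of distinct irreducible curves $C_a\ne C_b'$, unique continuation (Proposition \ref{uniquecont}) shows they cannot agree on any open set, so by Theorem \ref{MW7.1} their intersection consists of isolated points, hence is finite by compactness. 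Collecting terms, $u^{-1}(x)$ is the union of a (possibly empty) $J$-holomorphic $1$-subvariety, namely the common components, and finitely many points.

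The main obstacle is the input from Gromov theory, that is, guaranteeing the existence of enough $J$-holomorphic lines through $x$ for a tamed $J$ on $\mathbb CP^2$, together with the bookkeeping needed to choose the two lines so that neither contains $u(X)$; once these are secured, Theorem \ref{ICdim4MW} and the dimension-$4$ positivity of intersections supply the conclusion essentially formally.
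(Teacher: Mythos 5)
Your proposal is correct and follows essentially the same route as the paper: realize $\{x\}$ as the intersection of two $J$-holomorphic lines in the class $H$ through $x$, apply the main intersection theorem to each line to see that $u^{-1}(L_i)$ supports a $J$-holomorphic $1$-subvariety, and then observe that $u^{-1}(x)=u^{-1}(L_1)\cap u^{-1}(L_2)$ splits into common (contracted) components plus finitely many isolated intersection points. The only cosmetic differences are in the bookkeeping for choosing lines not containing $u(X)$ --- the paper produces its lines via Taubes' SW${}={}$Gr through $x$ and an auxiliary point $y\notin u(X)$ when $\deg u=0$, whereas you use the positive-dimensional family of lines through $x$ together with the uniqueness of a line containing two given points --- and your use of Theorem \ref{ICdim4MW} where the paper invokes Theorem \ref{ICdim4} directly (legitimate since the lines are smooth by adjunction); both variants are sound.
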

\begin{proof}

For any $x\in \mathbb CP^2$, we find two distinct  rational curves $S_x$ and $S_x'$ in class $H$ which pass through $x$. By the positivity of local intersections of pseudoholomorphic curves, $S_x$ and $S_x'$ only intersect at $x$ and they are not tangent to each other. We can assume $u(X)\nsubseteq S_x\cup S_{x}'$. 

By Lemma \ref{imm=deg>0},  $\deg u\ne 0$ if and only if $u$ is surjective. In fact, a generic point is not in the image of $u$ when $\deg u=0$. We choose $y\notin u(X)$ when $\deg u=0$ or any $y\in \mathbb CP^2$ when $\deg u\ne 0$, and choose $S_x$ to be the unique smooth rational $J$-holomorphic curve in the line class $H\in H^2(\mathbb CP^2, \mathbb Z)$ passing through $x$ and $y$. The existence is guaranteed by Taubes' SW=Gr \cite{T, LLb+1} where the Gromov-Taubes invariant of $H$ is $1$ and its Seiberg-Witten dimension is $H\cdot H+1=2$. We know it is irreducible because $H$ has the minimal symplectic energy and smooth because of adjunction formula. Then we choose another point $y'$ ($y'\notin u(X)\cup S_x$ if $\deg u=0$) and choose  $S_x'$ to be the smooth rational $J$-holomorphic curve in the line class $H\in H^2(\mathbb CP^2, \mathbb Z)$ passing through $x$ and $y'$.

By Theorem \ref{ICdim4}, $u^{-1}(S_x)$ and $u^{-1}(S_x')$ are both $J$-holomorphic subvarieties. Hence, their intersection $u^{-1}(S_x)\cap u^{-1}(S_x')=u^{-1}(x)$ is the union of a (possibly empty) $J$-holomorphic $1$-subvariety of $(X, J)$ and a set of finitely many points. 
\end{proof}

The statement of Proposition \ref{maptorr} is also true when $\mathbb CP^2$ is replaced by any symplectic $4$-manifold of $b^+=1$ with any tamed almost complex structure by applying results of \cite{LLwall}. For this generality, we need to use Theorem \ref{ICdim4MW} where $Z_2$ is merely a $J$-holomorphic $1$-subvariety.

Most almost complex manifolds do not admit non-constant pseudoholomorphic maps to $\mathbb CP^2$. The existence of such a map would require $X$ to contain many $J$-holomorphic curves, which is comparable to the requirement of the algebraic dimension $a(X)>0$ in the complex setting. As we can see from Proposition \ref{maptorr}, such an $X$ is covered by $J$-holomorphic subvarieties. 

\begin{cor}\label{curvecover}
Suppose there is a non-constant pseudoholomorphic map $u: X\rightarrow \mathbb CP^2$ from a closed connected almost complex $4$-manifold $(X, J)$ to $\mathbb CP^2$ with a tamed almost complex structure. Then there is a $J$-holomorphic subvariety passing through any point of $X$. 
\end{cor}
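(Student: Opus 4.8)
The plan is to deduce Corollary~\ref{curvecover} directly from Proposition~\ref{maptorr} with essentially no additional analysis, since the geometric content has already been established. The key observation is that Proposition~\ref{maptorr} produces, for \emph{each} point $x\in\mathbb{CP}^2$, a $J$-holomorphic $1$-subvariety inside the fiber $u^{-1}(x)$ together with finitely many isolated points; the task is merely to argue that the $J$-holomorphic part is nonempty for a well-chosen $x$, and that by varying $x$ we sweep out all of $X$.

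First I would fix an arbitrary point $p\in X$ and set $x:=u(p)\in\mathbb{CP}^2$, so that $p\in u^{-1}(x)$. By Proposition~\ref{maptorr}, the fiber $u^{-1}(x)$ is the union of a (possibly empty) $J$-holomorphic $1$-subvariety $\Theta_x$ and a finite set of points. The only thing to rule out is the degenerate possibility that $p$ happens to lie in the finite point-set rather than on $\Theta_x$, in which case the conclusion for $p$ would not be immediate from the fiber over $x=u(p)$ alone. To handle this cleanly I would instead invoke the construction inside the proof of Proposition~\ref{maptorr}: choose a single $J$-holomorphic rational curve $S_x$ in the line class $H$ passing through $x=u(p)$ (and through a generic auxiliary point $y$, exactly as in that proof, so that $u(X)\nsubseteq S_x$). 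By Theorem~\ref{ICdim4}, $u^{-1}(S_x)$ is a $J$-holomorphic $1$-subvariety of $(X,J)$, and it contains $p$ since $u(p)=x\in S_x$. As $p$ was arbitrary, every point of $X$ lies on a $J$-holomorphic $1$-subvariety, which is exactly the assertion.

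The one technical point to verify is that for each $p$ one can genuinely produce such an $S_x$ with $u(X)\nsubseteq S_x$, so that Theorem~\ref{ICdim4} applies and $u^{-1}(S_x)$ is a genuine (nonempty, one-dimensional) subvariety rather than all of $X$ or empty. This is guaranteed by the same SW$=$Gr existence input used in Proposition~\ref{maptorr}: the Gromov--Taubes invariant of the line class $H$ is $1$, giving a smooth irreducible rational $J$-holomorphic curve through $x=u(p)$ and a generic second point, and since $u$ is non-constant (indeed $u(X)$ has dimension at least $2$ by Lemma~\ref{imm=deg>0} combined with the non-constancy hypothesis) we may pick the auxiliary point $y$ generically so that $u(X)\nsubseteq S_x$. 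With $u(X)\nsubseteq S_x$ secured, $u^{-1}(S_x)$ supports a $J_1$-holomorphic $1$-subvariety by Theorem~\ref{ICdim4}, and it is nonempty because it contains $p$.

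I do not expect a substantive obstacle here: the corollary is a near-immediate packaging of Proposition~\ref{maptorr}, and the only genuine care needed is bookkeeping to ensure the chosen curve $S_x$ passes through the prescribed image point while avoiding the exceptional configuration $u(X)\subseteq S_x$. The mild subtlety is purely one of framing---whether to extract the subvariety from the full fiber $u^{-1}(u(p))$ or, more robustly, from the single-curve preimage $u^{-1}(S_x)$---and I would present the latter since it makes the membership $p\in u^{-1}(S_x)$ transparent and sidesteps any worry about $p$ landing among the finitely many isolated fiber points.
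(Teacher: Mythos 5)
Your proposal is correct and follows essentially the same route as the paper: for each $p\in X$ one chooses a smooth rational $J$-holomorphic curve $S_{u(p)}$ in the line class $H$ through $u(p)$ (picking the auxiliary point so that $u(X)\nsubseteq S_{u(p)}$, which handles the degree-zero case), and then $u^{-1}(S_{u(p)})$ is a $J$-holomorphic $1$-subvariety containing $p$ by Theorem~\ref{ICdim4}. Your remark that one should work with $u^{-1}(S_x)$ rather than the full fiber $u^{-1}(u(p))$ --- to avoid $p$ landing among the finitely many isolated points --- is exactly the right precaution and matches the paper's argument.
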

\begin{proof}
Choose any point $x\in X$. 
If $\deg u>0$, we choose a rational curve $S_x$ in class $H$ passing through $u(x)$. If $\deg u=0$, we choose this rational curve $S_x$ such that $u(X)\nsubseteq S_x$. 
Then preimage $u^{-1}(S_x)$ is a $J$-holomorphic curve passing through $x$ by Theorem \ref{ICdim4}.  
\end{proof}

In particular, a  $K3$ or $T^4$ with a generic tamed almost complex structure does not admit any pseudoholomorphic map to $\mathbb CP^2$ endowed with any tamed almost complex structure.

For our second approach to pseudoholomorphic maps between almost complex $4$-manifolds, we start with studying the singularity subset of a pseudoholomorphic map. For a pseudoholomorphic map $u: (X, J)\rightarrow (M, J_M)$, the singularity subset of $u$ is defined to be the points $p\in X$ such that the differential $du_p: T_pX\rightarrow T_{u(p)}M$ is not of full rank. 

\begin{theorem}\label{s4to4}
Let $u: (X, J)\rightarrow (M, J_M)$ be a somewhere immersed pseudoholomorphic map between closed almost complex $4$-manifolds. Then the singularity subset $\mathcal S_u$ of $u$ supports a $J$-holomorphic $1$-subvariety.
\end{theorem}
\begin{proof}
We first look at the manifold $J^1(X, M)$ of $1$-jets of pseudoholomorphic mappings from $(X, J)$ to $(M, J_M)$. It is identified with the total space of the complex vector  bundle $\mathcal E$ over $X\times M$, whose fiber is the complex vector space of all complex linear maps $L: T_xX\rightarrow T_mM$ regarding the almost complex structures $J|_x$ and $J_M|_m$, for each $x\in X$ and $m=u(x)\in M$. Therefore $\mathcal E$ is a complex vector bundle of rank $4$ in our situation. By \cite{Gau}, there is a canonical almost complex structure $\mathcal J$ on $J^1(X, M)$ such that for any pseudoholomorphic map $u: (X, J)\rightarrow (M, J_M)$, the canonical lift $u_{\mathcal E}(x)=(x, u(x),  (du_x)_{\mathbb C})$ is a pseudoholomorphic map from $(X, J)$ to $(J^1(X, M), \mathcal J)$. The conclusion could also be derived from the canonical almost complex structure on the total space of the tangent bundle of an almost complex manifold (\cite{YK}, see Theorem 3.2 in \cite{LS}), which would in turn give a canonical almost complex structure on $\mathcal E$. \footnote {These two canonical almost complex structures on $J^1(X, M)$ are different. As discussed in \cite{LS}, the one in \cite{Gau}, for the tangent bundle case, does not have the deformation property (d) in Theorem 3.2 of \cite{LS}, although we do not need this property here. } 

By taking the fiberwise complex determinant of the bundle $\mathcal E$, {\it i.e.}, replacing the fibers of $\mathcal E$, the complex linear maps $L: T_xX\rightarrow T_mM$, by the induced maps $\det L: \Lambda_{\mathbb C}^2T_xX \rightarrow \Lambda^2_{\mathbb C}T_mM$, we get a complex line bundle $\mathcal L$. Its total space inherits the canonical almost complex structure from $J^1(X, M)$. Its zero section $X\times M\times \{0\}$ is a codimension two $\mathcal J$-holomorphic submanifold. 
The map $u$ induces a map $u_{\mathcal L}(x)=(x, u(x), \det (du_x)_{\mathbb C})$ from $X$ to $\mathcal L$. It is a pseudoholomorphic map by the calculation of Proposition A.2.10 of \cite{Gau}. The intersection  $u_{\mathcal L}(X)\cap (X\times M\times \{0\})$ inside $\mathcal L$ is the set of points $(x, u(x), 0)$ where $u$ is not immersed at $x$. In particular, $u_{\mathcal L}^{-1}(X\times M\times \{0\})$ is the singularity subset  $\mathcal S_u$.

Apply Theorem \ref{ICdim4} to $Z_2= X\times M\times \{0\}$ and the ambient space $\mathcal L$ for the pseudoholomorphic map $u_{\mathcal L}$. Since $u$ is somewhere immersed, we know $u_{\mathcal L}(X)\nsubseteq Z_2$.  Hence, the singularity subset of $u$, $\mathcal S_u=u_{\mathcal L}^{-1}(X\times M\times \{0\})$, supports a $J$-holomorphic $1$-subvariety.
\end{proof}

The following is a quick corollary.

\begin{cor}\label{fromnc}
Let $u: (X, J) \rightarrow (M, J_M)$ be a degree non-zero pseudoholomorphic map between closed almost complex $4$-manifolds. If $X$ is simply connected with no $J$-holomorphic curves, {\it e.g.} a generic $K3$ surface, then $u$ is a diffeomorphism. If $X$ has no $J$-holomorphic curves, {\it e.g.} a generic $T^4$, then $u$ is a covering map.
\end{cor}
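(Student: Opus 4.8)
The plan is to feed the hypotheses into Theorem \ref{s4to4} to annihilate the singular set, and then run the resulting everywhere-immersion through elementary covering-space theory. First I would note that since $\deg u\neq 0$, Lemma \ref{imm=deg>0} guarantees that $u$ is somewhere immersed (indeed surjective with $\deg u>0$), so Theorem \ref{s4to4} applies and the singularity subset $\mathcal S_u$ supports a $J$-holomorphic $1$-subvariety of $X$. But $X$ is assumed to carry no $J$-holomorphic curves whatsoever, so the only $1$-subvariety available is the empty one. Hence $\mathcal S_u=\emptyset$, which is to say $du_p\colon T_pX\to T_{u(p)}M$ has full rank at every $p\in X$; that is, $u$ is an immersion.

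Next I would upgrade the immersion to a covering. Since $\dim X=\dim M=4$ and $u$ is an immersion between manifolds of the same dimension, $u$ is a local diffeomorphism. As $X$ is compact, $u$ is proper, and a proper local diffeomorphism onto a connected manifold is a covering map of some finite degree $d\geq 1$; because $u$ is pseudoholomorphic it is orientation preserving, so $d=\deg u$. This already settles the second assertion: when $X$ merely has no $J$-holomorphic curves (as for a generic $T^4$, whose fundamental group $\mathbb Z^4$ is consistent with genuinely multi-sheeted coverings such as self-maps of the torus), $u$ is a covering map and nothing more can be claimed.

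For the first assertion I would add the hypothesis that $X$ is simply connected. Then the covering $u\colon X\to M$ exhibits $X$ as the universal cover of $M$, so $\pi_1(M)$ is finite of order $d=\deg u$, and the deck group $\Gamma\cong\pi_1(M)$ acts freely on $X$ by pseudoholomorphic diffeomorphisms, each preserving $J$ since $J=u^{*}J_M$. It then remains to show $d=1$, equivalently that $\Gamma$ is trivial, and this is where I expect the genuine difficulty to lie. Purely topological input does not suffice: a simply connected $X$ can admit a free $J$-preserving involution (the Enriques involution on a suitably $J$-invariant $K3$), whose quotient again carries no curves, so ``no curves plus simply connected'' alone does not force $d=1$. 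The key observation is that a nontrivial deck transformation would be a nontrivial \emph{free} automorphism of $(X,J)$, and for a generic almost complex structure — precisely the case recorded in the statement by ``generic $K3$'' — the automorphism group is trivial, forcing $\Gamma=\{1\}$, hence $d=1$ and $u$ a diffeomorphism. The main obstacle is therefore not the immersion-to-covering chain, which is immediate once Theorem \ref{s4to4} is in hand, but isolating the genericity (or rigidity) hypothesis on $J$ that excludes free pseudoholomorphic deck transformations and thereby pins the covering degree to one.
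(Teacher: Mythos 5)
Your reduction is exactly the paper's: $\deg u\ne 0$ gives somewhere immersed via Lemma \ref{imm=deg>0}, Theorem \ref{s4to4} together with the absence of $J$-holomorphic curves forces $\mathcal S_u=\emptyset$, and a proper local diffeomorphism between closed manifolds is a covering map. This disposes of the second assertion and is precisely what the paper does.

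Where you diverge is the final step of the first assertion. The paper's entire argument there is the single sentence that a covering map with simply connected domain has to be a diffeomorphism; you decline to accept this and instead try to kill the deck group by assuming the automorphism group of $(X,J)$ is trivial for ``generic'' $J$. Your unease is not baseless --- as a bare topological statement the paper's sentence is the wrong direction of the standard covering-space fact (it is coverings \emph{onto} simply connected targets that are trivial; $K3\to$ Enriques is a holomorphic covering with simply connected total space) --- but your repair is not a proof of the corollary as stated: the only hypothesis is that $X$ carries no $J$-holomorphic curves, the ``generic $K3$'' being merely an illustration flagged by ``{\it e.g.}'', and genericity of $J$ in the sense of excluding free pseudoholomorphic automorphisms is nowhere assumed. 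So as written your argument establishes the first assertion only under an added hypothesis, which is a genuine gap relative to the statement. (Your proposed counterexample also does not qualify: the $K3$ cover of an Enriques surface has Picard number at least $10$ and is full of curves, so it violates the ``no curves'' hypothesis.) To match the paper you would simply invoke its one-line assertion; to do better you would need to derive triviality of the deck group from the stated hypotheses, e.g.\ by showing that a nontrivial free finite-order $J$-biholomorphism of a simply connected $X$ without $J$-holomorphic curves cannot exist --- a step neither your proposal nor the paper supplies.
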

\begin{proof}
By Theorem \ref{s4to4} and the assumption that $X$ has no $J$-holomorphic curves, the singularity subset $\mathcal S_u=\emptyset$. Hence $u$ is a local diffeomorphism onto $M$. Since $X$ is compact, $u$ is a covering map. Moreover, when $X$ is simply connected, a covering map has to be a diffeomorphism. 
\end{proof}

The following lemma describes the structure of a somewhere immersed pseudoholomorphic map.

\begin{lemma}\label{decX}
For a somewhere immersed pseudoholomorphic map $u: (X, J)\rightarrow (M, J_M)$ between closed almost complex $4$-manifolds, the points of $X$ are divided into three groups:
\begin{enumerate}
\item the regular points $X\setminus \mathcal S_u$, where $u$ is a local diffeomorphism and the preimage of any point in $u(X\setminus \mathcal S_u)$ has at most $\deg u$ points in $X\setminus \mathcal S_u$;
\item a subset of singular points $\mathcal S_u'\subset \mathcal S_u$, consisting of all the irreducible components of $\mathcal S_u$ contracted by $u$;
\item points in $\mathcal S_u\setminus \mathcal S_u'$, where the preimage of each point in  $u(\mathcal S_u\setminus \mathcal S_u')$ has finitely many points for $u|_{\mathcal S_u\setminus \mathcal S_u'}$.
\end{enumerate}
\end{lemma}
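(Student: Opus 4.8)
The plan is to establish the three asserted properties in turn, taking as the starting point Theorem \ref{s4to4}, which guarantees that $\mathcal S_u$ is the support of a $J$-holomorphic $1$-subvariety $\Theta=\{(C_i, m_i)\}$; in particular $\mathcal S_u=\cup_i C_i$ is a \emph{finite} union of irreducible $J$-holomorphic curves. Once $\mathcal S_u'\subset \mathcal S_u$ is defined as in (2), the decomposition $X=(X\setminus\mathcal S_u)\cup(\mathcal S_u\setminus\mathcal S_u')\cup\mathcal S_u'$ is automatic, so the content is entirely in the stated properties of each piece.

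For (1), on $X\setminus\mathcal S_u$ the differential $du$ has full rank by the very definition of the singularity set, so the inverse function theorem makes $u$ a local diffeomorphism there. Because $u$ is pseudoholomorphic, $\det(du_x)_{\mathbb R}=|\det(du_x)_{\mathbb C}|^2>0$ at each regular point, so the local degree is $+1$. To bound the regular fibre I would fix $y\in u(X\setminus\mathcal S_u)$, take any finite collection $x_1,\dots,x_k$ of its preimages in $X\setminus\mathcal S_u$, and choose disjoint neighborhoods $U_j\ni x_j$ on which $u$ restricts to a diffeomorphism onto an open set $V_j\ni y$. By Sard's theorem I can pick a regular value $y'\in\bigcap_j V_j$; then each $U_j$ contains exactly one preimage of $y'$, producing $k$ distinct regular preimages of $y'$. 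Since $u$ is somewhere immersed, Lemma \ref{imm=deg>0} gives $\deg u>0$, and since every local degree is $+1$ the regular value $y'$ has exactly $\deg u$ preimages. Hence $k\le \deg u$; applying this to any finite subcollection shows the full regular fibre has at most $\deg u$ points.

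Part (2) is a definition supported by a dichotomy: each irreducible component $C_i$ of $\mathcal S_u$ is an irreducible $J$-holomorphic curve, so $u|_{C_i}$ lifts to a pseudoholomorphic map from a compact connected Riemann surface $\Sigma_i$, which is either constant or non-constant. I set $\mathcal S_u'$ to be the union of those components on which this map is constant, i.e. the components contracted to a point. For part (3), the components of $\mathcal S_u\setminus\mathcal S_u'$ are precisely those on which $u|_{C_i}$ is non-constant; as recalled in the introduction, a non-constant pseudoholomorphic map from a compact connected Riemann surface factors as a holomorphic branched covering followed by a somewhere injective pseudoholomorphic map, and both have finite fibres, so $u|_{C_i}$ has finite fibres. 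Since there are finitely many such components, $u|_{\mathcal S_u\setminus\mathcal S_u'}$ has finite fibres as claimed.

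I expect the degree-counting in part (1) to be the delicate point: one must verify that distinct regular preimages of $y$ genuinely contribute distinct preimages of a nearby regular value, and that the positivity of all local degrees converts the global topological degree into an exact count on regular fibres so that it dominates $k$. Parts (2) and (3) are then essentially formal consequences of Theorem \ref{s4to4} together with the standard structure theory of pseudoholomorphic maps from curves.
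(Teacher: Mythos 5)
Your proposal is correct and follows essentially the same route as the paper: both rest on Theorem \ref{s4to4} to make $\mathcal S_u$ a finite union of irreducible $J$-holomorphic curves, both get part (1) from the inverse function theorem plus the fact that all local degrees of a pseudoholomorphic local diffeomorphism are $+1$ (so regular fibres have exactly $\deg u$ points and nearby non-regular-value fibres in $X\setminus\mathcal S_u$ at most $\deg u$), and both obtain (2) and (3) from the constant/non-constant dichotomy for the restriction of $u$ to a model Riemann surface of each component. Your write-up of the fibre-counting in (1) is in fact somewhat more detailed than the paper's.
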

\begin{proof}
Since the singularity subset $\mathcal S_u$ is $J$-holomorphic and $u$ is pseudoholomorphic, its image $u(\mathcal S_u)$ is $J_M$-holomorphic. Other than this codimension two subset, $X\setminus \mathcal S_u$ are regular points. Thus the restriction $u|_{X\setminus \mathcal S_u}$ is a local diffeomorphism. For any point in $M\setminus u(\mathcal S_u)$, the preimage contains degree $\deg u>0$ many points. Since near any point in $X\setminus (\mathcal S_u\cup u^{-1}(M\setminus u(\mathcal S_u)))$, $u$ is also a local diffeomorphism, the points in $u(X\setminus \mathcal S_u)\setminus (M\setminus u(\mathcal S_u))$ has at most $\deg u$ points in $X\setminus \mathcal S_u$.

Restricting to each irreducible component  of the singularity set, $\mathcal S_u$, the image of $u$ is either a point or a $J_M$-pseudoholomorphic $1$-subvariety. The union of the irreducible components contracted by $u$ constitutes of the subset $\mathcal S_u'\subset \mathcal S_u$.

The closure of each irreducible component $C$ of $\mathcal S_u\setminus \mathcal S_u'$, forgetting multiplicities, maps to $J_M$-holomorphic $1$-subvarieties. Choose a model Riemman surface $\Sigma$ for $C$. It gives a non-trivial pseudoholomorphic curve in $M$. In particular, the preimage of any point in the image is a set of finitely many points in $\Sigma$. 
\end{proof}
Hence, we have a more general version of Proposition \ref{maptorr} for somewhere immersed pseudoholomorphic maps.

\begin{prop}\label{4to4fiber}
Let $u: (X, J)\rightarrow (M, J_M)$ be a somewhere immersed pseudoholomorphic map between closed connected almost complex $4$-manifolds. Then other than finitely many points $x\in M$, where $u^{-1}(x)$ is the union of a $J$-holomorphic $1$-subvariety and finitely many points, the preimage of each point is a set of finitely many points.
\end{prop}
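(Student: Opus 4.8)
The plan is to assemble the statement directly from Theorem \ref{s4to4} and the trichotomy of Lemma \ref{decX}, so that the proposition reduces to bookkeeping over the three groups of points of $X$. Since $u$ is somewhere immersed, Lemma \ref{imm=deg>0} gives $\deg u>0$, and Theorem \ref{s4to4} ensures that the singularity subset $\mathcal S_u$ supports a $J$-holomorphic $1$-subvariety; in particular $\mathcal S_u$ has only finitely many irreducible components. First I would sort these components according to whether they are contracted by $u$, exactly as in Lemma \ref{decX}, into the contracted part $\mathcal S_u'$ and the non-contracted part $\mathcal S_u\setminus \mathcal S_u'$.

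The crucial finiteness input is that $\mathcal S_u'$ is a union of finitely many irreducible components $C_1,\dots,C_k$, each collapsed to a single point $p_i=u(C_i)\in M$. I would set $F=\{p_1,\dots,p_k\}$, a finite subset of $M$, and observe that, because contraction happens on whole irreducible components, $u^{-1}(x)\cap\mathcal S_u'=\emptyset$ whenever $x\notin F$, while $u^{-1}(x)\cap \mathcal S_u'=\bigcup_{i:\,p_i=x} C_i$ when $x\in F$. This union is precisely the positive-dimensional part of the fibre, and it is a $J$-holomorphic $1$-subvariety.

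It then remains to bound the contributions of the other two groups. For any $x\in M$, the points of $u^{-1}(x)$ lying in the regular locus $X\setminus \mathcal S_u$ number at most $\deg u$ by Lemma \ref{decX}(1), and the points lying in $\mathcal S_u\setminus \mathcal S_u'$ form a finite set by Lemma \ref{decX}(3), since the restriction of $u$ to each such component is a non-constant pseudoholomorphic curve whose fibres are finite. Hence for $x\notin F$ the fibre $u^{-1}(x)$ is entirely contained in $(X\setminus \mathcal S_u)\cup(\mathcal S_u\setminus \mathcal S_u')$ and is therefore a finite set of points, while for $x\in F$ the fibre is the union of the $J$-holomorphic $1$-subvariety $\bigcup_{i:\,p_i=x}C_i$ and a finite set of points. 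Taking $F$ as the exceptional finite set then proves the claim.

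The step requiring the most care is not analytic but structural: one must verify that every fibre can acquire a positive-dimensional piece only through $\mathcal S_u'$, so that the subvariety part of each fibre is accounted for solely by contracted components. This dichotomy is the content of Lemma \ref{decX}, and the only genuine obstacle would be a fibre developing a positive-dimensional piece outside $\mathcal S_u$, which cannot happen because $u$ is a local diffeomorphism off $\mathcal S_u$ and hence has discrete, and by compactness finite, fibres there. Once this is clear, the finiteness of $F$ follows immediately from the finiteness of the irreducible components of $\mathcal S_u$.
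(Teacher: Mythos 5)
Your proposal is correct and follows essentially the same route as the paper: invoke Theorem \ref{s4to4} to get that $\mathcal S_u$ is a compact $J$-holomorphic $1$-subvariety with finitely many irreducible components, let $F$ be the (finite) set of images of the contracted components, and then use the trichotomy of Lemma \ref{decX} to conclude that fibres over points outside $F$ are finite while fibres over points of $F$ are a $1$-subvariety plus finitely many points. Your write-up is just a more explicit bookkeeping of the same argument.
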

\begin{proof}
The singularity subset $\mathcal S_u$ is a (compact) $J$-holomorphic $1$-subvariety. Hence, only finitely many irreducible $J$-holomorphic subvarieties, which are some irreducible components of $\mathcal S_u$, will be contracted by $u$. The images are finitely many points in $M$. We denote the union of these irreducible $J$-holomorphic subvarieties by $\mathcal S_u'$. On $M\setminus \mathcal S_u'$, the preimage of any point $x\in u(M\setminus \mathcal S_u')$ under the map $u$ is a set of finitely many points by Lemma \ref{decX}.
\end{proof}

\subsection{Degree one pseudoholomorphic maps}
It is particularly interesting when applying the above results to a degree one pseudoholomorphic map $u: (X^4, J)\rightarrow (M^4, J_M)$.  Such a map has particularly nice structure which is essentially a birational morphism in pseudoholomorphic category.

First, we have a version of Zariski's main theorem for degree one pseudoholomorphic maps between almost complex $4$-manifolds.

\begin{prop}\label{Zarmain}
Let $u: (X, J) \rightarrow (M, J_M)$ be a degree one pseudoholomorphic map between closed connected almost complex $4$-manifolds. Then other than finitely many points $m\in M_1\subset M$, where $u^{-1}(m)$ is a connected $J$-holomorphic $1$-subvariety, $u|_{X\setminus u^{-1}(M_1)}$ is a diffeomorphism. 
\end{prop}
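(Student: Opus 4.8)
The plan is to reduce the statement to two facts about a degree one pseudoholomorphic map: that its entire singularity subset is contracted, and that the resulting fibers are connected. First I would record the preliminary structure. Since $\deg u = 1 \neq 0$, Lemma~\ref{imm=deg>0} shows $u$ is somewhere immersed and surjective, so Theorem~\ref{s4to4} applies and the singularity subset $\mathcal S_u$ supports a $J$-holomorphic $1$-subvariety. Decompose $X$ as in Lemma~\ref{decX}, writing $\mathcal S_u' \subset \mathcal S_u$ for the union of the irreducible components contracted to points, and set $M_1 := u(\mathcal S_u')$, a finite subset of $M$.

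The crux is to prove $\mathcal S_u = \mathcal S_u'$, that is, \emph{every} irreducible component of the singularity subset is contracted. I would argue by contradiction using the local mapping degree. Suppose $C \subset \mathcal S_u$ is an irreducible component that is not contracted, so that $u|_C \colon C \to C' = u(C)$ is a finite nonconstant pseudoholomorphic map onto a $J_M$-holomorphic curve $C'$. Pick a generic $q \in C'$; since $M_1$ is finite and $C'$ is a curve we may take $q \notin M_1$, and then Proposition~\ref{4to4fiber} guarantees that $u^{-1}(q)$ is a finite set. Any preimage $p \in C$ of $q$ is then a singular point of $u$ that is isolated in the fiber $u^{-1}(q)$. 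The key local input is the pseudoholomorphic analogue of the holomorphic local multiplicity statement: at a point $p$ where $du_p$ is not of full rank but which is isolated in its fiber, the local degree of $u$ at $p$ is at least $2$, whereas at a local diffeomorphism it is exactly $1$. Summing local degrees over the finite fiber gives $\deg u = \sum_{p \in u^{-1}(q)} d_p \geq 2$, contradicting $\deg u = 1$. Hence no such $C$ exists, $\mathcal S_u = \mathcal S_u'$, and the set of critical values of $u$ is exactly $M_1$.

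With $\mathcal S_u = \mathcal S_u'$ in hand the diffeomorphism assertion is essentially formal. Every point of $M \setminus M_1$ is a regular value, and since $u$ is pseudoholomorphic each regular point carries local degree $+1$ (because $\det(du)_{\mathbb R} = |\det(du)_{\mathbb C}|^2 > 0$), so every regular value has exactly $\deg u = 1$ preimage; thus $u|_{X \setminus u^{-1}(M_1)}$ is a bijective local diffeomorphism onto $M \setminus M_1$, hence a diffeomorphism. For connectedness of $u^{-1}(m)$ when $m \in M_1$, I would run a degree count near the fiber. By Proposition~\ref{4to4fiber} the fiber is a $1$-subvariety together with finitely many points; a nonconstant pseudoholomorphic map of equidimensional almost complex manifolds is open, so each compact connected component $Z$ of $u^{-1}(m)$ contributes a strictly positive local degree $\deg_Z u \ge 1$, and $\deg u = \sum_Z \deg_Z u$. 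Since $\deg u = 1$, the fiber has a single connected component of local degree $1$; because $m \in M_1$ forces a contracted curve over $m$, that component is the curve and there are no isolated points, which gives the desired connected $J$-holomorphic $1$-subvariety.

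I expect the main obstacle to be precisely the local statement that a non-immersed point isolated in its fiber has local degree at least $2$, together with the openness of pseudoholomorphic maps at singular points. Both are local-model questions, and it is exactly here that the absence of holomorphic coordinates makes the analysis delicate; they are the only places where genuinely pseudoholomorphic, rather than purely topological, input enters the argument.
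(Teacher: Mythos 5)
Your outline has the right shape and you have correctly located the heart of the matter (no component of $\mathcal S_u$ may survive as a curve in $M$), but the step you flag as ``the main obstacle'' is a genuine gap rather than a deferred technicality. Your proof that $\mathcal S_u=\mathcal S_u'$ rests on the claim that a non-immersed point isolated in its fiber has local degree at least $2$, and your connectedness argument rests on every connected component of a fiber contributing local degree at least $1$ (openness). Both are standard for finite holomorphic germs $(\mathbb C^2,0)\to(\mathbb C^2,0)$, but neither is proved in the paper nor follows from the results you quote when $J$ is not integrable: positivity of the Jacobian only gives local degree $\ge 0$, and ruling out local degree $0$ (the image of a small ball failing to cover a neighbourhood of $u(p)$) or local degree $1$ (a generically injective but non-immersed germ, which certainly occurs for merely smooth maps, e.g.\ the real map $(x,y)\mapsto(x^3,y)$) requires genuinely two-complex-dimensional local analysis that is not available off the shelf. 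As written, the argument is therefore incomplete exactly at its crux.

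It is worth comparing with how the paper avoids needing these local statements. For connectedness of fibers, and for showing that fibers over $u(\mathcal S_u)$ away from $M_1$ are single points, it uses only point-set topology: since $\deg u=1$, the map $u$ is a bijective local diffeomorphism over the complement $M_3$ of a codimension-two subset, so two disjoint neighbourhoods of two pieces of a single fiber would each have to contain the one open set $u^{-1}(\mathcal N_m\cap M_3)$, contradicting that $X$ is Hausdorff. To kill the non-contracted part $M_2$ of $u(\mathcal S_u)$ (your $\mathcal S_u\neq\mathcal S_u'$ scenario), it slices by an embedded $J_M$-holomorphic disk $D_v$ transverse to $M_2$ at a nonsingular point of it; the preimage is again a disk, the restricted map is degree one and holomorphic away from one point, the removable singularity theorem makes it holomorphic everywhere, and a degree-one holomorphic self-map of disks is a biholomorphism. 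This one-variable reduction is precisely the substitute for your local multiplicity claim; if you want to keep your route, you would need to prove the two local statements, and the disk-slicing trick is the cheapest known way to do that.
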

\begin{proof}
Each component of the $J$-holomorphic $1$-subvariety $\mathcal S_u$ is either contracted to a point or mapped onto a $J_M$-holomorphic $1$-subvariety, as in Lemma \ref{decX}. 

We can decompose $M$ into three parts. The first part, $M_1$, contains points whose preimages contain (non-trivial) $J$-holomorphic $1$-subvarieties. There are only finitely many such points. The second part, $M_2$, consists of points in $u(\mathcal S_u)$, but not in the first part. The closure of second part is a (compact) $J_M$-holomorphic $1$-subvariety. The third part, $M_3$, is the complement of the first two parts. It is an open subset of $M$ where $u$ is a diffeomorphism since $\deg u=1$.

We now show that, for any $m\in M_2$, $u^{-1}(m)$ is a single point. Suppose there are two points $x, y\in X$ such that $u(x)=u(y)=m$. Take an open neighborhood $\mathcal N_m\subset M$ of $m$, such that it does not contain any point in $M_1$. Then $\mathcal N_m\cap M_3$ is a complement of a relative compact set of codimension two. For any disjoint open neighborhoods $\mathcal N_x, \mathcal N_y\subset X$ of $x, y$, such that $u(\mathcal N_x), u(\mathcal N_y)\supset \mathcal N_m$, we know they overlap at a non-empty open subset $u^{-1}(\mathcal N_m\cap M_3)$. This contradicts the fact that $X$ is Hausdorff. 

Finally, by a similar argument, we can show that, for any $m\in M_1$, $u^{-1}(m)$ is a connected $J$-holomorphic $1$-subvariety. Suppose, other than a connected $J$-holomorphic $1$-subvariety $C$, there is an isolated point $x\in X$ or another connected $J$-holomorphic $1$-subvariety $C'$ disconnected from $C$ in $u^{-1}(m)$. We then similarly choose open neighborhood $\mathcal N_m\subset M$ of $m$ and disjoint neighborhoods $\mathcal N_x$ (or $\mathcal N_{C'}$) and $\mathcal N_C\subset X$ of $x$ (or $C'$) and $C$. Again, $u^{-1}(\mathcal N_m\cap M_3)$ would be a common open subset of both $\mathcal N_x$ (or $\mathcal N_{C'}$) and $\mathcal N_C$, contradicting  the fact that $X$ is Hausdorff. 

To summarize, for the finitely many points in $M_1$,  $u^{-1}(m)$ is a connected $J$-holomorphic $1$-subvariety. For any point in $M\setminus M_1=M_2\cup M_3$, the preimage under $u$ is a single point.

Since $\mathcal S_u=u^{-1}(M_1\cup M_2)$, now we want to show $M_2=\emptyset$, which would then imply $u|_{X\setminus u^{-1}(M_1)}$ is a diffeomorphism. As shown above, $u|_{u^{-1}(M_2)}$ is a degree one pseudoholomorhic map, which is non-singular except at possibly finitely many points. Denote these nonsingular points by $M_2'$. The differential $du$ is non-vanishing at the tangent direction of $TM_2'$. Then for any point $p\in M_2'$, and any complex direction $v\in T_pM$ which is not tangent to $M_2$, we choose an embedded $J_M$-holomorphic disk $D_v$ tangent to $v$ and such that $D_v\cap (M_1\cup M_2)=\{p\}$. The preimage $D_v':=u^{-1}(D_v)$ is also homeomorphic to a disk. Moreover, $u|_{D_v'}: D_v'\rightarrow D_v$ is a degree one map and holomorphic on $D_v'\setminus u^{-1}(p)$. Since the differential is bounded on $D_v'$, by the removable singularity theorem, $u|_{D_v'}: D_v'\rightarrow D_v$ is a holomorphic map. Since $D_v$ is embedded, the differential $du(v)\ne 0$. It implies $u$ is also diffeomorphic at $M_2'$. Hence $M_2'=\emptyset$ and there are only finitely many points in $M_2=M_2\setminus M_2'$. Since the closure of $M_2$ is a compact $J_M$-holomorphic $1$-subvariety, the only possibility is $M_2=\emptyset$.

We thus complete the proof of Zariski's main theorem for degree one pseudoholomorphic maps between closed almost complex $4$-manifolds.
\end{proof}

We will continue the notation $M_i\subset M, i=1, 2, 3$, defined in the above proof in the following. By the above discussion, it is clear that any degree one pseudoholomorphic map $u: (X, J) \rightarrow (M, J_M)$ is a composition of $\#\{M_1\}$ many pseudoholomorphic maps such that each of them has only one point whose preimage is not a point. Let us study the preimage of $M_1$.

\begin{prop}\label{rcpre}
Let $u: (X, J) \rightarrow (M, J_M)$ be a degree one pseudoholomorphic map between closed almost complex $4$-manifolds. Then the fundamental group of the $J$-holomorphic $1$-subvariety $u^{-1}(M_1)$ is trivial. Equivalently, each irreducible component of the $J$-holomorphic $1$-subvariety $u^{-1}(M_1)$ is a smooth rational curve and there are no cycles enclosed by different irreducible components.
\end{prop}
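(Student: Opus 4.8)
The plan is to reduce the statement to the simple connectivity of a single collapsed fiber, and then to read off the topology of that fiber from its complement, which the degree one hypothesis forces to be a punctured ball.

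First I would fix a point $m\in M_1$ and set $E=u^{-1}(m)$, which is a connected $J$-holomorphic $1$-subvariety by Proposition \ref{Zarmain}. Since $u^{-1}(M_1)$ is the disjoint union of the fibers $u^{-1}(m)$ over the finitely many $m\in M_1$, it suffices to show that each such $E$ is simply connected. Choose a small closed ball $B\subset M$ centered at $m$ with $B\cap M_1=\{m\}$, and put $N=u^{-1}(B)$, a compact neighborhood of $E$ in $X$. Because $\partial B\subset M\setminus M_1$ and, by Proposition \ref{Zarmain}, $u$ restricts to a diffeomorphism $X\setminus u^{-1}(M_1)\to M\setminus M_1$, the restriction $u\colon N\setminus E\to B\setminus\{m\}$ is a diffeomorphism. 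In particular $N$ is a compact $4$-manifold with boundary $\partial N\cong\partial B=S^3$, and $N\setminus E\cong B\setminus\{m\}$ is homotopy equivalent to $S^3$; hence $\pi_1(N\setminus E)=1$.

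The topological heart of the argument is then short. As $E$ is a compact set of real dimension $2$ contained in the interior of the $4$-manifold $N$, any loop in $N$ may be pushed off $E$ by general position, so the inclusion induces a surjection $\pi_1(N\setminus E)\twoheadrightarrow\pi_1(N)$; combined with the previous step this gives $\pi_1(N)=1$. Taking $B$ small enough that $N$ is a regular neighborhood of $E$, the inclusion $E\hookrightarrow N$ is a homotopy equivalence (indeed $N$ deformation retracts onto $E$), whence $\pi_1(E)=\pi_1(N)=1$. Running this over every $m\in M_1$ shows that $u^{-1}(M_1)$ has trivial fundamental group. Finally I would translate simple connectivity into the stated combinatorial description by van Kampen's theorem applied to the configuration $E=\cup_i C_i$. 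Writing each $C_i=\phi_i(\Sigma_i)$ with $\Sigma_i$ of genus $g_i$, the support $|E|$ is obtained from $\sqcup_i\Sigma_i$ by identifying the finitely many points lying over the self-intersections of each $C_i$ and over the intersections $C_i\cap C_j$; letting $G$ be the associated incidence graph (vertices the components, edges the identification points) one obtains $\pi_1(|E|)\cong\big(\ast_i\pi_1(\Sigma_i)\big)\ast F_{b_1(G)}$, where $F_{b_1(G)}$ is free of rank the first Betti number of $G$. Thus $\pi_1(E)=1$ holds exactly when every $g_i=0$ and $b_1(G)=0$, i.e. when each component is a smooth rational curve (genus zero and free of nodes) and the dual graph is a tree with no cycles among distinct irreducible components.

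The step I expect to require the most care is the passage from $\pi_1(N)$ to $\pi_1(E)$ together with the general-position push-off, since $E$ is a priori singular. Both rest on the tameness of $J$-holomorphic $1$-subvarieties, namely smoothness away from finitely many singular points together with the standard local disk models of pseudoholomorphic curves; this is what guarantees that $N=u^{-1}(B)$ is a genuine regular neighborhood deformation retracting onto $E$, and that the codimension-two complement $N\setminus E$ surjects onto $\pi_1(N)$. Everything else is formal once Proposition \ref{Zarmain} identifies $N\setminus E$ with a punctured ball.
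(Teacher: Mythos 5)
Your argument is correct and rests on the same geometric input as the paper's proof --- namely that Proposition \ref{Zarmain} identifies $u^{-1}(B)\setminus E$ with the punctured ball $B\setminus\{m\}\simeq S^3$, so the complement of the fiber is simply connected --- but you transfer this information to $E$ by a different and more elementary mechanism. The paper first blows up $C_m=E$ topologically to a configuration $C_m'$ with only transverse double points, sandwiches $u^{-1}(B)$ between two genuine tubular neighborhoods $A'\subset K_\delta'\subset A$ to deduce that the boundary fundamental group $\pi_1(\partial A)$ is trivial, and then runs the pair sequence $H^1(A,\partial A)\to H^1(A)\to H^1(\partial A)$ together with Poincar\'e--Lefschetz duality ($H^1(A,\partial A)\cong H_3(A)=H_3(C_m')=0$) to conclude $H^1(C_m')=0$, which for such configurations is equivalent to the stated conclusion. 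You instead stay at the level of $\pi_1$: the general-position surjection $\pi_1(N\setminus E)\twoheadrightarrow\pi_1(N)$ (valid since $E$ has dimension $2$ and $1+2<4$) replaces the duality computation, and van Kampen replaces the passage from $H^1=0$ back to the combinatorial description; this is cleaner and gives $\pi_1(E)=1$ directly rather than through $H_1$. The one place where you are slightly cavalier is the assertion that $u^{-1}(B)$ \emph{is} a regular neighborhood of $E$ for $B$ small: that is not automatic, and it is exactly the point the paper handles with its sandwich. The fix is easy and worth recording: take any neighborhood $A$ of $E$ admitting a deformation retraction $r\colon A\to E$ (this exists because $E$ is locally a cone on its link by the local structure of pseudoholomorphic curves), shrink $B$ so that $N=u^{-1}(B)\subset A$, and observe that $r\circ(\text{inclusion})=\mathrm{id}_E$ forces $\pi_1(E)\to\pi_1(N)$ to be injective; since your steps give $\pi_1(N)=1$, you conclude $\pi_1(E)=1$ without needing $N$ itself to retract onto $E$. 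With that adjustment the proof is complete.
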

\begin{proof}
For any point $m\in M_1$, denote the $J$-holomorphic $1$-subvariety $u^{-1}(m)$ by $C_m$.  The configuration of $C_m$ might have triple intersections or tangent points of different irreducible components, or singular points of some irreducible components. We first topologically blow up the configuration of $C_m$ at these points. We denote the new surface configuration by $C_m'$. For any tubular neighborhoods $A_1$ of $C_m$ and $A_2$ of $C_m'$, the following pairs are homeomorphic $A_1\setminus C_m \cong A_2\setminus C_m'$ and $\partial A_1\cong \partial A_2$. The statement of our proposition follows if we can show $H^1(C_m')=0$ for the new surface configuration. 

 For any open ball $B_{\delta}$ around $m$ such that $B_{\delta}\cap M_1=\{m\}$, let $K_{\delta}=u^{-1}(B_{\delta})$.  Denote by $K_{\delta}'$ the image of $K_{\delta}$ after topological blowup.  We have $\pi_1(K_{\delta}'\setminus C_m')=\pi_1(K_{\delta}\setminus C_m)=\pi_1(B_{\delta}\setminus \{m\})=1$. 
There exists a tubular neighborhood $A'$ of $C_m'$, such that $A'\subset K_{\delta}'\subset A$ where $A$ is obtained from $A'$ by multiplying the normal distance by a fixed positive number $r>1$. Thus any path in $A\setminus C_m'$ is homotopic to a path in $A'\setminus C_m'$ which is nullhomotopic in $A\setminus C_m'$ because $\pi_1(K_{\delta}'\setminus C_m')=1$. This implies $\pi_1(\partial A)=1$ and it is independent of the choice of the neighborhood. Hence we say the ``boundary fundamental group" of $C_m'$ is trivial.

The tubular neighborhood $A$  deformation retracts to $C_m'$. Thus $H_i(A)=H_i(C_m')$. We have the exact sequence $$H^1(A, \partial A)\rightarrow H^1(A)\rightarrow H^1(\partial A)=0.$$
By Poincar\'e duality $H^1(A, \partial A)=H_3(A)=0$. Hence $H^1(C_m')=H^1(A)=0$ 
and all the conclusions follow.
\end{proof}

The topological  blowdown is the reverse action of the topological blowup. It contracts a smooth sphere of self-intersection $-1$. The topological blowup and blowdown for surface configurations can actually be performed holomorphically, at least locally, for  pseudoholomorphic $1$-subvarieties (see Theorem $3$ in \cite{Sik} and the proof of Theorem \ref{phblowdown}). Later, we say a surface configuration (resp. a pseudoholomorphic $1$-subvariety) is {\it equivalent} to another if they are related by topological (resp. locally holomorphic) blowups and blowdowns.

To describe a finer structure of a pseudoholomorphic exceptional subset, we introduce the following definition, see \cite{BZ, ES}. 

\begin{definition}\label{exc1st}
An exceptional curve of the first kind in a complex surface $X$ is a divisor $E$ for which there is a birational map $\pi: X\rightarrow M$ to a smooth complex surface $M$ and a point $m\in M$ such that $\pi^{-1}(m)=E$. 

Let $X$ be a $4$-manifold, and $\Theta$ the image of a continuous map from a nodal Riemann surface of genus zero. We say that $\Theta$ is an exceptional curve of the first kind if there exists a neighborhood $(N, J)$ of $\Theta$ where $J$ is an integrable complex structure, an open neighborhood $N'$ of $0\in \mathbb C^2$, and a holomorphic birational map $\pi: N\rightarrow N'$, such that $\Theta$ is a $J$-holomorphic $1$-subvariety and $\pi^{-1}(0)=\Theta$.
\end{definition}

The exceptional curves of the first kind are treated systematically in \cite{BZ}. Any such curve could be obtained by a sequence of blowups at points. It implies that any two irreducible components intersect at most once transversely and have no triple intersections. There is at least one $-1$-sphere amongst the irreducible components. The following should be well known.

\begin{lemma}\label{excbd}
Blowing down a $-1$-sphere from an exceptional curve of the first kind gives another exceptional curve of the first kind.
\end{lemma}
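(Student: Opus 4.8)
The plan is to work entirely inside the local holomorphic model provided by Definition \ref{exc1st} and to reduce the statement to the classical factorization of a birational morphism of smooth surfaces through the contraction of a $(-1)$-curve. So let $\Theta$ be an exceptional curve of the first kind, with a neighborhood $(N, J)$ where $J$ is integrable, a ball $N'\ni 0$ in $\mathbb C^2$, and a holomorphic birational map $\pi\colon N\to N'$ with $\pi^{-1}(0)=\Theta$; as recalled before the lemma (see also \cite{BZ}), $\pi$ is a finite composition of point blowups. Let $E\subset\Theta$ be the $(-1)$-sphere to be contracted. First I would contract $E$ holomorphically: by the complex-analytic Castelnuovo/Grauert contractibility criterion there is a holomorphic map $q\colon N\to\hat N$ onto a smooth complex surface $\hat N$ that collapses $E$ to a point $\hat p$, is biholomorphic on $N\setminus E$, and realizes $N$ as the blowup of $\hat N$ at $\hat p$. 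This is precisely the local holomorphic blowdown furnished by Theorem 3 of \cite{Sik}.

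The key step is then to factor $\pi$ through $q$. Since $E\subset\pi^{-1}(0)$, the map $\pi$ is constant on $E$, hence constant on every fiber of $q$ (the only non-trivial fiber being $E$). By the rigidity lemma for blowdowns (equivalently, $q_*\mathcal O_N=\mathcal O_{\hat N}$ because $\hat N$ is smooth and $q$ has connected fibers), $\pi$ descends to a holomorphic map $\pi'\colon\hat N\to N'$ with $\pi=\pi'\circ q$; since $\pi$ and $q$ are both birational, so is $\pi'$. Consequently
\[
(\pi')^{-1}(0)=q\big(\pi^{-1}(0)\big)=q(\Theta)=:\hat\Theta,
\]
which is exactly the blowdown of the configuration $\Theta$ along $E$, now sitting in the smooth surface $\hat N$ as the full fiber over $0$ of a holomorphic birational map.

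To conclude that $\hat\Theta$ is again an exceptional curve of the first kind in the sense of Definition \ref{exc1st}, it remains to verify that $\hat\Theta$ is the image of a nodal genus-zero Riemann surface, i.e.\ that contracting $E$ introduces no triple point. Here I would argue from the blowup history: tracking $E$ through the sequence $N\to\cdots\to N'$, its self-intersection drops by one for each later blowup centered on it, so a self-intersection of $-1$ in $N$ forces that no later center lay on $E$. Hence the components meeting $E$ are exactly the curves passing through the point blown up to create $E$, and by the no-triple-point property at the creation stage there are at most two of them, met transversely at distinct points. Contracting such a $(-1)$-sphere produces at most an ordinary node and keeps the dual graph a tree of rational curves, so $\hat\Theta$ is nodal of genus zero, and therefore an exceptional curve of the first kind.

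I expect the main obstacle to be the factorization step: one must ensure that $\pi$ genuinely descends along $q$ in the holomorphic—not merely topological—category, which is exactly where the rigidity of the blowdown enters. The combinatorial verification of the nodal condition is a secondary but genuinely necessary point, since without the ``at most two neighbors'' bound the contraction could a priori create a non-nodal singularity and push $\hat\Theta$ outside the class of configurations admitted by Definition \ref{exc1st}.
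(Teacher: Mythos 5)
Your proof is correct, but it takes a genuinely different route from the paper's. You work directly in the local holomorphic model of Definition \ref{exc1st}: contract $E$ by the Castelnuovo--Grauert criterion to a smooth point of a new surface $\hat N$, factor $\pi=\pi'\circ q$ by rigidity of the blowdown, and read off $(\pi')^{-1}(0)=\hat\Theta$, so the definition is verified verbatim; you then check separately that the contraction creates no triple point or tangency, using that a $-1$-component has no later blowup centers on it, hence meets at most two other components at distinct points (and these two cannot already meet elsewhere, since the dual graph is a tree). The paper instead argues purely combinatorially at the level of homology: it encodes the configuration by classes $E_1,\dots,E_n$ with $E_i^2=-1$, $E_i\cdot E_j=0$ for $i\ne j$, and a strict upper-triangular incidence matrix $(m_{ij})$ with $[C_i]=E_i-\sum_{j>i}m_{ij}E_j$; a $-1$-sphere is precisely a $C_l$ with $m_{lj}=0$ for all $j>l$, and deleting the $l$-th row and column yields data of the same shape, hence again an exceptional curve of the first kind. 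Your version more directly certifies the definition as literally stated (existence of the local birational morphism onto a ball), whereas the paper's bookkeeping has the side benefit of producing the explicit orthonormal basis $E_i^m$ that is reused in the proof of Corollary \ref{diffbd}. Both arguments are sound.
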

\begin{proof}
Essentially, we only need to show that we can continue the blowdown process until no curves are left, whatever the order of blowdowns we choose. We call the starting exceptional curve of the first kind $\Theta=\{(C_i, m_i)\}_{i=1}^n$.

Since an exceptional curve of the first kind is obtained from consecutive blowups, the cohomology class of the corresponding pseudoholomorphic subvariety is a $-1$-rational curve class $E_1$. In fact, this process is equivalent to the following choices of homology classes: there are $n$ second cohomology classes $E_1, \cdots, E_n$ which can be represented by embedded symplectic spheres with $E_i^2=-1$, $i=1, \cdots, n$, and $E_i\cdot E_j=0$ for $i\ne j$, such that $[C_i]=E_i-\sum_{j> i} m_{ij}E_j$. Here $(m_{ij})_{i, j=1}^n$ is a strict upper triangular square matrix whose entries are $0$ or $1$. The connectedness of $\Theta$ is equivalent to saying that there does not exist $j$ such that $m_{ij}=m_{ji}=0, \forall i=1, \cdots, n$. Moreover, we have $\sum m_i[C_i]=E_1$. The inverse of the original blowup process to get $\Theta$ is to blow down $C_n, \cdots, C_1$ consecutively. 

We can also start blowing down from some $C_l$ where $m_{lj}=0$ for $\forall j>l$. After that, we can relabel $C_i, E_i$ from $1$ to $n-1$ without changing the original order. The new matrix $(m'_{ij})_{i, j=1}^{n-1}$ is obtained from $(m_{ij})_{i, j=1}^n$ by deleting $l^{th}$-row and column. Hence, the new subvariety obtained from blowing down $C_l$ is still an exceptional curve of the first kind.
 \end{proof}

We end the digression. 

The following theorem shows that a degree $1$ map is a suitable alternative of a blowdown morphism in the pseudoholomorphic setting. It gives an affirmative answer to Question 5.4 of \cite{Zadv}.

\begin{theorem}\label{phblowdown}
Let $u: (X, J) \rightarrow (M, J_M)$ be a degree one pseudoholomorphic map between closed almost complex $4$-manifolds such that $J$ is compatible with a symplectic structure $\omega$ on $X$. Then other than finitely many points $M_1\subset M$, $u|_{X\setminus u^{-1}(M_1)}$ is a diffeomorphism. At each point of $M_1$, the preimage supports an exceptional curve of the first kind. 
\end{theorem}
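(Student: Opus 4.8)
The first assertion is precisely the content of Zariski's main theorem for degree one maps, Proposition \ref{Zarmain}: away from a finite set $M_1\subset M$ the map $u$ is a diffeomorphism, and for each $m\in M_1$ the fibre $C_m:=u^{-1}(m)$ is a connected $J$-holomorphic $1$-subvariety. By Proposition \ref{rcpre} every irreducible component $C_i$ of $C_m$ is a smooth rational curve, the dual graph is a tree, and the boundary fundamental group is trivial. Fixing a small ball $B_\delta$ with $B_\delta\cap M_1=\{m\}$ and setting $K_\delta:=u^{-1}(B_\delta)$, the diffeomorphism on the complement identifies $\partial K_\delta$ with $\partial B_\delta\cong S^3$, so the link of the configuration is $S^3$. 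Thus the theorem reduces to showing that each $C_m$ is an exceptional curve of the first kind in the sense of Definition \ref{exc1st}, and by Definition \ref{exc1st} it suffices to produce an integrable complex structure on a neighbourhood of $C_m$ together with a holomorphic birational morphism contracting $C_m$ to a smooth point.

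The crucial step, and the place where the almost K\"ahler hypothesis is used, is Grauert's criterion: the intersection matrix $(C_i\cdot C_j)$ is negative definite. The plan is to localise and cap off symplectically. Since $J$ is compatible with $\omega$, the components $C_i$ are symplectic spheres of positive area, and after choosing a Liouville field near $\partial K_\delta$ we may take this boundary to be of contact type; as $(K_\delta,\omega)$ is a symplectic filling of $S^3$, the induced contact structure is tight, hence the standard one. I would then cap the convex boundary of $K_\delta$ with the standard concave piece $\mathbb{CP}^2\setminus\mathrm{int}\,B^4$ to obtain a closed symplectic $4$-manifold $R=K_\delta\cup_{S^3}(\mathbb{CP}^2\setminus\mathrm{int}\,B^4)$. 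Because $R$ contains the line $H$ with $H^2=1>0$, a symplectic sphere of positive self-intersection, McDuff's structure theorem forces $R$ to be rational or ruled, so $b^+(R)=1$. A Mayer--Vietoris argument across $S^3$ (which has no $H_1$ or $H_2$) splits $H_2(R;\mathbb R)$ orthogonally as $V\oplus H_2(\mathrm{cap};\mathbb R)$ with $V:=\langle[C_i]\rangle$, whence $b^+(V)=b^+(R)-b^+(\mathrm{cap})=1-1=0$ and $V$ is negative semidefinite. Finally Lefschetz duality for $(K_\delta,\partial K_\delta)$ together with $\partial K_\delta\cong S^3$ shows that the form on $V=H_2(K_\delta)$ is unimodular, hence nondegenerate, so $V$ is in fact negative definite.

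With negative definiteness in hand I would contract $C_m$. Since the configuration is a negative definite tree of rational curves whose link is $S^3$, Grauert's contractibility theorem produces a normal surface singularity, and its link being the simply connected $S^3$ forces the singular point to be smooth by Mumford's criterion. To realise this in the almost complex category I would invoke the holomorphic blow-down model of Theorem 3 of \cite{Sik}: a $J$-holomorphic $(-1)$-sphere (which exists at every stage of such an $S^3$-bounding plumbing) can be blown down holomorphically, and by Lemma \ref{excbd} the resulting configuration is again a negative definite tree of rational curves bounding $S^3$, so the process terminates. This yields an integrable complex structure on a neighbourhood $N$ of $C_m$ and a holomorphic birational morphism $\pi:N\to N'$ onto a neighbourhood of $0\in\mathbb{C}^2$ with $\pi^{-1}(0)=C_m$, which is exactly the statement that $C_m$ is an exceptional curve of the first kind.

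The main obstacle is the negative definiteness step, and specifically the symplectic capping that embeds a neighbourhood of $C_m$ into a rational surface with $b^+=1$. One must check that $\partial K_\delta$ can be made a standard tight contact $S^3$ so that the gluing to the $\mathbb{CP}^2$-cap is symplectic, and then extract $b^+(R)=1$ from McDuff's theorem rather than assuming it. This is the precise point at which compatibility of $J$ with $\omega$, rather than mere tameness, is invoked, and it is the reason the author expects the hypothesis to be removable only at the cost of a different argument for definiteness.
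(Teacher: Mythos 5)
Your proposal is correct and, for the central step, coincides with the paper's: both establish Grauert's criterion (negative definiteness of $Q_{C_m}$) by capping a convex neighbourhood of the fibre with the standard concave piece of $\mathbb{CP}^2$, landing in a rational surface with $b^+=1$, and both invoke Theorem 3 of \cite{Sik} to get an integrable structure near $C_m$. The paper realises the cap via the McCarthy--Wolfson gluing theorem, transporting the standard tight contact structure on the boundary of a small $J_M$-convex ball in $M$ through the diffeomorphism $u|_{\partial K_\delta}$ rather than constructing a Liouville field on $X$ directly, which is a cleaner way to see that $\partial K_\delta$ is standard; and it deduces definiteness from $Q_{C_m}\oplus(1)$ sitting inside the unimodular form of a closed manifold with $b^+=1$, whereas your Lefschetz-duality/unimodularity argument on $(K_\delta,S^3)$ is an equally valid (arguably tidier) finish. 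Where you genuinely diverge is the endgame: the paper resolves $C_m$ to a transverse configuration, applies Proposition 4.4 of \cite{LM} (resting on Shastri's classification of negative definite graphs by their boundary fundamental groups) together with the triviality of $\pi_1(\partial A)$ from Proposition \ref{rcpre} to force the graph to be of type (N1), and then blows back down via Lemma \ref{excbd}; you instead contract directly by Grauert's theorem and conclude smoothness of the resulting normal singularity from Mumford's criterion applied to the simply connected link $S^3$. The Grauert--Mumford route is the classical complex-analytic argument and is perfectly valid here once Sikorav's integrable model is in place; it also repairs the one thin spot in your write-up, namely the unproved assertion that a $(-1)$-sphere exists at every stage of the blowdown --- that fact is exactly what the paper extracts from the \cite{LM}/\cite{Sha} classification, but it follows for free from your contraction-to-a-smooth-point conclusion via the factorisation of birational morphisms between smooth surfaces. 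The trade-off is that the paper's combinatorial route stays closer to the symplectic/almost complex toolkit already developed (and feeds directly into Lemma \ref{excbd} and Corollary \ref{diffbd}), while yours leans on two substantial theorems from the analytic theory of surface singularities.
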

\begin{proof}
Other than the last conclusion, the statements follow from Propositions \ref{Zarmain} and \ref{rcpre}. 

For any point $m\in M_1$, we still denote the $J$-holomorphic $1$-subvariety $u^{-1}(m)$ by $C_m$. We denote the intersection matrix for $C_m$ by $Q_{C_m}$. It is a symmetric square matrix whose size is the number of irreducible components of $C_m$. We want to show that this matrix $Q_{C_m}$ is negative definite, as a generalization of Grauert's criterion for exceptional sets of analytic maps. 

We use a gluing result of McCarthy-Wolfson \cite{McW}. Let $Y$ be $(2n-1)$-dimensional submanifold of a $2n$-dimensional symplectic manifold $(X, \omega_X)$. Suppose $Y$ admits a fixed point free $S^1$ action. The manifold $Y$ is called $\omega_X$-compatible if the orbits of the action lie in the null direction of $\omega_X|_Y$. If $Y$ is a separating hypersurface, let $X^-$ be the piece for which $Y$ is the $\omega_X$-convex boundary and $X^+$ be the other piece. Then we have the following gluing result. We only state it for $\dim Y=3$, since it is what we need in the proof.

\begin{theorem}[McCarthy and Wolfson]\label{McWglue}
Let $Y$ be a Seifert $3$-manifold, {\it i.e.} a compact manifold with a fixed point free $S^1$ action. Let $(X_i, \omega_i)$, $i=1, 2$, be symplectic $4$-manifolds,  and suppose that there are $\omega_i$-compatible embeddings $j_i: Y\rightarrow (X_i, \omega_i)$ such that $j_i(Y)$ is a separating hypersurface in $X_i$. Then there is a symplectic structure $\omega$ on $\tilde X=X_1^-\cup_Y X_2^+$. Moreover, there are neighborhoods $N_i(Y)$ of $Y$ in $X_i$ such that $\omega=\omega_2$ on $X_2^+\setminus N_2(Y)$ and $\omega=c\omega_1$ on $X_1^-\setminus N_1(Y)$ for some positive constant $c$.
\end{theorem}

We choose an open ball neighborhood $\mathcal N_m$ of $m\in M_1$ such that the boundary is $J_M$-convex and $\mathcal N_m\cap M_1=\{m\}$. We can choose $\mathcal N_m$ such that it is contained in a neighborhood of $m$ such that there exists a symplectic form compatible with $J_M$. The induced contact structure on $\partial \mathcal N_m$ is the unique tight contact structure on $S^3$. This is our $Y$ whose fixed point free $S^1$-action is induced by Reeb orbits. Hence, $\mathcal N_m$ could be capped (by a concave neighborhood of $+1$-sphere) to a symplectic $\mathbb CP^2$. This is our $X_2$. 

We take $X_1$ to be our $X$ with the $J$-compatible symplectic form $\omega_X$. The preimage $u^{-1}(\partial\mathcal N_m)$ is diffeomorphic to $S^3$. Moreover, since $u$ is pseudoholomorphic and $u|_{u^{-1}(\partial\mathcal N_m)}$ is a diffeomorphism, the $J$-lines provide a contact structure on $u^{-1}(\partial\mathcal N_m)$ which is contactomorphic to the one on $\partial \mathcal N_m$ induced by $J_M$-lines. Hence, we can apply Theorem \ref{McWglue} to obtain a symplectic manifold $\tilde X=X_1^-\cup_Y X_2^+$. 

Since $X_2^+$ contains a symplectic sphere $S$ of self-intersection $1$, the symplectic $4$-manifold $\tilde X$ is diffeomorphic to $\mathbb CP^2\# k\overline{\mathbb CP^2}$, {\it i.e.} a rational $4$-manifold \cite{McD90}. In particular, $b^+(\tilde X)=1$. Since $C_m$ is disjoint from $S$, we have $Q_{C_m\cup S}=Q_{\C_m}\oplus (1)$ as a sub matrix of the intersection matrix of $\tilde X$. It implies the matrix $Q_{C_m}$ corresponding to $C_m$ is negative definite.

Hence, we can apply Proposition 4.4 of \cite{LM} (its proof eventually follows from \cite{Sha}). We first topologically blow up our configuration $C_m$ such that every intersection of the new configuration $C_m'$ is transverse and there is no triple intersection. We can actually realize this step by complex blowups. By Theorem $3$ of \cite{Sik}, there exists another (tamed) almost complex structure $J'$ on $X$ for which $C_m$ is $J'$-holomorphic and $J'$ is integrable on a neighborhood of $C_m$. Then we can apply the above topological blowups in a complex way for this almost complex structure $J'$.

After blowups, $Q_{C_m'}$ is still negative definite.  For a  surface configuration $C$ with each intersection transverse and having no triple intersections, one can associate a weighted finite graph $\Gamma_C$ whose vertices represent the surfaces and each edge joining two vertices represents an intersection between the two surfaces corresponding to the two vertices. Moreover, each vertex is weighted by its genus and its self-intersection number. 

Applying Proposition 4.4 of \cite{LM} to the graph $\Gamma_{C_m'}$, it implies $\Gamma_{C_m'}$ is equivalent to a graph of type (N) as listed in \cite{LM}. The type (N) graphs have three kinds. The graph (N1) is the empty graph. Its boundary fundamental group is trivial. The type (N2) graphs are linear graphs. The boundary fundamental groups are non-trivial cyclic groups. The type (N3) graphs are star shapes with one branching point and three branches. The boundary fundamental groups are non-cyclic finite groups.  Since boundary fundamental groups of types (N2) and (N3) are non trivial, we know the underlying graph $\Gamma_{C_m'}$ is equivalent to the empty graph, {\it i.e.} type (N1). In fact, it is equivalent to saying that $C_m'$ is an exceptional curve of the first kind with the multiplicities induced by the process of blowups. 

The $J'$-holomorphic (and also $J$-holomorphic) subvariety $C_m$ is obtained from $C_m'$ by a sequence of complex blowdowns. By Lemma \ref{excbd}, $C_m$ is also an exceptional curve of the first kind. In particular, it implies every intersection of $C_m$ is transverse and there is no triple intersection.

This completes our proof.
\end{proof}

\begin{cor}\label{diffbd}
Let $u: (X, J) \rightarrow (M, J_M)$ be a degree one pseudoholomorphic map between closed almost complex $4$-manifolds such that $J$ is compatible with a symplectic structure $\omega$ on $X$. Then $X=M\#k\overline{\mathbb CP^2}$ diffeomorphically, where $k$ is the number of irreducible components of the $J$-holomorphic $1$-subvariety $u^{-1}(M_1)$.
\end{cor}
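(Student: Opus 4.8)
The plan is to combine the local structure of the fibers of $u$ furnished by Theorem \ref{phblowdown} with the standard identification of a complex blowup with a connected sum, and then to reassemble the global diffeomorphism type. By Theorem \ref{phblowdown} there is a finite set $M_1\subset M$ such that $u|_{X\setminus u^{-1}(M_1)}$ is a diffeomorphism onto $M\setminus M_1$, and for each $m\in M_1$ the fiber $C_m:=u^{-1}(m)$ supports an exceptional curve of the first kind. Writing $k_m$ for the number of irreducible components of $C_m$, the total $k=\sum_{m\in M_1}k_m$ is exactly the number of irreducible components of the $J$-holomorphic $1$-subvariety $u^{-1}(M_1)$.

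First I would analyze a single fiber. By Definition \ref{exc1st} there is a neighborhood $N$ of $C_m$ carrying an integrable complex structure for which $C_m$ is holomorphic, a neighborhood $N'$ of $0\in\mathbb C^2$, and a holomorphic birational morphism $\pi:N\to N'$ with $\pi^{-1}(0)=C_m$. A birational morphism between smooth complex surfaces contracting a curve to a single smooth point factors as a finite composition of point blowups; as recalled in the discussion following Definition \ref{exc1st} and in Lemma \ref{excbd}, each blowup contributes exactly one new irreducible component to the exceptional fiber, so the number of blowups equals $k_m$. Thus $N$ is diffeomorphic to a ball in $\mathbb C^2$ blown up $k_m$ times.

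The second ingredient is the elementary fact that blowing up a point of a smooth oriented $4$-manifold $W$ produces $W\#\overline{\mathbb{CP}^2}$, the exceptional $\mathbb P^1$ becoming a $(-1)$-sphere. Applying this $k_m$ times, a neighborhood $N$ of $C_m$ is diffeomorphic to $B^4\#k_m\overline{\mathbb{CP}^2}$ with $B^4$ a ball; equivalently, $N$ is obtained from a ball $N'$ about $m$ by connect-summing $k_m$ copies of $\overline{\mathbb{CP}^2}$, the identification on the bounding $3$-spheres being the restriction of $u$, which is a diffeomorphism there by Proposition \ref{Zarmain}.

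Finally I would globalize. Since $u$ is a diffeomorphism on the complement of $\bigcup_{m}N$ onto $M$ with a small ball removed about each $m\in M_1$, the manifold $X$ is recovered from $M$ by excising a ball about each $m\in M_1$ and gluing in the corresponding blown-up neighborhood $N$, the gluing being compatible on the boundary spheres by the previous paragraph. Each such replacement is precisely a connected sum with $k_m$ copies of $\overline{\mathbb{CP}^2}$, so performing all of them yields
\[
X\ \cong\ M\ \#\ \Big(\sum_{m\in M_1}k_m\Big)\,\overline{\mathbb{CP}^2}\ =\ M\#k\,\overline{\mathbb{CP}^2}.
\]
The main obstacle is to make the two reassembly steps precise: verifying that the number of blowups equals $k_m$, which follows from the blowup description of exceptional curves of the first kind, and, more delicately, checking that the local connected-sum pictures patch into a single global connected sum. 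The latter is handled by the fact that $u$ restricts to a diffeomorphism on each bounding $3$-sphere, so the connected-sum modifications occur in disjoint balls and may be carried out independently.
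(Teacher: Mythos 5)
Your argument is correct and is essentially the paper's: Theorem \ref{phblowdown} plus the fact that an exceptional curve of the first kind with $k_m$ components has a neighborhood that is a $k_m$-fold blown-up ball, reassembled over the diffeomorphism $u|_{X\setminus u^{-1}(M_1)}$. The paper treats the topological reassembly you spell out as immediate and instead devotes its proof to recording the resulting splitting $H^2(X,\mathbb Z)=u^*(H^2(M,\mathbb Z))\oplus\bigoplus_{m}\bigoplus_i \mathbb Z E_i^m$ via the classes $E_i^m$ from Lemma \ref{excbd}, which gives the same count $k=\sum_m k_m$.
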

\begin{proof}
It clearly follows from Theorem \ref{phblowdown}. Moreover, from the proof of Lemma \ref{excbd}, for the exceptional curve of the first kind $C_m=u^{-1}(m)$, $\forall m\in M_1$, with $n(m)$ irreducible components, we have $n(m)$ cohomology classes $E_1^m, \cdots, E^m_{n(m)}$ in $H^2(X, \mathbb Z)$ which can be represented by embedded symplectic spheres with $(E_i^m)^2=-1$, $i=1, \cdots, n(m)$, and $E_i^m\cdot E_j^m=0$ for $i\ne j$, such that the cohomology class of each irreducible component is $E_i^m-\sum_{j> i} m_{ij}E_j^m$.

Hence, each $\overline{CP^2}$ corresponds to such an $E_j^m$, and $$H^2(X, \mathbb Z)=u^*(H^2(M, \mathbb Z))\bigoplus_{m\in M_1}(\bigoplus_{i=1}^{n(m)} \mathbb Z E^m_{i}),$$ with $k=\sum_{m\in M_1} n(m)$.
\end{proof}

\subsection{Symplectic birational geometry in dimension $6$}\label{symbir}
We could also use our results, mainly Corollary \ref{eICdim4}, to study symplectic birational geometry in dimension $6$. We assume our ambient manifold $M$ is closed.

\begin{prop}\label{disss}
Let $Z_1, Z_2$ be two embedded symplectic $4$-manifolds in a $6$-dimensional symplectic manifold $(M, \omega)$. If $PD[Z_1]\cup PD[Z_2]\cup [\omega]\le 0$, then $Z_1$ and $Z_2$ are $J$-holomorphic simultaneously if and only if $Z_1$ and $Z_2$ are disjoint (in which case we have $[Z_1]\cdot [Z_2]=0$).
\end{prop}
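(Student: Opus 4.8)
The plan is to establish the two implications separately, treating $PD[Z_1]\cup PD[Z_2]\cup[\omega]\le 0$ as a standing hypothesis and reading off $[Z_1]\cdot[Z_2]=0$ from the disjoint case. Throughout I take $Z_1,Z_2$ to be distinct and connected, as in Corollary \ref{eICdim4}.

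For the forward implication I would argue by contradiction. Suppose $J$ is tamed by $\omega$ with $Z_1,Z_2$ both $J$-holomorphic, yet $Z_1\cap Z_2\ne\emptyset$. Since both submanifolds are $4$-dimensional and $Z_2$ has codimension $2$ in $M^6$, Corollary \ref{eICdim4} says $Z_1\cap Z_2$ is either one of the $Z_i$ or supports a $J$-holomorphic $1$-subvariety. The first option would force $Z_1\subseteq Z_2$, hence $Z_1=Z_2$ by connectedness and equidimensionality, which I exclude by distinctness; so the intersection supports a nonempty $J$-holomorphic $1$-subvariety $\Theta=\{(C_i,m_i)\}$. The crux is then an area computation: by Proposition \ref{closedM} the class of $\Theta$ is $e_\Theta=PD_M^{-1}(PD[Z_1]\cup PD[Z_2])$, so
\[
\langle[\omega],e_\Theta\rangle=\langle PD[Z_1]\cup PD[Z_2]\cup[\omega],[M]\rangle\le 0 .
\]
But taming forces $\int_{C_i}\omega>0$ for each nonconstant $J$-holomorphic curve $C_i$, and with $m_i>0$ this gives $\langle[\omega],e_\Theta\rangle=\sum_i m_i\int_{C_i}\omega>0$, a contradiction. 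Hence $Z_1\cap Z_2=\emptyset$; since disjoint submanifolds represent the trivial intersection class, $[Z_1]\cdot[Z_2]=PD_M^{-1}(PD[Z_1]\cup PD[Z_2])=0$.

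For the reverse implication I would start from disjointness, which automatically makes $PD[Z_1]\cup PD[Z_2]=0$, so the standing hypothesis holds vacuously. I would pick disjoint tubular neighborhoods $U_i\supset Z_i$ and, on each, an $\omega$-compatible almost complex structure $J_i$ for which $Z_i$ is $J_i$-holomorphic; this is the standard local model for an embedded symplectic submanifold. Because the bundle of $\omega$-compatible linear complex structures on $TM$ has contractible fibers, the data $(J_1\text{ on }\overline{U_1},\,J_2\text{ on }\overline{U_2})$, supported on disjoint closed sets, extends to a global $\omega$-compatible $J$; then $Z_1$ and $Z_2$ are simultaneously $J$-holomorphic.

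I expect the genuine content to lie entirely in the forward area estimate, where the hypothesis $PD[Z_1]\cup PD[Z_2]\cup[\omega]\le 0$ meets the positivity of symplectic area of $J$-holomorphic curves through Corollary \ref{eICdim4} and Proposition \ref{closedM}; the main thing to watch is the clean exclusion of the degenerate alternative $Z_1\cap Z_2=Z_i$. The reverse direction, resting on contractibility of the space of compatible complex structures together with the symplectic neighborhood theorem, should present no real obstacle.
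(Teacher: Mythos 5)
Your proposal is correct and follows essentially the same route as the paper: apply Corollary \ref{eICdim4} (with the class computation of Proposition \ref{closedM}) to identify $Z_1\cap Z_2$ with a $J$-holomorphic $1$-subvariety in class $PD_M^{-1}(PD[Z_1]\cup PD[Z_2])$, then play positivity of symplectic area against the hypothesis $PD[Z_1]\cup PD[Z_2]\cup[\omega]\le 0$, and in the converse direction extend a standard local compatible $J$ from disjoint neighborhoods of the $Z_i$. Your explicit exclusion of the degenerate alternative $Z_1\cap Z_2=Z_i$ and the spelled-out extension argument are details the paper leaves implicit, but the substance is identical.
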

\begin{proof}
First when $Z_1$ and $Z_2$ are disjoint, $Z_1\cup Z_2$ is an embedded symplectic submanifold in $(M, \omega)$. Hence we can realize it as a $J$-holomorphic submanifold for some $J$ tamed by $\omega$. And we have $[Z_1]\cdot [Z_2]=0$.

On the other hand, if $Z_1$ and $Z_2$ are $J$-holomorphic simultaneously. Then by Corollary \ref{eICdim4}, we know the intersection $Z_1\cap Z_2$ is a $J$-holomorphic $1$-subvariety in class $[Z_1]\cdot [Z_2]$. Since $J$ is tamed, we must have $PD[Z_1]\cup PD[Z_2]\cup [\omega]\ge 0$. The equality holds if and only if $[Z_1]\cdot [Z_2]=0$, which implies $Z_1$ and $Z_2$ are disjoint. 
\end{proof}

Conversely, two $J$-holomorphic submanifolds $Z_1^4, Z_2^4$ in $(M^6, J)$ with $J|_{Z_1}$ tamed and $[Z_1]\cdot [Z_2]=0$ cannot intersect. Otherwise, the intersection is a $J$-holomorphic subvariety in $Z_1$, by Corollary \ref{eICdim4}, whose homology class is non-trivial since $J|_{Z_1}$ is tamed. 

If a smooth $J$-holomorphic curve in a $4$-dimensional almost complex manifold has negative self-intersection, then there is no other $J$-holomorphic curve in the same homology class. This follows from positivity of intersections. What follows is a generalization of this fact in dimension $6$.
\begin{prop}\label{-int}
If $(Z^4, \omega)$ is a $4$-dimensional symplectic manifold embedded in $(M^6, J)$ as a $J$-holomorphic submanifold such that $J|_Z$ is tamed by $\omega$, and  $c_1(N_M(Z))\cdot [\omega]< 0$, then there is no other almost complex submanifold in $(M, J)$ which is homologous to $Z$.
\end{prop}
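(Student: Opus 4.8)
The plan is to reproduce, in dimension six, the classical four-dimensional phenomenon recalled immediately before the statement: a negatively self-intersecting $J$-holomorphic curve is the unique representative of its class, because a second one would force a nonnegative self-intersection. Here the role of the self-intersection number is played by the quantity $c_1(N_M(Z))\cdot[\omega]$, and the role of positivity of intersections is played by Corollary \ref{eICdim4} together with the positivity of symplectic area of $J$-holomorphic $1$-subvarieties.

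Concretely, I would argue by contradiction. Suppose $Z'\ne Z$ is an almost complex (that is, $J$-holomorphic) submanifold of $(M,J)$ with $[Z']=[Z]$ in $H_4(M,\mathbb{Z})$. Both $Z$ and $Z'$ are compact connected $4$-dimensional $J$-holomorphic submanifolds of the closed $6$-manifold $M$, hence each has codimension $2$. First I would check that $Z\cap Z'$ is neither $Z$ nor $Z'$: since $Z$ and $Z'$ have the same dimension and $Z$ is closed, an inclusion $Z\subseteq Z'$ would exhibit $Z$ as a union of connected components of $Z'$, forcing $Z=Z'$ by connectedness, and symmetrically for $Z'\subseteq Z$. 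Therefore Corollary \ref{eICdim4} applies with $Z_1=Z$ and $Z_2=Z'$ and shows that $Z\cap Z'$ supports a $J$-holomorphic $1$-subvariety $\Theta=\{(C_i,m_i)\}$ inside $Z$.

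Next I would pin down the homology class $e_\Theta\in H_2(Z,\mathbb{Z})$ using Section \ref{homology} (in the closed case, Proposition \ref{closedM}). The class $e_\Theta$ is characterised by $\xi\cdot_Z e_\Theta=(\iota_Z)_*\xi\cdot_M[Z']$ for every $\xi\in H_2(Z,\mathbb{Z})$, where $\iota_Z\colon Z\hookrightarrow M$ is the inclusion. Using $[Z']=[Z]$ and the self-intersection formula $\iota_Z^*PD_M[Z]=c_1(N_M(Z))$, the right-hand side equals $\langle c_1(N_M(Z)),\xi\rangle$, so $e_\Theta=PD_Z(c_1(N_M(Z)))=c_1(N_M(Z))\cap[Z]$. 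Pairing with $[\omega]$ then yields $\int_\Theta\omega=\langle[\omega]\cup c_1(N_M(Z)),[Z]\rangle=c_1(N_M(Z))\cdot[\omega]<0$ by hypothesis.

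Finally I would extract the contradiction from positivity of area. Since the computed value is nonzero, $e_\Theta\ne 0$ and $\Theta$ is a genuine nonempty $J$-holomorphic $1$-subvariety in $Z$. But $J|_Z$ is tamed by $\omega$, so each irreducible component satisfies $\int_{C_i}\omega>0$ and hence $\int_\Theta\omega=\sum_i m_i\int_{C_i}\omega>0$, contradicting $\int_\Theta\omega<0$. Thus no such $Z'$ exists. The step I expect to require the most care is the homological identification $e_\Theta=PD_Z(c_1(N_M(Z)))$: one must make sure the $\omega$-area of the (possibly singular, non-transverse) intersection is computed via a transverse perturbation as in Section \ref{homology}, and that under $[Z']=[Z]$ the restriction $\iota_Z^*PD_M[Z']$ is exactly the Euler class of the normal line bundle $N_M(Z)$. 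Once this is in place, the tameness of $J|_Z$ and Corollary \ref{eICdim4} close the argument precisely as positivity of intersections does in the four-dimensional case.
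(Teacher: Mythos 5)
Your proposal is correct and follows essentially the same route as the paper: argue by contradiction, apply Corollary \ref{eICdim4} to get a $J$-holomorphic $1$-subvariety supported on $Z\cap Z'$, identify its class in $H_2(Z,\mathbb Z)$ as $PD_Z(c_1(N_M(Z)))$ via a transverse perturbation (the paper takes $Z''$ to be a perturbation of $Z$ inside its normal bundle neighborhood, which is the geometric form of your self-intersection formula), and then contradict $c_1(N_M(Z))\cdot[\omega]<0$ using tameness of $J|_Z$. Your explicit checks that $Z\cap Z'$ is neither $Z$ nor $Z'$ and that the empty-intersection case is also excluded are worthwhile refinements of the same argument.
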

\begin{proof}
If there is another such one $Z'$, 
then the homology class of their intersection $Z\cap Z'$ in $H_2(Z, \mathbb Z)$ could be calculated by looking at the intersection of a submanifold $Z''\subset M$ such that $Z\pitchfork Z''$ and $[Z']=[Z'']$. Since all such $Z''$ intersect $Z$ in a manifold with the same homology class in $H_2(Z, \mathbb Z)$, we could choose $Z''$ to be a smooth perturbation of $Z$ in a small neighborhood, which is identified with its normal bundle $N_M(Z)$, such that $Z\pitchfork Z''$. The intersection $V=Z\cap Z''$ is a submanifold of $Z$. 
Hence the homology class of $V$ in $H_2(Z, \mathbb Z)$ is Poincar\'e dual to $c_1(N_M(Z))$. Combining with Corollary \ref{eICdim4}, this implies the intersection $Z\cap Z'$ is a $J$-holomorphic subvariety of $Z$ whose homology class is the Poincar\'e dual of $c_1(N_M(Z))$. Since $J$ is tamed by $\omega$, we have $c_1(N_M(Z))\cdot [\omega]\ge 0$ which contradicts  our assumption.  
\end{proof}

In a non minimal symplectic $4$-manifold $(M, \omega)$, {\it i.e.} when $M$ contains a smooth sphere of self-intersection $-1$, there are infinitely many embedded symplectic spheres in any exceptional class $E$. However, there is at most one smooth $J$-holomorphic curve in class $E$ if we fix an almost complex structure $J$ tamed by $\omega$. This is also true in dimension $6$.  
\begin{definition}\label{budiv}
An almost complex submanifold $D\subset (M^6, J)$ is called a smooth blow-up divisor if it is either $\mathbb CP^2$ or a $\mathbb CP^1$ bundle over a Riemann surface $\Sigma_g$, whose normal line bundle $N_M(D)$ is $\mathcal O(-1)$ along $\mathbb CP^1\subset \mathbb CP^2$ or the fibers of $\mathbb CP^1$ bundles over $\Sigma_g$. It is further called an almost complex blow-up divisor, if each fiber is a smooth $J$-holomorphic curve. 
\end{definition}
Since $D$ is diffeomorphic to $\mathbb CP^2$ or a $\mathbb CP^1$ bundle over $\Sigma_g$, we assume the homology class of $\mathbb CP^1\subset \mathbb CP^2$ or the fibers of the latter case to be $F$.

 Any complex or symplectic divisor arising from a complex or symplectic blow-up is a smooth blow-up divisor.  
We remark that even if there is a symplectic divisor $D$ in a symplectic manifold which is an almost complex blow-up divisor for some tamed $J$, it might not have a symplectic structure on the blown down manifold which is compatible with the almost complex structure. Think about a Moishezon manifold. 

\begin{prop}\label{uniE}
Let an almost complex submanifold $D\subset (M^6, J)$ be a smooth blow-up divisor. Suppose there is an irreducible curve inside $D$ in the class $F$. Then there is no other almost complex submanifold of $M$ in class $[D]$. 
\end{prop}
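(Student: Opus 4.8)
The plan is to argue by contradiction, mirroring the strategy of Proposition \ref{-int}, and to extract the contradiction from positivity of intersections inside the $4$-manifold $D$ itself rather than from a tameness hypothesis. So suppose $D'\subset (M,J)$ were a second almost complex submanifold with $[D']=[D]$ and $D'\neq D$. Since $D$ and $D'$ are both compact connected almost complex submanifolds of real dimension $4$ in $M^6$, neither can properly contain the other, so $D\cap D'$ is neither $D$ nor $D'$. Corollary \ref{eICdim4} then applies (with $Z_1=D$, $Z_2=D'$) and shows that $D\cap D'$ supports a $J|_D$-holomorphic $1$-subvariety $\Theta=\{(C_i,m_i)\}$ inside $D$, where each $C_i$ is an irreducible $J|_D$-holomorphic curve and $m_i\geq 1$.

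Next I would compute the homology class of $\Theta$ in $H_2(D,\mathbb Z)$. Exactly as in the proof of Proposition \ref{-int}, the class of the intersection subvariety is determined by a transverse representative: perturb $D$ inside a tubular neighborhood identified with its normal bundle $N_M(D)$ to obtain $D''$ with $D\pitchfork D''$ and $[D'']=[D]=[D']$. Then $D\cap D''$ is a smooth submanifold of $D$ whose class is Poincar\'e dual in $D$ to $c_1(N_M(D))$, and by the discussion of Section \ref{homology} the $J$-holomorphic $1$-subvariety $\Theta$ carries this same class, namely $e_\Theta=PD_D(c_1(N_M(D)))$.

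Finally I would pair $e_\Theta$ with the fiber class $F$ inside the $4$-manifold $D$. On one hand $e_\Theta\cdot F=\langle c_1(N_M(D)),F\rangle=-1$, since $N_M(D)$ restricts to $\mathcal O(-1)$ on $F$ by Definition \ref{budiv}. On the other hand, let $C$ be the given irreducible $J$-holomorphic curve in class $F$; writing $e_\Theta\cdot F=\sum_i m_i\bigl([C_i]\cdot[C]\bigr)$, every summand is non-negative. Indeed, for $C_i\neq C$ this is positivity of intersections of distinct irreducible $J$-holomorphic curves \cite{MW,McD}, while for $C_i=C$ the term equals $m_i(F\cdot F)\geq 0$ because $F\cdot F\in\{0,1\}$ in both cases of Definition \ref{budiv} (a line in $\mathbb{CP}^2$, or a fiber of a $\mathbb{CP}^1$-bundle). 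Hence $e_\Theta\cdot F\geq 0$, contradicting $e_\Theta\cdot F=-1$, so no such $D'$ exists.

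The two ingredients on which the final inequality rests are where I would be most careful, and they are the substantive content of the argument. First, that the intersection class in $H_2(D)$ is genuinely $PD_D(c_1(N_M(D)))$: this relies on Section \ref{homology} to replace the possibly non-transverse pair $(D,D')$ by the transverse model $(D,D'')$ without changing the homology class. Second, that the self-intersection contribution $m_i(F\cdot F)$ cannot be negative; this is precisely why the hypothesis isolates the fiber class $F$ of a blow-up divisor, and it is the one place where the specific geometry of $D$ (rather than that of a general negative curve) enters. The existence of an \emph{irreducible} curve $C$ in class $F$ is exactly what lets positivity of intersections be invoked against an honest $J$-holomorphic $1$-subvariety rather than a mere homology class, so I expect the only genuine obstacle to be confirming these two non-negativity statements hold as stated.
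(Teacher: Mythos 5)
Your argument is correct and is essentially identical to the paper's proof: both derive the class of the intersection subvariety as $PD_D(c_1(N_M(D)))$ via the transverse perturbation of Proposition \ref{-int}, note that this class pairs to $-1$ with $F$, and contradict this with the fact that $F$ pairs non-negatively with every $J|_D$-holomorphic $1$-subvariety because it is represented by an irreducible curve of non-negative self-intersection. Your explicit split of the pairing into the $C_i\ne C$ terms (positivity of intersections) and the $C_i=C$ term (where $F\cdot F\in\{0,1\}$) just spells out the paper's one-line justification.
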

\begin{proof}
As in the proof of Proposition \ref{-int}, if there is another such divisor $D'$, then the homology class of the intersection $D\cap D'$ is Poincar\'e dual to $c_1(N_M(D))\in H^2(D, \mathbb Z)$. However, since $D$ is a smooth blow-up divisor, $c_1(N_M(D))\cdot F=-1$. There is no $J|_D$-holomorphic subvariety in such a class $c_1(N_M(D))$ since $F$ pairs non-negatively with any $J|_D$-holomorphic subvariety, which is because $F$ is represented by a smooth $J|_D$-holomorphic curve of non-negative self-intersection in $D$.  
This contradiction implies there is no other almost complex submanifold of $M$ in class $[D]$. 
\end{proof}

The argument for Proposition \ref{uniE} works in more general setting, {\it e.g.} we still have uniqueness when $N_M(D)$ is $\mathcal O(-k)$, $k>0$,  along $\mathbb CP^1\subset \mathbb CP^2$ or the fibers of $\mathbb CP^1$ bundles over $\Sigma_g$. Under these assumptions,  it is known that there is a contraction in the complex analytic setting, {\it i.e.} a proper surjective holomorphic mapping $f: M\rightarrow N$ onto a complex analytic variety $N$, such that $f|_D: D\rightarrow B$ is the fibration of the ruled surface where $B$ is either a point or $\Sigma_g$, and $f:M\setminus D\rightarrow N\setminus B$ is an isomorphism.

\end{document}